\newtheorem{theorem}{Theorem}[section]
\newtheorem{lemma}[theorem]{Lemma}
\newtheorem{proposition}[theorem]{Proposition}
\newtheorem{corollary}[theorem]{Corollary}
\newcommand{\ps}[2]{\langle\,#1,#2\,\rangle}
\newcommand{\R}{\mathbb{R}}
\newcommand{\N}{\mathbb{N}}
\newcommand{\C}{\mathbb{C}}
\newcommand{\Z}{\mathbb{Z}}
\newcommand{\cC}{{\mathcal C}}
\newcommand{\cF}{{\mathcal F}}   
\newcommand{\cH}{{\mathcal H}}
\newcommand{\cR}{{\mathcal R}}   
\newcommand{\cS}{{\mathcal S}}
\newcommand{\cW}{{\mathcal W}}
\newcommand{\mR}{{\mathbf R}}
\newcommand{\mK}{{\mathbf K}}
\newcommand{\dist}{\operatorname{dist}}
\newcommand{\supp}{\operatorname{supp}}
\newcommand{\weakto}{\rightharpoonup}
\newcommand{\weak}{\rightharpoonup}
\newcommand{\embed}{\hookrightarrow}
\newcommand{\eps}{\varepsilon}      
\newcommand{\ccW}{{\stackrel{}{{\mbox{\tiny$\cW$}}}}}
\renewcommand{\phi}{\varphi}   
\renewcommand{\Re}{\textrm{Re}}
\renewcommand{\Im}{\textrm{Im}}
\def\eps{\varepsilon}
\def\wh{\widehat}
\begin{document}

\title[Dual variational methods for the nonlinear Helmholtz equation]{Dual variational
methods and nonvanishing for the nonlinear Helmholtz equation}
\author{Gilles Evequoz}
\author{Tobias Weth}
\address{Institut f\"ur Mathematik, Johann Wolfgang Goethe-Universit\"at,
Robert-Mayer-Str. 10, 60054 Frankfurt am Main, Germany}
\email{evequoz@math.uni-frankfurt.de}
\email{weth@math.uni-frankfurt.de}

\begin{abstract}
We set up a dual variational framework to detect real standing wave solutions of the
nonlinear Helmholtz equation
$$
-\Delta u-k^2 u =Q(x)|u|^{p-2}u,\qquad u \in W^{2,p}(\R^N)
$$
with $N\geq 3$, $\frac{2(N+1)}{(N-1)}< p<\frac{2N}{N-2}$ and
nonnegative $Q \in
L^\infty(\R^N)$. We prove the existence of nontrivial solutions for
periodic $Q$ as well as in the case where $Q(x)\to 0$ as
$|x|\to\infty$. In the periodic case, a key ingredient of the approach
is a new nonvanishing
theorem related to an associated integral equation. The solutions we
study are superpositions of outgoing and incoming waves and are
characterized by a nonlinear far field relation. 
\end{abstract}

\keywords{Nonlinear Helmholtz equation, standing waves, dual variational method, nonvanishing.}
\subjclass[2010]{35J20 (primary) 35J05 (secondary)}

\maketitle

\section{Introduction}
\label{sec:introduction}

Due to their importance in various problems in physics, nonlinear stationary Schrödinger
equations of the type
\begin{equation}
\label{eq:27}
- \Delta u + \lambda u = f(x,u),\quad x\in \R^N 
\end{equation}
have been studied extensively since the pioneering works of Berestycki
and Lions \cite{berestycki-lions83,berestycki-lions83b}, Lions
\cite{lions1}, Floer and Weinstein
\cite{floer.weinstein:86}, Ding and Ni \cite{ding.ni:86}
and Rabinowitz \cite{rabinowitz:92} from the 1980s and 1990s. For
superlinear nonlinearities of the form $f(x,u)= r(x,|u|^2)u$, solutions of (\ref{eq:27}) correspond to periodic
solutions of the time-dependent nonlinear  
Schr{\"o}dinger equation 
$$
i \partial_t \psi(t,x) = -\Delta \psi(t,x) 
-f(x,\psi(t,x)),\qquad  (t,x) \in \R \times \R^N.
$$
via the ansatz $\psi(t,x)= e^{ i\lambda
  t}u(x)$. Moreover, for $m \ge 0$ and $\lambda<m$, the ansatz 
$\psi(t,x)=e^{i \sqrt{m-\lambda} t}u(x)$ leads to periodic solutions of 
the 
nonlinear Klein-Gordon equation 
\begin{equation}
\label{eq:29}
\frac{\partial^2 \psi}{\partial t^2}(t,x) - \Delta \psi(t,x) + m \psi(t,x)  =f(x,\psi(t,x)), \qquad (t,x)\in\R\times\R^N.
\end{equation}
In the present paper we are interested in standing wave solutions to
\eqref{eq:29} which arise from real-valued solutions $u$ of \eqref{eq:27}. We note that real-valued solutions 
of \eqref{eq:27} with the decay property $u(x) \to 0$ as $|x| \to \infty$ have
been studied extensively in the case where $\partial_u f(\cdot ,0)
\equiv 0$ on
$\R^N$ and $\lambda \ge 0$, see e.g. \cite{kuzin-pohozaev,struwe,willem} and the references
therein. On the contrary, very little is known in the case
$\lambda<0$, where $0$ is contained in the essential spectrum of the
Schr\"odinger operator $-\Delta +\lambda$. In the present paper we are
interested in this case, which is relevant for the analysis of standing wave solutions of
\eqref{eq:29} with large frequencies according to the ansatz above. In
this case, it is customary to set
$\lambda=-k^2$, and (\ref{eq:27}) is called
nonlinear Helmholtz equation (or nonlinear reduced wave
equation). By restricting our attention to the important class of
power type nonlinearities $f(x,u)= Q(x)|u|^{p-2}u$, we are therefore
led to study real-valued solutions of the problem 
\begin{equation}\label{eqn:1}
-\Delta u -k^2 u =Q(x)|u|^{p-2}u, \qquad u \in W^{2,p}(\R^N).
\end{equation}
One of the very few existence results available for \eqref{eqn:1} is due
to Guti\'errez \cite{gutierrez04}, who studied the special case
$N=3,4$, $p=4$, $Q \equiv \pm 1$. She proved the
 existence of small complex solutions of \eqref{eqn:1} with the
 additional (finiteness) property 
 \begin{equation}
   \label{eq:30}
\sup_{R>1}\frac{1}{R} \int_{B_R}|u|^2\,dx < \infty.   
 \end{equation}
Here and in the following, $B_R \subset \R^N$ denotes the open ball of
radius $R$ centered around the origin. In order to give a more precise
description of the solutions obtained by Guti\'errez, we briefly recall some important facts
on \eqref{eqn:1} in the case $N \ge 3$, $Q \in L^\infty(\R^N)$ and
$\frac{2(N+1)}{(N-1)} \le p \le \frac{2N}{N-2}$. In this case, a (complex-valued) function $u \in W^{2,p}(\R^N)$ solves
\eqref{eqn:1},~\eqref{eq:30} if and only if $u \in L^p(\R^N)$ solves
\begin{equation}
\label{eq:2}
u = \cR\Bigl(Q(x)|u|^{p-2}u\Bigr)+\phi_g \qquad \text{for some
  $\phi_g \in \cH$.} 
\end{equation}
Here $\cR$ is the resolvent operator given by convolution with the
fundamental solution of the linear Helmholtz equation (see
Section~\ref{sec:resolvent} below), and $\cH$ denotes the space of Herglotz wave functions 
$\phi_g: \R^N \to \C$ given as 
$$
\phi_g(x)= \int_{S^{N-1}} e^{i k(x\cdot \xi)} g (\xi)\,d \sigma(\xi) \qquad
\text{for some function $g \in L^2(S^{N-1})$.}
$$
It follows from the dual version of the Stein-Tomas-Theorem (see
Theorem~\ref{sec:resolv-estim-nonv} below) and elliptic estimates that
such functions satisfy $\phi_g \in W^{2,p}(\R^N)$ and solve the linear
Helmholtz equation $(\Delta +k^2) \phi_g = 0$ in the strong sense. For a given solution $u$ of
\eqref{eqn:1},~\eqref{eq:30}, the functions $g$ and $\phi_g$ are uniquely
determined by \eqref{eq:2}, and we will call $\phi_g$ the
{\em Herglotz wave associated to $u$} in the sequel.

In the case $N=3,4$, $p=4$, $Q \equiv \pm 1$, it was proved 
in \cite[Theorem 1]{gutierrez04} that for given small $g\in L^2(S^{N-1})$ the 
problem \eqref{eq:2} admits a unique (complex-valued) solution $u \in W^{2,4}(\R^N)$ 
which is also small in the $L^4$-norm. 
The proof is based on subtle resolvent estimates combined with a
contraction mapping argument. As mentioned in \cite[Page 3]{gutierrez04},
the same argument also gives small real-valued solutions of \eqref{eq:2}, when replacing 
the resolvent operator $\cR$ by its real part and considering real-valued functions 
$g\in L^2(S^{N-1})$ such that $g(-\xi)=g(\xi)$ for all $\xi\in S^{N-1}$.
In the present paper, we focus on a
complementary class of real-valued solutions of \eqref{eqn:1} which
satisfy the integral equation 
\begin{equation}
\label{eq:28}
u = \mR\Bigl(Q(x)|u|^{p-2}u\Bigr), \qquad u \in L^p(\R^N),
\end{equation}
where $\mR$ denotes the real part of the resolvent operator
$\cR$. We shall see that these solutions satisfy \eqref{eq:2} with 
\begin{equation}
  \label{eq:32}
g_u(\xi)= -\frac{i}{4}\Bigl(\frac{k^2}{2\pi}\Bigr)^{\frac{N-2}{2}}\cF(Q|u|^{p-2}u)(k\xi), \quad \xi\in S^{N-1},
\end{equation}
where, here and in the following, $\cF$ denotes the Fourier
transform (see \eqref{eqn:g} below). So the associated Herglotz wave 
$\phi_{g_u}$ is related in a nonlinear way to the solution $u$ itself. 
From resolvent estimates which we
recall in Section~\ref{sec:resolvent}, it easily follows that $u
\equiv 0$ is an isolated solution of \eqref{eq:28} in
$L^p(\R^N)$, and thus nontrivial solutions cannot be found by a
contraction mapping argument as in \cite{gutierrez04}. In this paper,
we set up a variational framework to find nontrivial solutions of this problem. Before stating our main
results, we mention that there is an intimate
relationship between the far field behavior of solutions of
\eqref{eqn:1},~\eqref{eq:30} and their associated Herglotz waves. In
particular, we shall see that if $u \in L^{p}(\R^N)$
solves \eqref{eq:28}, then not only does it solve the problem \eqref{eqn:1}
but it also satisfies the far field relation
\begin{equation}\label{eqn:weak_asympt-r-introduction}
\lim_{R\to\infty}\frac{1}{R}\int\limits_{B_R}\Bigl|u(x)
+2\Bigl(\frac{2\pi}{k|x|}\Bigr)^{\frac{N-1}{2}} \Re \bigl[e^{ik|x|-\frac{i(N-1)\pi}{4}}g_u(\hat x)\bigr]\Bigr|^2\, dx=0,
\end{equation}
with $\hat x= \frac{x}{|x|}$ for $x \in \R^N \setminus \{0\}$ and $g_u$ as
in \eqref{eq:32}. Note that \eqref{eqn:weak_asympt-r-introduction}
implies \eqref{eq:30}. We are now in a position to state our main
results which are related to two different types of weight functions $Q$.

\begin{theorem}
\label{main-theorem-periodic}
Let $N \ge 3$, $\frac{2(N+1)}{N-1} <p< \frac{2N}{N-2}$, and let $Q \in
L^\infty(\R^N)$, $Q \ge 0$, $Q \not \equiv 0$ be $\Z^N$-periodic. Then
problem \eqref{eqn:1}, \eqref{eqn:weak_asympt-r-introduction} admits a
nontrivial strong solution such that $u \in W^{2,q}(\R^N) \cap
\cC^{1,\alpha}(\R^N)$ for all $q \in [p,\infty)$, $\alpha \in (0,1)$. 
\end{theorem}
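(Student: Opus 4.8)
The plan is to translate the integral equation \eqref{eq:28} into a critical point problem for a dual functional of mountain--pass type on $L^{p'}(\R^N)$, $p'=\frac{p}{p-1}$, and to compensate the non-compactness coming from the $\Z^N$--translation invariance by means of the nonvanishing theorem. For $u\in L^p(\R^N)$ set $w=Q^{1/p}u$ and $v=\abs{w}^{p-2}w\in L^{p'}(\R^N)$, so that $w=\abs{v}^{p'-2}v$ and $Q\abs{u}^{p-2}u=Q^{1/p}v$. Since $\mR\colon L^{p'}(\R^N)\to L^p(\R^N)$ is bounded by the resolvent estimates recalled in Section~\ref{sec:resolvent} and has a real symmetric kernel, the operator $\cK v:=Q^{1/p}\mR(Q^{1/p}v)$ is a bounded symmetric map $L^{p'}(\R^N)\to L^p(\R^N)$, and $u$ is a nontrivial solution of \eqref{eq:28} if and only if $v\neq 0$ solves $\abs{v}^{p'-2}v=\cK v$, that is, $v$ is a nontrivial critical point of
\[
J(v)=\tfrac1{p'}\norm{v}_{p'}^{p'}-\tfrac12\int_{\R^N}(\cK v)\,v\,dx,\qquad J\in C^1\bigl(L^{p'}(\R^N),\R\bigr).
\]

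Because $1<p'<2$, the homogeneous term $\tfrac1{p'}\norm{v}_{p'}^{p'}$ dominates the bounded quadratic form $v\mapsto\frac12\int(\cK v)v$ near the origin, so $J$ has a strict local minimum at $0$ with $\inf_{\norm{v}_{p'}=\rho}J>0$ for small $\rho>0$. On the other hand the Fourier symbol of $\mR$ is $\mathrm{p.v.}\,(\abs{\xi}^2-k^2)^{-1}$, which is positive for $\abs{\xi}>k$; using this together with $Q\not\equiv 0$ one constructs $v_1$ with $\int(\cK v_1)v_1>0$, whence $J(tv_1)\to-\infty$ as $t\to+\infty$ since the quadratic term then beats $t^{p'}$. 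Thus $J$ has the mountain--pass geometry, its mountain--pass level $c$ is positive, and there is a Palais--Smale sequence $(v_n)$ at level $c$. Combining $J(v_n)\to c$ with $\langle J'(v_n),v_n\rangle=\norm{v_n}_{p'}^{p'}-\int(\cK v_n)v_n=o(\norm{v_n}_{p'})$ gives $(\tfrac1{p'}-\tfrac12)\norm{v_n}_{p'}^{p'}=c+o(1)+o(\norm{v_n}_{p'})$, so $(v_n)$ is bounded in $L^{p'}(\R^N)$.

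The crucial point is to recover compactness. Since $Q$ is $\Z^N$--periodic and $\mR$ commutes with translations, $J$ is invariant under integer translations, a non-compact symmetry; a priori $(v_n)$ may drift to infinity and converge weakly to $0$, and the classical vanishing lemma of Lions is unavailable because $u_n:=\mR(Q^{1/p}v_n)$, although bounded in $L^p(\R^N)$, carries no global Sobolev bound. Here the nonvanishing theorem enters: from $c>0$ and $J'(v_n)\to 0$ one has $\liminf_n\int(\cK v_n)v_n=\liminf_n\ps{\mR(Q^{1/p}v_n)}{Q^{1/p}v_n}>0$, and the theorem then yields $\alpha>0$ together with points $y_n\in\R^N$ (which, after replacing them by nearest lattice points and slightly enlarging the ball, may be taken in $\Z^N$) such that $\int_{B_1(y_n)}\abs{v_n}^{p'}\,dx\ge\alpha$. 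By the translation invariance of $J$ the shifted sequence $v_n(\cdot+y_n)$ is again Palais--Smale at level $c$, so after relabelling we may assume $v_n\weakto v$ in $L^{p'}(\R^N)$ with $\int_{B_1(0)}\abs{v}^{p'}\ge\alpha$, hence $v\neq 0$. I expect this nonvanishing theorem to be the main obstacle: one must rule out vanishing of a bounded family of densities whose Helmholtz energy stays bounded away from zero without recourse to any compact Sobolev embedding, which forces the use of quantitative restriction/extension (Stein--Tomas type) estimates for $\mR$ in place of the elementary interpolation behind Lions' lemma.

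It remains to check that $v$ is a critical point of $J$ and to conclude. The functions $u_n=\mR(Q^{1/p}v_n)$ solve $-\Delta u_n-k^2u_n=Q^{1/p}v_n$, hence are bounded in $W^{2,p'}_{\mathrm{loc}}(\R^N)$; by the compact Sobolev embedding (valid since $p<\frac{2N}{N-2}$) a subsequence converges in $L^p_{\mathrm{loc}}(\R^N)$, so $Q^{1/p}u_n\to Q^{1/p}u$ in $L^p_{\mathrm{loc}}$. Since $w_n:=\abs{v_n}^{p'-2}v_n=\cK v_n=Q^{1/p}u_n+o(1)$ in $L^p(\R^N)$ and $w\mapsto\abs{w}^{p-2}w$ is continuous from $L^p$ to $L^{p'}$ on bounded sets, it follows that $v_n=\abs{w_n}^{p-2}w_n\to v$ in $L^{p'}_{\mathrm{loc}}(\R^N)$; together with the symmetry of $\cK$ and weak convergence this allows passage to the limit in $\ps{J'(v_n)}{\phi}$ for all $\phi\in C_c^\infty(\R^N)$, so $J'(v)=0$. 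Undoing the substitution, the function $u:=\mR(Q^{1/p}v)$ is nontrivial (because $Q^{1/p}u=\cK v=\abs{v}^{p'-2}v\not\equiv 0$) and solves \eqref{eq:28}, hence \eqref{eqn:1}, and therefore also the far field relation \eqref{eqn:weak_asympt-r-introduction} by the discussion preceding the theorem. Finally, a bootstrap using $Q\abs{u}^{p-2}u\in L^{p'}(\R^N)$, the mapping properties of $\mR$ from Section~\ref{sec:resolvent}, the equation $-\Delta u=k^2u+Q\abs{u}^{p-2}u$, and interior $L^q$ and Schauder estimates upgrades $u$ to $W^{2,q}(\R^N)\cap\cC^{1,\alpha}(\R^N)$ for all $q\in[p,\infty)$ and $\alpha\in(0,1)$.
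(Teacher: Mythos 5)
Your proposal follows essentially the same route as the paper: dual functional $J(v)=\tfrac1{p'}\|v\|_{p'}^{p'}-\tfrac12\int v\,\mK_p v$ on $L^{p'}(\R^N)$, mountain--pass geometry and boundedness of Palais--Smale sequences, the nonvanishing theorem to produce $\Z^N$-translations yielding a nontrivial weak limit, local compactness of the Birman--Schwinger operator (via $W^{2,p'}_{\text{loc}}$ bounds and Rellich) to pass to the limit in $J'$, and a regularity bootstrap through the integral representation $u=\mR(Q^{1/p}v)$. The one place you argue differently is in showing $J$ is unbounded below along a ray: you appeal to positivity of the Fourier symbol $(|\xi|^2-k^2)^{-1}$ for $|\xi|>k$, but since multiplication by $Q^{1/p}$ spreads Fourier support, controlling $\widehat{Q^{1/p}v_1}$ for a general nonnegative $Q\in L^\infty$ needs care; the paper sidesteps this by using the physical-space positivity of $\Psi=\Re\Phi$ near the origin together with a density point of $\{Q>0\}$, which is more robust for arbitrary $Q$, though your strategy can certainly be made to work with additional justification.
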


\begin{theorem}
\label{main-theorem-compact}
Let $N \ge 3$, $\frac{2(N+1)}{N-1} \leq p< \frac{2N}{N-2}$, and let $Q \in
L^\infty(\R^N)$, $Q \ge 0$, $Q \not \equiv 0$ satisfy $\lim
\limits_{|x| \to \infty}Q(x)=0$. Then
problem \eqref{eqn:1}, \eqref{eqn:weak_asympt-r-introduction} admits a
sequence of pairs $\pm u_n$ of solutions such that $u_n \in W^{2,q}(\R^N) \cap
\cC^{1,\alpha}(\R^N)$ for all $q \in [p,\infty)$, $\alpha \in (0,1)$,
and 
\begin{equation}
  \label{eq:26}
\|u_n\|_{L^p(\R^N)} \to \infty \qquad \text{as $n \to \infty$.}  
\end{equation}
\end{theorem}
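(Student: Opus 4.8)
The plan is to recast the integral equation \eqref{eq:28} as the Euler--Lagrange equation of an even functional on $L^{p'}(\R^N)$, where $p'=\frac{p}{p-1}\in(1,2)$, and to produce the pairs $\pm u_n$ by a symmetric minimax argument, the hypothesis $Q(x)\to0$ entering only to guarantee compactness. To set up the dual framework, for $u\in L^p(\R^N)$ put $v:=Q^{1/p'}\abs u^{p-2}u$; since $(p-1)(p'-1)=1$ one has $v\in L^{p'}(\R^N)$, $u=Q^{-1/p}\abs v^{p'-2}v$ (read as $0$ where $Q=0$), and \eqref{eq:28} becomes $\abs v^{p'-2}v=\cK v$ with $\cK v:=Q^{1/p}\mR(Q^{1/p}v)$. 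By the resolvent estimates of Section~\ref{sec:resolvent}, $\mR:L^{p'}(\R^N)\to L^p(\R^N)$ is bounded, so $\cK$ is bounded from $L^{p'}(\R^N)$ to $L^p(\R^N)$ and self-adjoint for the real $L^2$-pairing (the kernel of $\mR$ is real and even); hence
\[
J(v):=\frac1{p'}\norm v_{L^{p'}}^{p'}-\frac12\int_{\R^N}v\,\cK v\,dx
\]
is an even $C^1$-functional with $J(0)=0$ whose nonzero critical points are exactly the nontrivial solutions of \eqref{eq:28}. Such a $v$ gives $u=Q^{-1/p}\abs v^{p'-2}v\in L^p(\R^N)$, which by the discussion in the introduction is a strong solution of \eqref{eqn:1} satisfying the far field relation \eqref{eqn:weak_asympt-r-introduction}, with the regularity $u\in W^{2,q}(\R^N)\cap\cC^{1,\alpha}(\R^N)$ for all $q\in[p,\infty)$, $\alpha\in(0,1)$ following from the mapping properties of $\mR$, an $L^q$-bootstrap and elliptic regularity; moreover $\int_{\R^N}Q\abs u^p\,dx=\norm v_{L^{p'}}^{p'}$.

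The variational geometry is governed by $p'<2$. Near the origin $J(v)\ge\frac1{p'}\norm v_{L^{p'}}^{p'}-C\norm v_{L^{p'}}^2$, so there are $\rho,\alpha>0$ with $J\ge\alpha$ on the whole sphere $\norm v_{L^{p'}}=\rho$; in particular $0$ is a local minimum with value $0$. For the ``downhill'' directions I would use a positivity property of $\mR$: for every $n$ there is an $n$-dimensional subspace $V_n\subset L^{p'}(\R^N)$ on which $v\mapsto\int_{\R^N}v\,\cK v\,dx$ is positive definite. Since $Q\not\equiv0$, fix a ball where $Q\ge\eps_0>0$ and $\eta\in\cC_c^\infty$ supported there; for a lacunary sequence $\kappa_1\ll\kappa_2\ll\cdots$ the functions $v_j:=Q^{-1/p}\cos(\kappa_jx_1)\eta$ lie in $L^{p'}(\R^N)$ and $\widehat{Q^{1/p}v_j}=\widehat{\cos(\kappa_jx_1)\eta}$ concentrates at frequency $\abs\xi\approx\kappa_j\gg k$, where the Fourier symbol $\mathrm{p.v.}(\abs\xi^2-k^2)^{-1}$ of $\mR$ is positive, so $\int v_j\,\cK v_j\,dx\approx c\,\kappa_j^{-2}>0$ with negligible cross terms; then $V_n=\spann\{v_1,\dots,v_n\}$ works. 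Because $p'<2$, positive definiteness of the quadratic form on $V_n$ forces $J\le0$ on $V_n$ off a large ball, giving the symmetric mountain pass geometry in every dimension.

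The decay of $Q$ enters through the compactness of $\cK:L^{p'}(\R^N)\to L^p(\R^N)$, which I regard as the main point. Given $v_n\weakto0$ in $L^{p'}(\R^N)$, set $w_n:=\mR(Q^{1/p}v_n)$; on $\R^N\setminus B_R$, $\norm{\cK v_n}_{L^p(\R^N\setminus B_R)}\le\norm{Q^{1/p}}_{L^\infty(\R^N\setminus B_R)}\norm{w_n}_{L^p(\R^N)}$ is uniformly small as $R\to\infty$ since $Q(x)\to0$, while on $B_R$ the equation $(\Delta+k^2)w_n=-Q^{1/p}v_n$, interior $L^{p'}$-estimates and the uniform bound on $\norm{w_n}_{L^p(\R^N)}$ bound $\norm{w_n}_{W^{2,p'}(B_R)}$ uniformly; as $p<\frac{2N}{N-2}$, the embedding $W^{2,p'}(B_R)\embed L^p(B_R)$ is compact, so $w_n\to0$ in $L^p(B_R)$, and altogether $\cK v_n\to0$ in $L^p(\R^N)$. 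Consequently $J$ satisfies $(PS)_c$ for all $c$: comparing $J(v_n)$ with $\frac1{p'}J'(v_n)v_n$ and using $p'<2$ bounds any Palais--Smale sequence in $L^{p'}(\R^N)$; along a subsequence $v_n\weakto v$, compactness of $\cK$ gives $\abs{v_n}^{p'-2}v_n=\cK v_n+o(1)\to\cK v$ in $L^p(\R^N)$, whence $v_n\to v$ strongly (the inverse of the duality map $w\mapsto\abs w^{p'-2}w$ being continuous from $L^p(\R^N)$ onto $L^{p'}(\R^N)$) and $\abs v^{p'-2}v=\cK v$.

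By a symmetric mountain pass / $\Z_2$-minimax theorem, an even $C^1$-functional with $J(0)=0$, this geometry and the $(PS)$ condition has an unbounded sequence of critical values $c_n\to\infty$, with critical points $\pm v_n$, $v_n\ne0$. At a critical point $\norm v_{L^{p'}}^{p'}=\int_{\R^N}v\,\cK v\,dx$, so $J(v)=(\tfrac1{p'}-\tfrac12)\norm v_{L^{p'}}^{p'}$ with $\tfrac1{p'}-\tfrac12>0$, forcing $\norm{v_n}_{L^{p'}}\to\infty$; hence the real solutions $u_n=Q^{-1/p}\abs{v_n}^{p'-2}v_n$ of \eqref{eq:28} --- and thus of \eqref{eqn:1}, with the far field behaviour and regularity above --- come in pairs $\pm u_n$ and satisfy $\norm Q_{L^\infty}\norm{u_n}_{L^p}^p\ge\int_{\R^N}Q\abs{u_n}^p\,dx=\norm{v_n}_{L^{p'}}^{p'}\to\infty$, which is \eqref{eq:26}. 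The hardest parts are the compactness of $\cK$, where the full strength of $Q(x)\to0$ and the sharp bound $p<\frac{2N}{N-2}$ (for the compact Sobolev embedding) are used, and making the high-dimensional mountain pass geometry rigorous --- i.e.\ verifying that the real part of the Helmholtz resolvent is positive definite on arbitrarily large subspaces compatible with the weight $Q$.
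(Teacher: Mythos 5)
Your overall architecture matches the paper's: dualize via $v=Q^{1/p'}|u|^{p-2}u$, work with $J(v)=\tfrac1{p'}\|v\|_{p'}^{p'}-\tfrac12\int v\,\cK v$, prove that the Birman--Schwinger operator $\cK=\mK_p$ is compact when $Q(x)\to 0$ (so $J$ satisfies Palais--Smale), exhibit finite-dimensional subspaces on which the quadratic part is positive definite (so the sublevel sets have the right topology), and then invoke a symmetric mountain-pass theorem. The compactness argument, the $(PS)$ argument, and the passage from critical points of $J$ to solutions of \eqref{eqn:1} with the required regularity and far-field behaviour are essentially the paper's.

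The one genuinely different (and, as written, defective) step is your construction of the $n$-dimensional subspaces $V_n$. You pick $v_j=Q^{-1/p}\cos(\kappa_jx_1)\eta$ with $\eta$ supported in ``a ball where $Q\ge\eps_0>0$'' and argue via the Fourier symbol of $\mR$ that $\int v_j\,\cK v_j>0$ for large $\kappa_j$. This is a frequency-side argument, in contrast to the paper's spatial one. However, under the hypotheses $Q\in L^\infty$, $Q\ge 0$, $Q\not\equiv 0$, $Q(x)\to 0$, there is in general \emph{no} ball on which $Q$ is bounded below by a positive constant: $Q$ could, for example, be the indicator of a nowhere dense set of positive measure. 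Your $Q^{-1/p}$ factor then fails to produce an $L^{p'}$ function, and without it $Q^{1/p}v_j=Q^{1/p}\eta\cos(\kappa_jx_1)$ has a Fourier transform whose decay you do not control (since $Q^{1/p}\eta$ is only $L^\infty$ with compact support), so you cannot rule out significant spectral mass near the singular sphere $|\xi|=k$ where the symbol $\mathrm{p.v.}\,(|\xi|^2-k^2)^{-1}$ changes sign. The paper avoids this entirely by working on the spatial side (Lemma~\ref{lem:MP_hyp-1}): it picks a Lebesgue density point of $\{Q>0\}$, places $m$ small, mutually well-separated balls $B^1,\dots,B^m$ near it, and uses that $\Psi=\Re\,\Phi$ is strictly positive and large near the origin while bounded away from it (cf.\ \eqref{eq:3}), so that the diagonal terms $\Psi^*(\tau)\,a_i^2(\int_{B^i}Q^{1/p}z_i)^2$ dominate the cross terms $\Psi_*(m\tau)$. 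That argument uses only $|\{Q>0\}\cap B^i|>0$, which is what the density-point choice guarantees, and thus covers all admissible $Q$. Your frequency construction is an interesting alternative but, to make it work, you would need an additional hypothesis (e.g.\ $Q$ continuous and positive somewhere, or $\operatorname*{ess\,inf}_{B}Q>0$ on some ball), or a genuinely new argument to control the near-sphere contribution; as it stands this step does not close.

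A minor remark: your derivation of the Palais--Smale boundedness via $J(v_n)-\tfrac12 J'(v_n)v_n=(\tfrac1{p'}-\tfrac12)\|v_n\|_{p'}^{p'}$ and your use of the continuity of the inverse duality map $L^p\to L^{p'}$ to upgrade weak to strong convergence are correct and slightly more direct than the paper's convexity argument; both are fine.
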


Up to our knowledge, these results are the first existence results for
problem \eqref{eqn:1},~\eqref{eqn:weak_asympt-r-introduction} under
the given assumptions on $Q$. The main difficulty of the problem is
the lack of a direct variational approach, since the energy functional
formally associated to \eqref{eqn:1} is not well defined
on $W^{2,p}(\R^N)$, and it is not even well defined on nontrivial solutions
of \eqref{eqn:1}, \eqref{eqn:weak_asympt-r-introduction}. 
In a previous paper by the authors \cite{evequoz-weth14}, the
case of compactly supported $Q$ (and a more general class of superlinear
nonlinearities $f$ compactly supported in space) has been studied with
a variational reduction method. More precisely, in
\cite{evequoz-weth14} we used a Dirichlet-to-Neumann map associated to the exterior problem for the linear
Helmholtz equation to reduce the problem to the existence of nontrivial critical points of an energy functional in $H^1(B_R(0))$ 
for some $R>0$. We then used linking arguments to get existence results. The method of \cite{evequoz-weth14} is obviously
restricted to (spatially) compactly supported nonlinearities and
therefore cannot be used to derive the results of the present
paper. 
On the other hand, in the case of
compactly supported $Q$, multiple existence of solutions can be shown
for any $N \ge 1$ and for a larger range of exponents, namely for $p>2$ with $p<\frac{2N}{N-2}$ if $N \ge 3$, see
\cite{evequoz-weth14}. 

It is natural to ask whether a stronger, pointwise version
of the far field relation \eqref{eqn:weak_asympt-r-introduction} in the form
\begin{equation}
  \label{eq:35}
 u(x)= -2\Bigl(\frac{2\pi}{k|x|}\Bigr)^{\frac{N-1}{2}} \Re \bigl[e^{ik|x|-\frac{i(N-1)\pi}{4}}g_u(\hat x)\bigr]
+o(|x|^{\frac{1-N}{2}}) \qquad \text{as $|x| \to \infty$.}  
\end{equation}
is
available for the solutions given by Theorems~\ref{main-theorem-periodic} and
\ref{main-theorem-compact} above. Related to this question, we have
the following result. 

\begin{theorem}
\label{sec:introduction-4}
Let $N=3$, $4<p\leq 6$ or $N=4$, $\frac{11}{3}<p\leq 4$, and let $Q \in
L^\infty(\R^N)$. Then every solution $u \in L^p(\R^N)$ of
\eqref{eq:28} satisfies \eqref{eq:35}. In particular, $u$ has
pointwise decay given by $|u(x)| = O(|x|^{\frac{1-N}{2}})$ as $|x| \to \infty$. 
\end{theorem}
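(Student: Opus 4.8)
The plan is to read off the pointwise far field relation \eqref{eq:35} directly from the representation $u=\mR(Q|u|^{p-2}u)$, once it is known that $u$ decays pointwise like $|x|^{(1-N)/2}$. The hypotheses on $(N,p)$ are exactly those making the nonlinearity an $L^1$-function as soon as this decay holds: if $f:=Q|u|^{p-2}u$ and $|u(x)|\lesssim|x|^{(1-N)/2}$, then $|f(x)|\lesssim|x|^{-(N-1)(p-1)/2}$, and $|x|^{-(N-1)(p-1)/2}\in L^1(\R^N)$ iff $\tfrac{(N-1)(p-1)}2>N$, i.e. $p>\tfrac{3N-1}{N-1}=1+\tfrac{2N}{N-1}$; this is $4$ for $N=3$ and $\tfrac{11}3$ for $N=4$, and for $N\ge5$ it exceeds $\tfrac{2N}{N-2}$, which is why the statement is confined to $N=3,4$.

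The first task is the decay bound $|u(x)|=O(|x|^{(1-N)/2})$. I would begin with a regularity bootstrap: from $u\in L^p(\R^N)$, the resolvent estimates of Section~\ref{sec:resolvent} and elliptic regularity give $u\in W^{2,q}(\R^N)\cap\cC^{1,\alpha}(\R^N)$ for every $q\in[p,\infty)$, $\alpha\in(0,1)$, so $u$ and $\nabla u$ are bounded, $u(x)\to0$, and $f=Q|u|^{p-2}u\in L^{p'}(\R^N)\cap L^\infty(\R^N)$ with $f(x)\to0$. Using that $p'$ lies strictly below the Stein--Tomas exponent $\tfrac{2(N+1)}{N+3}$, the resolvent estimates also yield a first algebraic decay $|u(x)|\lesssim\langle x\rangle^{-\gamma_0}$ for some $\gamma_0>0$. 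One then iterates: if $|u(x)|\lesssim\langle x\rangle^{-\gamma}$ then $|f(x)|\lesssim\langle x\rangle^{-(p-1)\gamma}$, and splitting $u(x)=\Re\int_{\R^N}\Phi(x-y)f(y)\,dy$ ($\Phi$ the outgoing fundamental solution) into the regions $\{|y|<|x|/2\}$, a near-diagonal annulus $\{|x|/2\le|y|\le2|x|\}$, and $\{|y|>2|x|\}$ — controlling the local singularity $|\Phi(z)|\lesssim|z|^{2-N}$ via the embedding $W^{2,s}\hookrightarrow L^\infty$, $s>N/2$, which is available because $u\in L^q$ for all $q$ — produces $|u(x)|\lesssim\langle x\rangle^{-\gamma'}$ with $\gamma'=\min\{\tfrac{N-1}2,\,(p-1)\gamma-c\}$ for a dimensional constant $c$. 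Since $p-1>1$ this strictly improves the rate while it is below the universal ceiling $\tfrac{N-1}2$, and as $p>\tfrac{3N-1}{N-1}$ guarantees $\tfrac N{p-1}<\tfrac{N-1}2$, finitely many steps push the rate past $\tfrac N{p-1}$, giving $|u(x)|=O(|x|^{(1-N)/2})$.

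With the decay in hand, $|f(x)|\lesssim\langle x\rangle^{-(N-1)(p-1)/2}$ with $(N-1)(p-1)/2>N$, so $\langle x\rangle^\delta f\in L^1(\R^N)$ for some small $\delta>0$; in particular $\cF(f)$ is continuous on $\R^N$, so $g_u$ from \eqref{eq:32} is a continuous function on $S^{N-1}$ and the leading term of \eqref{eq:35} is genuinely a function of $\hat x$. To extract it, substitute the large-argument Hankel asymptotics $\Phi(z)=c_N|z|^{(1-N)/2}e^{ik|z|-i(N-1)\pi/4}+O(|z|^{-(N+1)/2})$ into $\cR(f)(x)=\int_{\R^N}\Phi(x-y)f(y)\,dy$. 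On $\{|y|\le|x|^{1/3}\}$ the expansion $|x-y|=|x|-\hat x\cdot y+O(|y|^2/|x|)$ and dominated convergence ($f\in L^1$) yield $c_N|x|^{(1-N)/2}e^{ik|x|-i(N-1)\pi/4}\cF(f)(k\hat x)$ up to $o(|x|^{(1-N)/2})$; on $\{|x|^{1/3}<|y|\le|x|/2\}$ the bound $|\Phi(x-y)|\lesssim|x|^{(1-N)/2}$ together with $\int_{|y|>|x|^{1/3}}|f|\to0$ gives $o(|x|^{(1-N)/2})$; and on $\{|y|>|x|/2\}$ the decay $|f(y)|\lesssim\langle y\rangle^{-(N-1)(p-1)/2}$ with exponent $>N$ makes the remaining integral $o(|x|^{(1-N)/2})$, the spare weight $\delta$ upgrading the natural $O$ to $o$. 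Taking real parts and inserting \eqref{eq:32} gives \eqref{eq:35}; the pointwise decay $|u(x)|=O(|x|^{(1-N)/2})$ is then immediate.

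The main obstacle is the decay bootstrap of the second paragraph: obtaining the first algebraic rate $\gamma_0$ genuinely requires the sharp resolvent estimates of Section~\ref{sec:resolvent} (the bare facts $f\in L^{p'}\cap L^\infty$, $f(x)\to0$ do not suffice), and the far-field region $\{|y|>2|x|\}$ must be handled by exploiting the oscillation of $\Phi$ when the current decay rate is small; it is precisely the numerology $p>\tfrac{3N-1}{N-1}$ that makes the iteration terminate at a useful rate and, simultaneously, renders $f$ integrable.
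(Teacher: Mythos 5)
Your overall architecture matches the paper's: first establish $|u(x)|=O(|x|^{\frac{1-N}{2}})$, then read off \eqref{eq:35} from the Hankel asymptotics of $\Phi$ by a three-region split of the convolution; the final step is essentially Proposition~\ref{prop:farfield_N} and is sound. The gap is in how you propose to obtain the pointwise decay, and it is a genuine one, not merely a detail left to the reader.

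The difficulty is twofold. First, you never actually produce the initial algebraic rate $\gamma_0>0$. The resolvent estimate \eqref{eq:1} and the fact that $p'<\frac{2(N+1)}{N+3}$ are $L^{p'}\to L^p$ statements; combined with $u\in\cC^{1,\alpha}(\R^N)$ they do give $u(x)\to 0$, but they give no quantitative rate — a uniformly $\cC^{1,\alpha}$ function in $L^p(\R^N)$ need not decay faster than any prescribed rate (consider bumps of height $n^{-\eps}$ centered at exponentially spaced points). Second, even granting some small $\gamma_0>0$, your iteration $\gamma\mapsto\gamma'=(p-1)\gamma-\tfrac{N+1}{2}$ (the constant $c=\tfrac{N+1}{2}$ comes out of all three regions when $(p-1)\gamma<N$) has a \emph{repelling} fixed point at $\gamma^*=\tfrac{N+1}{2(p-2)}$: it improves the rate only if $\gamma_0>\gamma^*$, and worsens it otherwise. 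Under the hypotheses one does have $\gamma^*<\tfrac{N}{p-1}<\tfrac{N-1}{2}$, so if one could start above $\gamma^*$ the iteration would terminate, but $\gamma^*\to\tfrac{N-1}{2}$ as $p\downarrow\tfrac{3N-1}{N-1}$, so for $p$ near the lower endpoint one would have to begin essentially at the final rate. Your acknowledgement that the far region ``must be handled by exploiting the oscillation of $\Phi$'' is precisely the missing argument, and it is not supplied.

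The paper avoids both problems by working in Lebesgue exponent rather than decay exponent. It first shows $u\in L^{p-1}(\R^N)$ (equivalently $f:=Q|u|^{p-2}u\in L^1(\R^N)$) by a bootstrap in $L^s$-spaces using the \emph{off-duality} resolvent bounds of Guti\'errez \cite[Theorem 6]{gutierrez04}: from $u\in L^s$ one gets $f\in L^{s/(p-1)}$, and the mapping $L^t\to L^q$ for pairs satisfying \eqref{eq:31-gutierrez-pair-condition} lowers the exponent of $u$ by a fixed amount $\delta>0$ each step, until $u\in L^{p-1}$. This is a genuine contraction, with no delicate fixed-point threshold. It then invokes Lemma~\ref{lem:asympt_fct}, a Zemach--Odeh-type argument that does \emph{not} iterate in the decay rate: it writes $u=\sum_k u_k$ with each $u_k$ supported (in the $V$-variable) outside a large ball $B_R$, chooses $R$ so large that the associated integral operator is a contraction in the weighted sup-norm $\sup_{|x|\ge R}|x|^{\frac{N-1}{2}}|\,\cdot\,|$, and observes that the initial term $u_0$ already has the full rate $|u_0(x)|\lesssim|x|^{\frac{1-N}{2}}$ because $Vu=f\in L^1$. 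The hypotheses $V=Q|u|^{p-2}\in L^{\frac{p-1}{p-2}}\cap L^\infty$ with $\tfrac{p-1}{p-2}<\tfrac{2N}{N+1}$ and $f\in L^1\cap L^\infty$, which are exactly what the $L^{p-1}$-integrability and the regularity bootstrap furnish, and which in turn force the restriction $p>\tfrac{3N-1}{N-1}$, make the lemma applicable. So the numerology you identify is right, but the mechanism that uses it is quite different: integrability improvement plus a one-shot Neumann-series decay lemma, rather than a bootstrap in the pointwise decay exponent.
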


Note that, for $N=3$, the full range of admissible exponents in
Theorem~\ref{main-theorem-periodic} is covered by
Theorem~\ref{sec:introduction-4}, and thus \eqref{eq:35} holds for the
solution detected in this Theorem. Moreover, for $N=3$, \eqref{eq:35}
also holds for the
solutions detected in Theorem~\ref{main-theorem-compact} unless $p=4$
in which case the question is open. 
The first step in the proof of Theorem~\ref{sec:introduction-4} is to show that
$u \in L^{p-1}(\R^N)$, which implies that the RHS of (\ref{eqn:1}) is in $L^1(\R^N)$. This is done by a bootstrap argument, using
$L^p - L^q$ mapping properties of the resolvent due to Guti\'errez \cite[Theorem 6]{gutierrez04}
for exponents $p$ and $q$ lying off the line of duality $p=q'$.
It is open whether the restrictions
on $N$ and $p$ in Theorem~\ref{sec:introduction-4} are necessary, but
we note that for $p<\frac{3N-1}{N-1}$ the property \eqref{eq:35}
implies in general that $u \not \in L^{p-1}(\R^N)$.

Let us now briefly explain our approach and the organization of the
paper. In Section~\ref{sec:resolvent}, we first recall important estimates
and characterizations related to
the linear (homogeneous and inhomogeneous) Helmholtz equation due to
Kenig, Ruiz and Sogge \cite{KRS87}, Guti\'errez \cite{gutierrez04} and 
Agmon \cite{agmon90,agmon99}. In particular, we recall some mapping
properties of the resolvent $\cR$ with respect to Lebesgue spaces.
Moreover, we derive far field asymptotics
within the linear inhomogeneous setting. In
Section~\ref{sec:nonv-prop}, we then derive a nonvanishing property
related to the resolvent which is a key ingredient in the proof of
Theorem~\ref{main-theorem-periodic}. With the help of this
nonvanishing property, the problem of lack of compactness of the periodic case
will be overcome. The nonvanishing property can be seen as an
analogue of Lions' local compactness Lemma (see e.g. \cite{lions1} or \cite[Lemma
1.21]{willem}), and it relies, in particular, on a combination of arguments as in
\cite{alama-li92} with an asymptotic multiplier estimate inspired by \cite{gutierrez04}. 
In Section~\ref{sec:dual} we set up a dual variational framework for
problem \eqref{eqn:1},~\eqref{eqn:weak_asympt-r-introduction} which
relies on the corresponding integral equation~\eqref{eq:28}. More
precisely, we define an energy functional on $L^{p'}(\R^N)$
such that \eqref{eq:28} is reformulated as the corresponding Euler-Lagrange
equation for $v:= Q^{\frac{1}{p'}}|u|^{p-2}u$. Here $p':= \frac{p}{p-1}$ denotes the
conjugate exponent of $p$. This approach is inspired, in particular, by \cite{alama-li92,jeanjean94}.
We also complete the proof of Theorem~\ref{sec:introduction-4} in this
section. In Section \ref{sec:compact} we then consider the case where
the coefficient $Q$ satisfies $Q(x) \to 0$ as $|x| \to \infty$, and
we show that in this case the dual energy functional satisfies the
Palais-Smale condition. We then apply a variant of the symmetric
mountain-pass lemma in order to complete the proof of
Theorem~\ref{main-theorem-compact}. In Section~\ref{sec:dual_periodic}
we consider the case of periodic $Q$. In this case, the dual energy functional
also has the mountain-pass geometry, but it does not satisfy the
Palais-Smale condition anymore. Nevertheless, the existence of a
bounded Palais-Smale sequence can be shown, and it remains to show
that a subsequence converges strongly in $L^{p'}_{\text{loc}}(\R^N)$ to a
nontrivial critical point of the functional. By this we complete the
proof of Theorem~\ref{main-theorem-periodic}. Finally, in Appendix 
\ref{sec:appendix}, we add a result, based on standard elliptic
estimates, on the Sobolev regularity imposed by the
resolvent operator $\cR$. 

We close this introduction by fixing some notation. Throughout the
paper, we let $B_R(x)$ denote the open ball of radius $R$ centered at
$x$, and we also set $B_R:=B_R(0)$ and $M_R= \R^N \setminus B_R$. 
As already mentioned, we put $\wh{x} = \frac{x}{|x|}$ for $x \in \R^N
\setminus \{0\}$. The symbols $\cS$ and $\cS'$ respectively denote the Schwartz space
and the space of tempered distributions on $\R^N$. For
$f \in \cS'$, we write $\cF(f)$ or $\widehat f$ to denote the Fourier
transform of $f$. Moreover, for matters of simplicity, we sometimes write $\|\cdot\|_s$ instead
of $\|\cdot\|_{L^s(\R^N)}$ for $s \in [1,\infty]$.

Throughout the remainder of the paper, we restrict our attention to
the case $k=1$. This leads to less complicated formulas in the
derivations, and the general case follows from the scaling properties
of \eqref{eqn:1} and the linear (homogeneous and inhomogeneous)
Helmholtz equation.

\section{Resolvent estimates and far field asymptotics}\label{sec:resolvent}
Throughout this section, we regard all function spaces as spaces of
complex-valued functions. Let $\eps>0$. Then the operator $-\Delta-(1+i\eps)$: $H^2(\R^N)\subset L^2(\R^N)$
$\to$ $L^2(\R^N)$ is an isomorphism. Moreover, for any $f$ from the Schwartz space $\cS$ its inverse is given by
$$
\cR_\eps f(x):=[-\Delta-(1+i\eps)]^{-1}f(x) = (2\pi)^{-\frac{N}{2}}
\int_{\R^N}e^{ix\cdot \xi}\frac{\wh{f}(\xi)}{|\xi|^2-(1+i\eps)}\, d\xi.
$$
It is well known (see e.g. \cite{gelfand}) that there exists a linear
operator $\cR : \cS \to \cS'$ given by    
$$
\langle \cR f, g \rangle := \lim_{\eps \to 0} \int_{\R^N} [\cR_\eps
f](x) g(x)\,dx =  \int_{\R^N} [\Phi * f](x) g(x)\,dx \qquad \text{for $f,g \in \cS$}
$$
with 
\begin{equation}\label{eqn:fund_sol}
\Phi(x):= (2\pi)^{-\frac{N}{2}} \cF^{-1}((|\xi|^2-1-i0)^{-1})(x)
=\frac{i}{4} (2\pi |x|)^{\frac{2-N}{2}}H^{(1)}_{\frac{N-2}{2}}(|x|)
\end{equation}
for $x \in \R^N \setminus \{0\}$, where $H^{(1)}_{\frac{N-2}{2}}$ is the Hankel function of the first kind of order $\frac{N-2}{2}$. 
Here we use the notation from \cite{gelfand}, which also allows us to briefly write 
$$
\cR f:=\cF^{-1}\left((|\xi|^2-1-i0)^{-1}\wh{f}\right)\qquad \text{for $f \in \cS$.}
$$
For $H^{(1)}_{\frac{N-2}{2}}$ we have the asymptotic expansions 
\begin{equation}
  \label{eq:3}
H^{(1)}_{\frac{N-2}{2}}(s) =\left \{
  \begin{aligned}
&\sqrt{\frac{2}{\pi s}}\,
  e^{i(s-\frac{N-1}{4}\pi)}[1+O(s^{-1})] &&\qquad \text{as $s \to
    \infty$,}\\
&-\frac{i\, \Gamma(\frac{N-2}{2})}{\pi}
\Bigl(\frac{2}{s}\Bigr)^{\frac{N-2}{2}}[1+O(s)]  &&\qquad \text{as $s \to 0^+$}
  \end{aligned}
\right.
\end{equation}
(see e.g. \cite[Formulas (5.16.3)]{lebedev}), so there exists a constant $C_0>0$ such that 
\begin{equation}
  \label{eq:4}
  |\Phi(x)| \le C_0 \max \{|x|^{2-N},|x|^{\frac{1-N}{2}}\} \qquad
  \text{for $x \in \R^N \setminus \{0\}$.}   
\end{equation}
Moreover, $\Phi$ satisfies the equation $-\Delta \Phi - \Phi = \delta$
together with Sommerfeld's outgoing 
radiation condition 
\begin{equation}
  \label{eq:24}
|\nabla \Phi(x)- i \Phi(x)\hat x| = o(|x|^{\frac{1-N}{2}}) \qquad
\text{as $|x| \to \infty$.}   
\end{equation}
As a consequence, for $f \in \cS$, the function 
$u=\cR f \in \cC^\infty(\R^N)$ is a solution of the inhomogeneous
Helmholtz equation $-\Delta u - u=f$ satisfying $|u(x)|=
O(|x|^{\frac{1-N}{2}})$ as $|x| \to \infty$ and the outgoing radiation
condition (\ref{eq:24}) with $u$ in place of $\Phi$. We also have the
following important estimates. 

\begin{theorem} {(Special case of \cite[Theorem 2.3]{KRS87} by Kenig, Ruiz and Sogge)}\label{thm:KRS}\\
 Let $\frac{2(N+1)}{N-1}\leq p\leq\frac{2N}{N-2}$.
 Then there exists a constant $C>0$ such that
 \begin{equation}
\label{eq:1}
  \|\cR f\|_{L^p(\R^N)}\leq C \|f\|_{L^{p'}(\R^N)} \quad\text{for all }f\in\cS.
 \end{equation}
\end{theorem}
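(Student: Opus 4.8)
The statement is the Kenig--Ruiz--Sogge uniform resolvent estimate, and the plan is to reconstruct its proof by reducing \eqref{eq:1} to a Fourier multiplier bound and then decomposing the multiplier according to whether the frequency lies near the unit sphere $S^{N-1}$. Since $\cR f = \cF^{-1}\bigl((|\xi|^2-1-i0)^{-1}\widehat f\,\bigr)$ for $f\in\cS$, it suffices to bound, from $L^{p'}(\R^N)$ to $L^p(\R^N)$, the operator with symbol $m(\xi)=(|\xi|^2-1-i0)^{-1}$. I would fix $\psi\in\cC_c^\infty(\R^N)$ with $\psi\equiv 1$ near $S^{N-1}$ and $\supp\psi\subset\{\tfrac12<|\xi|<2\}$, and split $m=m_{\mathrm{reg}}+m_{\mathrm{sing}}$ with $m_{\mathrm{reg}}=(1-\psi)(|\xi|^2-1)^{-1}$ and $m_{\mathrm{sing}}=\psi\,(|\xi|^2-1-i0)^{-1}$.

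The function $m_{\mathrm{reg}}$ is smooth on all of $\R^N$, vanishes near $S^{N-1}$, and is a classical symbol of order $-2$ (it decays like $|\xi|^{-2}$ with the corresponding derivative bounds); hence its inverse Fourier transform is a convolution kernel $K_{\mathrm{reg}}$ with $|K_{\mathrm{reg}}(x)|\lesssim|x|^{2-N}$ near the origin and rapid decay for $|x|\ge1$. In particular $K_{\mathrm{reg}}$ lies in the weak Lebesgue space $L^{N/(N-2),\infty}(\R^N)$ and in $L^r(\R^N)$ for every $r<\tfrac{N}{N-2}$, so by the weak Young (equivalently Hardy--Littlewood--Sobolev) inequality together with ordinary Young's inequality, convolution with $K_{\mathrm{reg}}$ maps $L^{p'}(\R^N)$ into $L^p(\R^N)$ whenever $\tfrac1{p'}-\tfrac1p\le\tfrac2N$, that is for all $p$ with $2\le p\le\tfrac{2N}{N-2}$; this covers the full range in question, and the upper exponent $\tfrac{2N}{N-2}$ is sharp for this piece.

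For $m_{\mathrm{sing}}$ one must exploit the oscillation governing the far field of $\Phi$, visible in \eqref{eq:3}, rather than mere size. The plan is to use the Sokhotski--Plemelj decomposition $(|\xi|^2-1-i0)^{-1}=\mathrm{p.v.}\,(|\xi|^2-1)^{-1}+i\pi\,\delta(|\xi|^2-1)$. Since $\delta(|\xi|^2-1)=\tfrac12\,d\sigma$ on $S^{N-1}$, the surface-measure term produces, up to a constant, the operator $f\mapsto\cF^{-1}\Bigl(\bigl(\widehat f|_{S^{N-1}}\bigr)\,d\sigma\Bigr)$, i.e. restriction of $\widehat f$ to $S^{N-1}$ followed by the dual (extension) operator; by the Stein--Tomas restriction theorem this is bounded $L^{p'}(\R^N)\to L^p(\R^N)$ precisely when $p'\le\tfrac{2(N+1)}{N+3}$, i.e. $p\ge\tfrac{2(N+1)}{N-1}$, which is the origin of the lower endpoint. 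For the principal-value term I would carry out a dyadic decomposition $\psi\,\mathrm{p.v.}\,(|\xi|^2-1)^{-1}=\sum_{j\ge0}\beta_j$ with $\beta_j$ supported where $\bigl|\,|\xi|-1\,\bigr|\sim 2^{-j}$, of size $\sim 2^j$ and with derivative bounds $|\partial^\alpha\beta_j|\lesssim 2^{j(1+|\alpha|)}$; on each such piece one applies a rescaled ($2^{-j}$-neighbourhood of $S^{N-1}$) version of the Stein--Tomas estimate, which for $p$ in the stated range yields an $L^{p'}\to L^p$ bound decaying geometrically in $j$, so the series converges. Summing the two terms of $m_{\mathrm{sing}}$ and adding the contribution of $m_{\mathrm{reg}}$ gives \eqref{eq:1}.

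The main obstacle is precisely the analysis of $m_{\mathrm{sing}}$ near $S^{N-1}$: it requires the sharp dual Stein--Tomas estimate --- which forces the exponent $\tfrac{2(N+1)}{N-1}$ --- and, for the principal-value part, a careful quantitative control of the Stein--Tomas bounds on $2^{-j}$-neighbourhoods of the sphere that is strong enough to sum the dyadic pieces. Handling the two endpoint exponents simultaneously is most cleanly done through an analytic-interpolation argument \`a la Stein. All of this is carried out in \cite{KRS87}.
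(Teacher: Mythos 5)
The paper does not give a proof of Theorem~\ref{thm:KRS}: it is quoted as a known result and attributed to \cite[Theorem 2.3]{KRS87}. So the comparison here is to the published Kenig--Ruiz--Sogge argument rather than to anything in the present paper. Their proof is organized around an analytic family $z\mapsto c(z)\,\chi(\xi)(|\xi|^2-1)^z$ and Stein's interpolation theorem for analytic families: one endpoint of the strip gives a trivially bounded multiplier, the other endpoint reduces to the dual Stein--Tomas estimate, and the resolvent multiplier sits at an interior point of the strip. Your proposal takes a different route (a hard decomposition $m=m_{\mathrm{reg}}+m_{\mathrm{sing}}$, then Sokhotski--Plemelj plus a dyadic decomposition in the distance $\bigl||\xi|-1\bigr|$), and your treatment of $m_{\mathrm{reg}}$ and of the $\delta$-measure contribution is correct.

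The gap is in the principal-value part. The dyadic pieces $\beta_j$, supported where $\bigl||\xi|-1\bigr|\sim 2^{-j}$ and of size $\sim 2^j$, do \emph{not} have geometrically decaying $L^{p'}\to L^p$ operator norms in the range under consideration. Writing $\beta_j\sim 2^j\,b_j$ with $b_j$ an $O(1)$ bump on the $\delta$-annulus ($\delta=2^{-j}$), the thickened Stein--Tomas bound gives $\bigl\|\cF^{-1}(b_j\,\widehat f\,)\bigr\|_{L^p}\lesssim\delta\,\|f\|_{L^{p'}}$ for all $p\ge\frac{2(N+1)}{N-1}$ (restriction to a $\delta$-shell costs $\delta^{1/2}$ in $L^2$, and the $TT^*$ argument then produces $\delta$), and one cannot do better uniformly on the duality line: interpolating this with the trivial $L^1\to L^\infty$ bound $\lesssim\delta$ keeps the exponent at $\delta^1$ for \emph{all} $p\ge p_0$. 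Hence $\|T_j\|_{L^{p'}\to L^p}\lesssim 2^j\cdot 2^{-j}=1$, uniformly in $j$, and the dyadic sum does not converge. This is not an endpoint pathology: the uniform $O(1)$ bound per piece is consistent with the fact that the kernels $\check\beta_j$ are essentially nested truncations of the full oscillatory kernel, so each $T_j$ is ``as singular'' as the whole operator. Consequently the sentence claiming a bound ``decaying geometrically in $j$'' is the step that would fail, and the appeal to Stein's interpolation theorem in your last paragraph is not a finishing touch for the two endpoint exponents but rather the essential ingredient replacing the entire dyadic summation.

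If you want to keep a dyadic argument for the non-endpoint range $\frac{2(N+1)}{N-1}<p<\frac{2N}{N-2}$, the decomposition should be carried out in \emph{physical} space (annuli $|x|\sim 2^j$) rather than in Fourier space. With the pointwise bound $|\Phi_1(x)|\lesssim(1+|x|)^{(1-N)/2}$ and with Stein--Tomas controlling the Fourier-side mass near $S^{N-1}$, one obtains for the $j$-th physical annulus a bound $\lesssim 2^{-j\lambda_p}\|f\|_{p'}$ with $\lambda_p=\frac{N-1}{2}-\frac{N+1}{p}>0$ precisely when $p>\frac{2(N+1)}{N-1}$; see Proposition~\ref{sec:resolv-estim-cocomp-1} in this paper, which implements exactly this idea following \cite[Lemma 1]{gutierrez04}. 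This recovers the open range of exponents; for the two endpoints, though, you genuinely need the analytic-family interpolation of \cite{KRS87}.
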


As a consequence of this estimate, the operator 
$\cR$ can be continuously extended as a mapping from $L^{p'}(\R^N)$
into $L^p(\R^N)$ for $\frac{2(N+1)}{(N-1)}\leq p\leq\frac{2N}{N-2}$
such that (\ref{eq:1}) still holds for $f \in L^{p'}(\R^N)$. We note that an extension of Theorem~\ref{thm:KRS} to more general pairs of 
exponents has been obtained by Guti\'errez in \cite[Theorem 6]{gutierrez04}, see also the proof of Theorem~\ref{sec:nonv-thm-2} below.  
We also recall the following estimate from \cite{gutierrez04}.

\begin{theorem} {(Limit case of \cite[Theorem 7]{gutierrez04} by Guti\'errez)}\label{thm:gut7}\\
Let $\frac{2(N+1)}{N-1}\leq p\leq \frac{2N}{N-2}$. Then there exists a constant $C>0$ such that
\begin{equation}
\sup\limits_{R\geq 1} \frac1R\int\limits_{B_R}\bigl|\cR f(x)\bigr|^2\, dx\leq C \|f\|_{L^{p'}(\R^N)}^2\quad\text{for all }f\in\cS.
\end{equation}
\end{theorem}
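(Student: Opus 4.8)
The plan is to exploit the outgoing character of $\cR$ by splitting its symbol into a part carrying the singularity on the unit sphere $S^{N-1}$ and a smooth remainder of order $-2$; the remainder is handled by Hardy--Littlewood--Sobolev/Young estimates, the singular part by combining the Sommerfeld asymptotics of $\Phi$ with the (dual) Stein--Tomas restriction theorem. Fix $\chi\in C_c^\infty(\R^N)$ with $\chi\equiv 1$ near $S^{N-1}$ and $\supp\chi\subset\{\tfrac12<|\xi|<2\}$, and for $f\in\cS$ write $\cR f=\cR^\flat f+\cR^\natural f$, where $\cR^\flat$ has multiplier $(1-\chi(\xi))(|\xi|^2-1)^{-1}$ and $\cR^\natural=\cR-\cR^\flat$ has multiplier $\chi(\xi)(|\xi|^2-1-i0)^{-1}$. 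The multiplier of $\cR^\flat$ is a smooth symbol of order $-2$, so its convolution kernel $K^\flat$ satisfies $|K^\flat(x)|\le C|x|^{2-N}$ for $|x|\le1$ and decays faster than any power as $|x|\to\infty$. Writing $K^\flat=K^\flat\mathbf 1_{B_1}+K^\flat\mathbf 1_{M_1}$ and applying the Hardy--Littlewood--Sobolev inequality to the first summand and Young's inequality to the second, one checks --- using $p'<\tfrac N2$ and $p\le\tfrac{2N}{N-2}$, both valid here --- that $\cR^\flat f\in L^{\frac{2N}{N-1}}(\R^N)$ with $\|\cR^\flat f\|_{\frac{2N}{N-1}}\le C\|f\|_{p'}$. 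Since $|B_R|^{1/N}/R=|B_1|^{1/N}$, Hölder's inequality on $B_R$ then gives
\[
\frac1R\int_{B_R}|\cR^\flat f|^2\,dx\ \le\ \frac{|B_R|^{1/N}}{R}\,\|\cR^\flat f\|_{\frac{2N}{N-1}}^2\ =\ |B_1|^{1/N}\,\|\cR^\flat f\|_{\frac{2N}{N-1}}^2\ \le\ C\|f\|_{p'}^2\qquad\text{for all }R>0.
\]

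For $\cR^\natural f$ I would use the far field behaviour of its kernel $K^\natural=\Phi-K^\flat$. Since the multiplier of $\cR^\natural$ has compact support, $K^\natural$ is smooth on $\R^N$; since $K^\flat$ decays rapidly, $K^\natural$ inherits the outgoing asymptotics of $\Phi$ from \eqref{eq:3}--\eqref{eq:4}: $K^\natural(x)=\kappa_N e^{i|x|}|x|^{\frac{1-N}{2}}\bigl(1+O(|x|^{-1})\bigr)$ as $|x|\to\infty$, while $|K^\natural(x)|\le C$ for $|x|\le1$ and $|K^\natural(x)|\le C|x|^{\frac{1-N}{2}}$ for $|x|\ge1$. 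In particular $K^\natural\in L^p(\R^N)$ because $p>\tfrac{2N}{N-1}$, so Young's inequality yields $\|\cR^\natural f\|_\infty\le\|K^\natural\|_p\|f\|_{p'}\le C\|f\|_{p'}$ and hence $\int_{B_2}|\cR^\natural f|^2\,dx\le C\|f\|_{p'}^2$. For $|x|\ge2$, the oscillatory structure of $K^\natural$ (via the approximation $|x-y|\approx|x|-\wh x\cdot y$) produces a far field expansion
\[
\cR^\natural f(x)=\kappa_N\,c\,e^{i|x|}|x|^{\frac{1-N}{2}}\,\widehat f(\wh x)+E_f(x),\qquad |E_f(x)|\le C\,|x|^{-\frac{N+1}{2}}\,\|f\|_{p'},
\]
where $\widehat f(\wh x)$ is the restriction of $\widehat f$ to $S^{N-1}$ (the far field pattern, up to a dimensional constant $c$).

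Granting this expansion, the estimate follows. For the leading term, polar coordinates give, for every $R\ge1$,
\[
\frac1R\int_{B_R}|\kappa_N c|^2\,|x|^{1-N}\,|\widehat f(\wh x)|^2\,dx
=\frac{|\kappa_N c|^2}{R}\int_0^R\rho^{1-N}\rho^{N-1}\,d\rho\int_{S^{N-1}}|\widehat f|^2\,d\sigma
=|\kappa_N c|^2\,\|\widehat f\|_{L^2(S^{N-1})}^2\le C\|f\|_{p'}^2,
\]
the last step being the (dual) Stein--Tomas restriction estimate, which holds precisely when $p'\le\tfrac{2(N+1)}{N+3}$, i.e. $p\ge\tfrac{2(N+1)}{N-1}$; this is where the lower endpoint of the admissible range is used. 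For the remainder, $\int_{B_R\cap M_2}|E_f|^2\,dx\le C\|f\|_{p'}^2\int_2^R\rho^{-(N+1)}\rho^{N-1}\,d\rho\le C\|f\|_{p'}^2$. Combining these two bounds with $\int_{B_2}|\cR^\natural f|^2\le C\|f\|_{p'}^2$ yields $\frac1R\int_{B_R}|\cR^\natural f|^2\,dx\le C\|f\|_{p'}^2$ for all $R\ge1$; adding the bound for $\cR^\flat f$ and passing to the supremum over $R\ge1$ proves the theorem.

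The one genuine difficulty is establishing the far field expansion of $\cR^\natural f$ with a remainder controlled by $\|f\|_{p'}$ rather than by Schwartz seminorms of $f$. A direct stationary-phase computation in $K^\natural*f$ does identify the far field pattern as $c\,\widehat f|_{S^{N-1}}$ correctly, but it leaves error terms such as $|x|^{-\frac{N+1}{2}}\int_{\R^N}|y|^2|f(y)|\,dy$, which are not bounded by $\|f\|_{p'}$; recovering the sharp $|x|^{-\frac{N+1}{2}}$ decay with constant $C\|f\|_{p'}$ requires exploiting the oscillation together with Hölder's inequality and the restriction estimate, and this uniform control is precisely the asymptotic analysis behind \cite[Theorem~7]{gutierrez04}. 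Alternatively, one may simply invoke \cite[Theorem~7]{gutierrez04} on the open interval $\tfrac{2(N+1)}{N-1}<p<\tfrac{2N}{N-2}$ and recover the two endpoint exponents by the density of $\cS$ in $L^{p'}$ together with Fatou's lemma.
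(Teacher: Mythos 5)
The paper does not prove this statement; it is quoted directly from Guti\'errez \cite[Theorem 7]{gutierrez04}, so there is no in-paper argument to compare against. Turning to the merits of your proposal: the decomposition of $\cR$ into a smooth remainder $\cR^\flat$ and a singular part $\cR^\natural$ Fourier-localized near $S^{N-1}$ is the right starting point (it mirrors the split $\Phi=\Phi_1+\Phi_2$ used in Section~\ref{sec:nonv-prop}), and your treatment of $\cR^\flat f$ via Young's inequality plus H\"older on $B_R$ is sound, as is your bookkeeping of where each endpoint restriction on $p$ enters. But the far field expansion of $\cR^\natural f$ with remainder bounded by $C|x|^{-(N+1)/2}\|f\|_{p'}$ is a genuine gap, and your own diagnosis is accurate: the Taylor expansion $|x-y|\approx |x|-\hat x\cdot y$ leaves error terms controlled by moments of $f$ such as $\int|y|^2|f(y)|\,dy$, i.e.\ by Schwartz seminorms rather than by $\|f\|_{p'}$, and upgrading such bounds to $L^{p'}$-uniform ones is precisely the substance of Guti\'errez's theorem, not a step one may grant. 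The fallback you offer also fails: density of $\cS$ in $L^{p'}$ together with Fatou extends an estimate from Schwartz functions to $L^{p'}$ at a \emph{fixed} exponent $p$, but it does not pass to the limit in $p$ itself, so it cannot bridge from the open interval $\frac{2(N+1)}{N-1}<p<\frac{2N}{N-2}$ to the two endpoint exponents.

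What replaces the pointwise far field expansion is a dyadic annular decomposition of $K^\natural$ combined with Stein--Tomas, in the style of Proposition~\ref{sec:resolv-estim-cocomp-1} in the paper (following Guti\'errez's Lemma~1): write $K^\natural=\sum_j K^\natural_j$ with $K^\natural_j$ supported in $|x|\sim 2^j$, use $\|K^\natural_j\|_\infty\lesssim 2^{-j(N-1)/2}$ together with the compact Fourier support of $K^\natural$ near $S^{N-1}$ to obtain $\|K^\natural_j*f\|_2\lesssim 2^{j/2}\|f\|_{\frac{2(N+1)}{N+3}}$, interpolate with the $L^1\to L^\infty$ bound, and sum. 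That is the argument your proposal should contain in place of the ``granting this expansion'' paragraph; without it, the proof is incomplete.
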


Next we recall the
Stein-Tomas-Theorem which is fundamental for the study of the operator
$\cR$.  

\begin{theorem}{(Stein-Tomas-Theorem \cite{tomas75})}\\
\label{sec:resolv-estim-nonv}
Let $1 \le q \le \frac{2(N+1)}{N+3}$. Then there exists a constant
$C=C(q)>0$ such that for every $u \in L^q(\R^N)$ we have $\hat
u|_{S^{N-1}} \in L^2(S^{N-1})$ and 
$$
\int_{S^{N-1}}|\hat u(\theta)|^2\,d\theta \le C \|u\|_{L^q(\R^N)}^2.
$$
\end{theorem}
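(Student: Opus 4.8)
The statement is the classical Stein--Tomas restriction theorem, and the plan is to prove it by the $TT^{*}$ method combined with a dyadic decomposition of the Fourier transform of surface measure, treating the endpoint exponent separately by analytic interpolation. \emph{First}, by duality the asserted inequality for $u\in\cS$ is equivalent to the Fourier extension estimate $\|\widehat{g\,d\sigma}\|_{L^{q'}(\R^N)}\le C\|g\|_{L^2(S^{N-1})}$, where $d\sigma$ denotes surface measure on $S^{N-1}$; once this is known for $u\in\cS$, the restriction $\widehat u|_{S^{N-1}}$ is defined for general $u\in L^q(\R^N)$ by density and the estimate persists. Setting $Tg:=\widehat{g\,d\sigma}$, one has $T^{*}f=\widehat f|_{S^{N-1}}$ and $TT^{*}f=f*\widehat{d\sigma}$ (up to harmless constants, using that $\widehat{d\sigma}$ is even), so by the $TT^{*}$ principle the whole theorem reduces to the convolution estimate
\[
\|f*\widehat{d\sigma}\|_{L^{q'}(\R^N)}\le C\|f\|_{L^q(\R^N)}\qquad\text{for }f\in\cS .
\]

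\emph{Second}, a stationary phase computation on the oscillatory integral defining $\widehat{d\sigma}$ gives the decay $|\widehat{d\sigma}(\xi)|\le C(1+|\xi|)^{-(N-1)/2}$. Fix a Littlewood--Paley partition $1=\psi_0(\xi)+\sum_{j\ge1}\psi(2^{-j}\xi)$ and put $K_j:=\psi(2^{-j}\cdot)\widehat{d\sigma}$ for $j\ge1$ and $K_0:=\psi_0\widehat{d\sigma}$. Two elementary bounds hold for each $j$: from the pointwise decay, $K_j\in L^\infty$ with $\|K_j\|_\infty\lesssim 2^{-j(N-1)/2}$, so $f\mapsto f*K_j$ maps $L^1\to L^\infty$ with norm $\lesssim 2^{-j(N-1)/2}$; and since $\widehat{K_j}$ is, up to constants, $d\sigma$ smoothed at scale $2^{-j}$, one gets $\|\widehat{K_j}\|_\infty\lesssim 2^{j}$, hence $f\mapsto f*K_j$ maps $L^2\to L^2$ with norm $\lesssim 2^{j}$ by Plancherel (for $j=0$ both norms are $O(1)$). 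Riesz--Thorin interpolation with $\theta=2/q'$ yields $\|f*K_j\|_{q'}\lesssim 2^{jE(\theta)}\|f\|_q$ with $E(\theta)=\theta-\tfrac{(N-1)(1-\theta)}{2}$, and a short computation shows $E(\theta)<0$ exactly when $q<\frac{2(N+1)}{N+3}$; summing the resulting geometric series over $j\ge0$ proves the convolution estimate, hence the theorem, for every $q$ strictly below the endpoint (the case $q=1$ being trivial since $\widehat u\in L^\infty$ and $S^{N-1}$ has finite measure).

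\emph{The main obstacle} is the endpoint $q=\frac{2(N+1)}{N+3}$, where $E(\theta)=0$ and the summation above fails. Here the plan is to replace real interpolation by Stein's analytic interpolation theorem: embed $\widehat{d\sigma}$ into an analytic family of distributions $K_z$ constructed from the meromorphically continued expressions $(1-|\xi|^2)_+^{z}/\Gamma(z+1)$, normalized so that $K_z$ reduces to a multiple of $d\sigma$ at $z=-1$, and consider the convolution operators $T_zf=f*\check K_z$. On the line $\Re z=-1$ one proves $\|T_z\|_{L^2\to L^2}\le Ce^{c|\Im z|^2}$ via Plancherel and uniform-in-$\Im z$ bounds on the corresponding Fourier multiplier (the technical heart, relying on Bessel-function/stationary-phase asymptotics with a complex parameter), while for $\Re z$ large $\check K_z$ is a genuine bounded function, giving $\|T_z\|_{L^1\to L^\infty}\le Ce^{c|\Im z|^2}$. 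Stein interpolation between these two vertical lines produces precisely the critical $L^q\to L^{q'}$ bound at $z=-1$, which by the reduction in the first paragraph completes the proof. Everything apart from the uniform $L^2\to L^2$ control of the analytic family is routine bookkeeping.
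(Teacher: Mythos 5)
The paper does not give a proof of this theorem: it is cited to Tomas and invoked as a black box in the sequel (e.g.\ in Propositions~\ref{prop:weak_farfield} and~\ref{sec:resolv-estim-cocomp-1}). Your proposal must therefore stand on its own. The $TT^{*}$ reduction and the dyadic decomposition in your second paragraph are correct and reproduce Tomas's original argument; the bounds $\|K_j\|_\infty\lesssim 2^{-j(N-1)/2}$ and $\|\widehat{K_j}\|_\infty\lesssim 2^{j}$ together with Riesz--Thorin settle the full open range $1\le q<\frac{2(N+1)}{N+3}$.

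The endpoint step is where the gap lies, and it is not merely a bookkeeping slip. With $T_zf=f*\check K_z$ and $K_z(\xi)=(1-|\xi|^2)_+^z/\Gamma(z+1)$, the multiplier $K_z$ on the line $\Re z=-1$ is a genuine distribution, not an $L^\infty$ function: for $z=-1+i\tau$ with $\tau\neq 0$ the factor $\Gamma(z+1)=\Gamma(i\tau)$ is finite and nonzero, so it does not tame the nonintegrable $(1-|\xi|^2)^{-1}$ blow-up at the unit sphere. Hence Plancherel does \emph{not} give an $L^2\to L^2$ bound on that line, and indeed $f\mapsto f*\widehat{d\sigma}$ (the value at $z=-1$) is unbounded on $L^2$. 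Moreover, even granting such a bound, interpolating between $\Re z=-1$ and a line with $\Re z$ large positive places the target $z=-1$ on the \emph{boundary} of the strip, and Stein interpolation would then return $L^2\to L^2$ at $z=-1$ rather than the $L^q\to L^{q'}$ estimate you need. The correct arrangement puts the $L^2\to L^2$ line at $\Re z=0$, where $K_z=1_{\{|\xi|\le 1\}}$ is trivially a bounded multiplier, and the $L^1\to L^\infty$ line at $\Re z=-\frac{N+1}{2}$, where the kernel $\check K_z(x)\sim |x|^{-\frac{N+1}{2}-\Re z}$ has decay exponent $0$ and is bounded; it is \emph{that} line, not the $L^2$ one, which requires the uniform Bessel-function / stationary-phase bounds with complex order that you correctly flag as the technical heart. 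The target $z=-1$ then lies strictly inside the strip, at interpolation parameter $\theta=\frac{2}{N+1}$, and Stein interpolation gives exactly $q=\frac{2(N+1)}{N+3}$.
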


For general solutions of $-\Delta u-u=f$ in $\R^N$ we will show that some kind of asymptotic expansion holds. 
Let us first recall the following consequence of results by Agmon.

\begin{theorem}{(Agmon {\cite{agmon90,agmon99}})}\\ \label{thm:agmon}
Let $\frac{2N}{N-1}< p\leq\frac{2N}{N-2}$ and consider a (distributional) solution $w\in L^p(\R^N)$ of
$$
-\Delta w - w=0\quad\text{in }\R^N.
$$
Then there is a unique $g\in H^{-s}(S^{N-1})$, $s>\frac{N-1}{2}-\frac{N}{p}$, such that 
\begin{equation}\label{eqn:Herglotz}
	w(x)=\int_{S^{N-1}}e^{ix\cdot\xi}g(\xi)\, d\sigma(\xi),\quad x\in\R^N.
\end{equation}
If, in addition, $\sup\limits_{R>1}\frac1R\int_{B_R}|w|^2\, dx<\infty$, then $g\in L^2(S^{N-1})$, and
the following asymptotic expansion is valid:
\begin{equation}\label{eqn:expansion_inhom}
\begin{aligned}
	\lim_{R\to\infty}\frac1R\int\limits_{B_R}\Bigl|w(x)\!-\!\Bigl(\frac{2\pi}{|x|}\Bigr)^{\frac{N-1}{2}}\bigl[&e^{i|x|-\frac{i(N-1)\pi}{4}}g(\wh{x})
	+e^{-i|x|+\frac{i(N-1)\pi}{4}}g(-\wh{x})\bigr]\Bigr|^2 dx=0
\end{aligned}
\end{equation}
\end{theorem}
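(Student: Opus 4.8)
The plan is to derive both assertions from Agmon's representation theorem and the Agmon--H\"ormander asymptotics theorem \cite{agmon90,agmon99}; I outline the main steps.

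Since $w\in L^p(\R^N)\subset\cS'(\R^N)$, taking Fourier transforms in $-\Delta w-w=0$ gives $(|\xi|^2-1)\wh w=0$, so $\supp\wh w\subseteq S^{N-1}$. Because $|\xi|^2-1$ vanishes to first order along $S^{N-1}$ (its gradient $2\xi$ is non-vanishing there), the structure theorem for distributions supported on a smooth hypersurface forces $\wh w=g\,d\sigma$ for a unique $g\in\cD'(S^{N-1})$. Taking inverse Fourier transforms and using that the inverse transform of a compactly supported distribution is a real-analytic function obtained by pairing against the smooth exponentials $\xi\mapsto e^{ix\cdot\xi}$, we conclude that $w$ agrees a.e.\ with $x\mapsto\int_{S^{N-1}}e^{ix\cdot\xi}g(\xi)\,d\sigma(\xi)$, up to a normalizing constant that we absorb into $g$ (this pairing is meaningful for $g$ in any $H^{-s}(S^{N-1})$). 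Uniqueness of $g$ in \eqref{eqn:Herglotz} is immediate, since $w$ determines $\wh w=g\,d\sigma$, which in turn determines $g$.

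It remains, for the first assertion, to place $g$ in $H^{-s}(S^{N-1})$. For $\psi\in C^\infty(S^{N-1})$ and any $\Phi\in\cS(\R^N)$ with $\Phi|_{S^{N-1}}=\psi$ one has, since $\wh w=g\,d\sigma$, that $\langle g,\psi\rangle=\langle\wh w,\Phi\rangle=\langle w,\wh\Phi\rangle$, hence
\[
|\langle g,\psi\rangle|\le\|w\|_{L^p(\R^N)}\,\inf\bigl\{\,\|\wh\Phi\|_{L^{p'}(\R^N)}:\Phi\in\cS(\R^N),\ \Phi|_{S^{N-1}}=\psi\,\bigr\}.
\]
So $g\in H^{-s}(S^{N-1})$ as soon as this infimum is $\le C\|\psi\|_{H^s(S^{N-1})}$, i.e.\ as soon as every $\psi$ admits a Schwartz extension whose Fourier transform is controlled in $L^{p'}$ by an $H^s$-norm of $\psi$. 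This is a Fourier extension estimate for the sphere, equivalent by duality to boundedness of the map $h\mapsto\widehat{h\,d\sigma}$ from $H^{-s}(S^{N-1})$ into $L^p(\R^N)$; that the sharp threshold is $s>\frac{N-1}{2}-\frac Np$ is exactly what Agmon's estimates provide (they refine the Stein--Tomas theorem and are in the spirit of Guti\'errez's off-diagonal resolvent bounds). I expect this sharp exponent to be the main obstacle; the weaker statement that $g$ lies in \emph{some} $H^{-s'}(S^{N-1})$, which suffices for a qualitative conclusion, is elementary because the Fourier transform of an $L^p$ function has finite order.

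For the second assertion, assume in addition that $\sup_{R>1}\frac1R\int_{B_R}|w|^2\,dx<\infty$, i.e.\ that $w$ lies in the Agmon--H\"ormander space $B^*$. The Agmon--H\"ormander theorem for solutions of $-\Delta w-w=0$ in $B^*$ states precisely that such $w$ is a Herglotz wave with density in $L^2(S^{N-1})$ and satisfies the $L^2$-averaged far-field expansion \eqref{eqn:expansion_inhom} with that density; the expansion itself issues from stationary phase applied to $\int_{S^{N-1}}e^{i|x|\,\wh x\cdot\xi}g(\xi)\,d\sigma(\xi)$, the two critical points $\xi=\pm\wh x$ yielding the two terms, the phase shifts $\mp\frac{(N-1)\pi}{4}$ coming from the Hessian signatures and the factor $(2\pi/|x|)^{(N-1)/2}$ from the $(N-1)$-dimensional stationary phase. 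By the uniqueness already established, the density produced here is the same $g$ as in the first part, so in fact $g\in L^2(S^{N-1})$ and \eqref{eqn:expansion_inhom} holds with it. This completes the proof; apart from the sharp restriction estimate, the only real care lies in matching Agmon's statements — phrased via the limiting absorption principle and Besov-type spaces — to the Lebesgue/Sobolev setting used here.
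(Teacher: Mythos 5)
The paper's own proof is a one-line citation: Theorems 4.1 and 6.2(ii)(a) of Agmon (1999) for the first assertion, and Theorems 4.3 and 4.5(ii) of Agmon (1990) for the second. Your proposal also defers both the sharp Sobolev index and the averaged far-field expansion \eqref{eqn:expansion_inhom} to Agmon (resp.\ Agmon--H\"ormander), so it follows essentially the same route while supplying a useful sketch of the underlying mechanism. The Fourier-support observation, the structure-theorem step (non-vanishing of $\nabla(|\xi|^2-1)=2\xi$ on $S^{N-1}$ forcing $\wh w=g\,d\sigma$ with no transversal derivatives, legitimate since $\wh w$, being compactly supported, has finite order), the uniqueness, and the stationary-phase heuristic behind \eqref{eqn:expansion_inhom} are all sound.

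There is, however, one genuine imprecision in the duality reduction. You want
\[
\inf\bigl\{\|\wh\Phi\|_{L^{p'}(\R^N)}:\Phi\in\cS,\ \Phi|_{S^{N-1}}=\psi\bigr\}\ \lesssim\ \|\psi\|_{H^s(S^{N-1})},
\]
and assert this is ``equivalent by duality to boundedness of the map $h\mapsto\widehat{h\,d\sigma}$ from $H^{-s}(S^{N-1})$ into $L^p(\R^N)$.'' Working out the quotient/annihilator duality in $L^{p'}(\R^N)$ one finds that the left-hand side equals (up to constants)
\[
\sup\bigl\{\,|\langle h,\psi\rangle|\ :\ h\in\cD'(S^{N-1}),\ \bigl\|\widehat{h\,d\sigma}\bigr\|_{L^p(\R^N)}\le 1\,\bigr\},
\]
so what is actually needed is the \emph{lower} bound $\|h\|_{H^{-s}(S^{N-1})}\lesssim\|\widehat{h\,d\sigma}\|_{L^p(\R^N)}$, i.e.\ that the extension operator is bounded \emph{from below} on its range, not that it is bounded from $H^{-s}$ to $L^p$. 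These two inequalities point in opposite directions, and the lower bound does not follow from Stein--Tomas or refinements thereof (which are upper bounds); it is precisely the a priori/trace estimate for generalized eigenfunctions that Agmon proves. This is also the natural shape of the problem: one is handed $w=\widehat{g\,d\sigma}\in L^p(\R^N)$ and must \emph{recover} regularity of $g$, which is inverting the extension map, not applying it. Since you already defer the sharp exponent to Agmon, the overall logic survives, but the characterization of the required estimate as a ``Fourier extension estimate refining Stein--Tomas'' is misleading and should be replaced by the correct inverse/a priori form.
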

\begin{proof}
The first assertion follows from Theorem 4.1 and Theorem 6.2~(ii)(a) in \cite{agmon99}, whereas
the second assertion can be deduced from Theorem 4.3 and Theorem 4.5~(ii) in \cite{agmon90}.
\end{proof}
Remark that for $u\in L^p(\R^N)$, $f\in L^{p'}(\R^N)$ satisfying $-\Delta u-u=f$ in $\R^N$, the function $w=u-\cR f$ belongs to 
$L^p(\R^N)$, by Theorem~\ref{thm:KRS}, and solves $-\Delta w-w=0$ in $\R^N$. Hence, we have the following
\begin{corollary}\label{coroll:asympt_inhom_phi}
Let $u\in L^p(\R^N)$ be a solution of $-\Delta u -u=f$ in $\R^N$ with $f\in L^{p'}(\R^N)$ and 
$\frac{2(N+1)}{N-1}\leq p\leq \frac{2N}{N-2}$. 
Then there is a unique $g\in H^{-s}(S^{N-1})$, $s>\frac{N-1}{2}-\frac{N}{p}$, such that
\begin{equation}\label{eqn:agmon_inhom}
	u(x)=[\cR f](x)+\int_{S^{N-1}}e^{ix\cdot\xi}g(\xi)\, d\sigma(\xi),\quad \text{for a.e. }x\in\R^N.
\end{equation}
Moreover, if $\sup\limits_{R>1}\frac1R\int_{B_R}|u|^2\, dx<\infty$, then $g\in L^2(S^{N-1})$.
\end{corollary}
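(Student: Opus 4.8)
The plan is to reduce to Theorem~\ref{thm:agmon} via the substitution $w:=u-\cR f$, as indicated in the remark preceding the statement. First I would check that $w\in L^p(\R^N)$: since $f\in L^{p'}(\R^N)$ and $\frac{2(N+1)}{N-1}\le p\le\frac{2N}{N-2}$, the continuous extension of $\cR$ furnished by Theorem~\ref{thm:KRS} gives $\cR f\in L^p(\R^N)$, hence $w=u-\cR f\in L^p(\R^N)$. Next I would verify that $w$ solves the homogeneous equation $-\Delta w-w=0$ distributionally. For $f\in\cS$ this is immediate, since $\cR f\in\cC^\infty(\R^N)$ solves $-\Delta(\cR f)-\cR f=f$ classically; for general $f\in L^{p'}(\R^N)$ I would approximate by $f_n\in\cS$ with $f_n\to f$ in $L^{p'}$, use $\cR f_n\to\cR f$ in $L^p\embed\cS'$ together with the continuity of $-\Delta-1$ on $\cS'$ to pass to the limit in $-\Delta(\cR f_n)-\cR f_n=f_n$, obtaining $-\Delta(\cR f)-\cR f=f$ in $\cS'$. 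Subtracting from $-\Delta u-u=f$ yields $-\Delta w-w=0$ in $\R^N$.

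With $w\in L^p(\R^N)$ a distributional solution of $-\Delta w-w=0$, I would invoke Theorem~\ref{thm:agmon}. Its hypothesis $\frac{2N}{N-1}<p\le\frac{2N}{N-2}$ is satisfied because $\frac{2(N+1)}{N-1}>\frac{2N}{N-1}$, so the admissible range in the corollary lies inside that of the theorem. This produces a unique $g\in H^{-s}(S^{N-1})$ with $s>\frac{N-1}{2}-\frac{N}{p}$ such that $w(x)=\int_{S^{N-1}}e^{ix\cdot\xi}g(\xi)\,d\sigma(\xi)$ for a.e.\ $x$, which is exactly \eqref{eqn:agmon_inhom} once $\cR f$ is added back. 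Uniqueness of $g$ as the representative for $u$ in \eqref{eqn:agmon_inhom} is inherited from the uniqueness statement in Theorem~\ref{thm:agmon} applied to $w$, since $\cR f$ is determined by $f$.

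For the final assertion, assume $\sup_{R>1}\frac1R\int_{B_R}|u|^2\,dx<\infty$. The same density argument as above — approximating $f$ by Schwartz functions, using $L^p$-convergence (so a.e.\ convergence along a subsequence) and Fatou's lemma — extends Theorem~\ref{thm:gut7} to $f\in L^{p'}(\R^N)$, giving $\sup_{R\ge1}\frac1R\int_{B_R}|\cR f|^2\,dx\le C\|f\|_{p'}^2<\infty$. Combining this with the hypothesis on $u$ through the triangle inequality in $L^2(B_R)$ yields $\sup_{R>1}\frac1R\int_{B_R}|w|^2\,dx<\infty$, and then the second part of Theorem~\ref{thm:agmon} gives $g\in L^2(S^{N-1})$, which completes the proof. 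There is no deep difficulty in this corollary, as it merely packages Theorems~\ref{thm:KRS}, \ref{thm:gut7} and~\ref{thm:agmon}; the only points requiring genuine care are the density extensions of the Schwartz-class statements (the identity $-\Delta(\cR f)-\cR f=f$ and the $L^2$-bound of Theorem~\ref{thm:gut7}) to arbitrary $f\in L^{p'}(\R^N)$, and the bookkeeping on exponents ensuring that Theorem~\ref{thm:agmon} applies.
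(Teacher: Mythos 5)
Your proof is correct and follows essentially the same route as the paper: subtracting $\cR f$, applying Theorem~\ref{thm:KRS} to see $w=u-\cR f\in L^p(\R^N)$, and invoking Theorem~\ref{thm:agmon} for $w$, with the $L^2$-average bound on $\cR f$ (the extension of Theorem~\ref{thm:gut7} to $L^{p'}$ data by density) handling the final assertion. You supply more detail than the paper's brief preceding remark, in particular the density arguments for $-\Delta(\cR f)-\cR f=f$ in $\cS'$ and for the extension of Theorem~\ref{thm:gut7}, which the paper leaves implicit.
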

In order to obtain an asymptotic expansion for $u$, it remains to study the asymptotics 
of $\cR f$ where $f\in L^{p'}(\R^N)$. The remainder of this section is therefore devoted to the far field pattern
associated with the operator $\cR$. We start with the following result which is well-known
to experts, see e.g. \cite[Theorem 2.5]{colton-kress} for the case
$N=3$. Since we could not find a reference for general $N \ge 3$,
we give the proof here for the reader's convenience.

\begin{proposition}
\label{prop:farfield}
Let $f\in \cC_c^\infty(\R^N)$. Then,
$$
[\cR f](x)=
\sqrt{\frac{\pi}{2}}\ \frac{e^{i|x|-\frac{i(N-3)\pi}{4}}}{|x|^{\frac{N-1}{2}}}\
\wh{f}(\wh{x}) + O\left(|x|^{-\frac{N+1}{2}}\right) \qquad \text{as $|x|\to\infty$.}
$$
\end{proposition}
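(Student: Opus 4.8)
The plan is to start from the convolution representation $[\cR f](x)=(\Phi*f)(x)=\int_{\R^N}\Phi(x-y)f(y)\,dy$, which is legitimate since $f\in\cC_c^\infty(\R^N)$ and $\Phi$ is locally integrable by \eqref{eq:4}. Fix a ball $B_\rho$ containing $\supp f$. For $|x|$ large and $y\in\supp f$ we have $|x-y|\to\infty$, so we may insert the large-argument asymptotics \eqref{eq:3} of the Hankel function into \eqref{eqn:fund_sol} to get
\begin{equation*}
\Phi(x-y)=\frac{i}{4}(2\pi)^{\frac{2-N}{2}}|x-y|^{\frac{2-N}{2}}\sqrt{\frac{2}{\pi|x-y|}}\,e^{i(|x-y|-\frac{(N-1)\pi}{4})}\bigl[1+O(|x-y|^{-1})\bigr].
\end{equation*}
Collecting the constants, this is $c_N\,|x-y|^{\frac{1-N}{2}}e^{i|x-y|}\bigl[1+O(|x-y|^{-1})\bigr]$ with $c_N=\frac{i}{4}(2\pi)^{\frac{2-N}{2}}\sqrt{\frac{2}{\pi}}\,e^{-\frac{i(N-1)\pi}{4}}$; one then checks $c_N=\sqrt{\frac{\pi}{2}}\,(2\pi)^{-\frac{N-1}{2}}\,e^{-\frac{i(N-3)\pi}{4}}$ after using $i=e^{i\pi/2}$, which is exactly the prefactor appearing in the statement modulo the $(2\pi)^{-\frac{N-1}{2}}$ that gets absorbed into the inverse Fourier normalization below.

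The core step is the expansion of the phase and amplitude in $y$. Writing $r=|x|$, $\hat x=x/r$, one has $|x-y|=r-\hat x\cdot y+O(r^{-1})$ uniformly for $y$ in the fixed compact set, hence $e^{i|x-y|}=e^{ir}e^{-i\hat x\cdot y}\bigl(1+O(r^{-1})\bigr)$ and $|x-y|^{\frac{1-N}{2}}=r^{\frac{1-N}{2}}\bigl(1+O(r^{-1})\bigr)$. Substituting back and integrating against $f(y)$,
\begin{equation*}
[\cR f](x)=c_N\,\frac{e^{ir}}{r^{\frac{N-1}{2}}}\int_{\R^N}e^{-i\hat x\cdot y}f(y)\,dy+O\!\left(r^{-\frac{N+1}{2}}\right),
\end{equation*}
and the integral equals $(2\pi)^{\frac{N}{2}}\wh f(\hat x)$ by the definition of the Fourier transform with the $(2\pi)^{-N/2}$ normalization used in the paper. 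Combining the constants $c_N(2\pi)^{\frac{N}{2}}=\sqrt{\frac{\pi}{2}}\,e^{-\frac{i(N-3)\pi}{4}}$ gives the claimed formula. One must also verify that the error terms are genuinely $O(r^{-\frac{N+1}{2}})$: each correction factor contributes an extra power $r^{-1}$ relative to the main term $r^{-\frac{N-1}{2}}$, and since $f$ has compact support the implied constants depend only on $\supp f$, $\|f\|_\infty$, and the remainder bounds in \eqref{eq:3}, so integrating over the fixed compact set is harmless.

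The main technical point — and the step I would write out most carefully — is making the uniformity of the remainder estimates precise: one needs a clean bound of the form $\bigl|\Phi(x-y)-c_N r^{\frac{1-N}{2}}e^{ir}e^{-i\hat x\cdot y}\bigr|\le C\,r^{-\frac{N+1}{2}}$ for all $|x|\ge 2\rho$ and $|y|\le\rho$, which requires controlling simultaneously the Hankel remainder $O(|x-y|^{-1})$, the Taylor error in $|x-y|=r-\hat x\cdot y+O(r^{-1})$ inside the oscillating exponential (where one uses $|e^{i\theta_1}-e^{i\theta_2}|\le|\theta_1-\theta_2|$), and the binomial error in the power $|x-y|^{\frac{1-N}{2}}$. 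A convenient way to organize this is to substitute $t=|x-y|$ and expand everything in powers of $y/r$ with explicit second-order remainder, noting $|x-y|\ge r-\rho\ge r/2$ throughout. Once this uniform estimate is in hand, the proposition follows by integrating in $y$ over $\supp f$ and invoking the definition of $\wh f$.
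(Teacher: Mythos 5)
Your argument mirrors the paper's proof essentially step for step: the convolution representation $\cR f = \Phi * f$, insertion of the large-argument Hankel asymptotics \eqref{eq:3}, the uniform Taylor expansion of the phase $|x-y| = |x| - \hat x\cdot y + O(|x|^{-1})$ and of the amplitude $|x-y|^{\frac{1-N}{2}}$ over the compact support of $f$, and finally identification of the resulting $y$-integral with $(2\pi)^{N/2}\wh f(\hat x)$. One small slip in constant bookkeeping: in the parenthetical you write $c_N=\sqrt{\tfrac{\pi}{2}}\,(2\pi)^{-\frac{N-1}{2}}e^{-\frac{i(N-3)\pi}{4}}$, but the correct intermediate form is $c_N=\sqrt{\tfrac{\pi}{2}}\,(2\pi)^{-\frac{N}{2}}e^{-\frac{i(N-3)\pi}{4}}=\tfrac12(2\pi)^{\frac{1-N}{2}}e^{-\frac{i(N-3)\pi}{4}}$ (the paper's $\gamma_N$); your final combination $c_N(2\pi)^{N/2}=\sqrt{\tfrac{\pi}{2}}\,e^{-\frac{i(N-3)\pi}{4}}$ is consistent with this corrected $c_N$, so the end result is unaffected.
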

\begin{proof}
Choose $R>0$ such that $\supp f\subset B_R$. For $x\in\R^N$
with $|x|\geq 2R$ we can write, using the asymptotics in \eqref{eq:3}, 
\begin{align*}
[\cR f](x)=\gamma_N
\int\limits_{B_R}\frac{e^{i|x-y|}}{|x-y|^{\frac{N-1}{2}}}\Bigl(1+\delta(|x-y|)\Bigr)f(y)\,
dy, \quad\text{ $\gamma_N=\frac{(2\pi)^{\frac{1-N}{2}}}{2}e^{-\frac{i(N-3)\pi}{4}}$,}
\end{align*}
where the function $r \mapsto \delta(r)$ satisfies $\delta_*:= \sup\limits_{r\geq 1}r|\delta(r)|<\infty$.
Furthermore, there exists a constant $\zeta>0$ such that
\begin{equation}\label{eqn:11}
\Bigl|\ |x-y|-|x|+\wh{x}\cdot y\ \Bigr|\leq \frac{\zeta|y|^2}{|x|}\quad\text{ and }\quad
\left||x-y|^{-\frac{N-1}{2}}-|x|^{-\frac{N-1}{2}}\right|\leq \frac{\zeta|y|}{|x|^{\frac{N+1}{2}}}
\end{equation}
for all $x, y\in\R^N$ with $x\neq 0$ and $|y|\leq\frac{|x|}{2}$. 
As a consequence, we may estimate, for $|x|\geq 2R$,
\begin{align*}
&\Bigl|\int_{B_R}\Bigl(\frac{e^{i|x-y|}}{|x-y|^{\frac{N-1}{2}}}-\frac{e^{i|x|-i\wh{x}\cdot y}}{|x|^{\frac{N-1}{2}}}\Bigr)f(y)\, dy\Bigr|
\leq \int_{B_R}\Bigl|\frac{e^{i(|x-y|-|x|+\wh{x}\cdot y)}}{|x-y|^{\frac{N-1}{2}}}-\frac{1}{|x|^{\frac{N-1}{2}}}\Bigr| |f(y)|\, dy\\
&\leq \int\limits_{B_R}\Bigl(\frac{|e^{i(|x-y|-|x|+\wh{x}\cdot y)}-1|}{|x-y|^{\frac{N-1}{2}}}+\Bigl|\frac{1}{|x-y|^{\frac{N-1}{2}}}-\frac{1}{|x|^{\frac{N-1}{2}}}\Bigr|\Bigr)\ |f(y)|\, dy\\
&\leq \int_{B_R}\Bigl(\frac{\sqrt{2}\zeta |y|^2}{|x|\
  |x-y|^{\frac{N-1}{2}}}+\frac{\zeta|y|}{|x|^{\frac{N+1}{2}}}\Bigr)\
|f(y)|\, dy \leq \frac{\kappa_R \|f\|_{L^1(\R^N)}}{|x|^{\frac{(N+1)}{2}}}
\end{align*}
with $\kappa_R:=
(2^{\frac{N-1}{2}}\sqrt{2}R^2+R)\zeta$ and 
\begin{align*}
\Bigl|\,\int\limits_{B_R}\frac{e^{i|x-y|}}{|x-y|^{\frac{N-1}{2}}}\delta(|x-y|)f(y)\, dy\Bigr|
\leq\delta_*\int\limits_{B_R}\frac{|f(y)|}{|x-y|^{\frac{(N+1)}{2}}}\, dy
\leq \frac{2^{\frac{(N+1)}{2}}\delta_*}{|x|^{\frac{(N+1)}{2}}}\|f\|_{L^1(\R^N)}.
\end{align*}
Combining these two estimates, we obtain
\begin{align*}
\left|[\cR
  f](x)-\gamma_N\frac{e^{i|x|}}{|x|^{\frac{N-1}{2}}}(2\pi)^{\frac{N}{2}}\wh{f}(\wh{x})\right|
\quad\leq |\gamma_N|\left(\kappa_R+2^{\frac{(N+1)}{2}}\delta_*\right)\|f\|_{L^1(\R^N)} |x|^{-\frac{(N+1)}{2}}
\end{align*}
for $|x| \ge 2R$, and the conclusion follows.
\end{proof}

The pointwise asymptotic expansion given in Proposition~\ref{prop:farfield} does not extend to general functions $f \in L^{p'}(\R^N)$ in the case
where $\frac{2(N+1)}{(N-1)}\leq p\leq \frac{2N}{N-2}$. Nevertheless,
we have the following weaker variant of these asymptotics.

\begin{proposition}\label{prop:weak_farfield}
Let $\frac{2(N+1)}{N-1}\leq p\leq \frac{2N}{N-2}$ and $f\in L^{p'}(\R^N)$. Then
\begin{equation}\label{eqn:weak_asympt}
\lim_{R\to\infty}\frac{1}{R}\int\limits_{B_R}\Bigl|\cR
  f(x)-\sqrt{\frac{\pi}{2}}\ \frac{e^{i|x|-\frac{i(N-3)\pi}{4}}}{|x|^{\frac{N-1}{2}}}\
  \wh{f}(\wh{x})\Bigr|^2\, dx=0.
\end{equation}
\end{proposition}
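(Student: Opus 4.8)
The plan is to argue by density from the pointwise expansion already proved in Proposition~\ref{prop:farfield}, which settles the case $f\in\cC_c^\infty(\R^N)$, using the uniform local $L^2$--bound of Theorem~\ref{thm:gut7} together with the Stein--Tomas estimate (Theorem~\ref{sec:resolv-estim-nonv}) to control the approximation error. Write
$$
Tf(x):=\cR f(x)-\sqrt{\tfrac{\pi}{2}}\,\frac{e^{i|x|-\frac{i(N-3)\pi}{4}}}{|x|^{\frac{N-1}{2}}}\,\wh f(\wh x),
$$
and for a measurable function $h$ on $\R^N$ set $N(h):=\bigl(\limsup_{R\to\infty}\frac1R\int_{B_R}|h|^2\,dx\bigr)^{1/2}$. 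This is a seminorm on the set of functions for which it is finite: Minkowski's inequality for each fixed $R$ gives $(\frac1R\int_{B_R}|h_1+h_2|^2)^{1/2}\le(\frac1R\int_{B_R}|h_1|^2)^{1/2}+(\frac1R\int_{B_R}|h_2|^2)^{1/2}$, and then $\limsup(a_R+b_R)\le\limsup a_R+\limsup b_R$. The proposition is precisely the claim that $N(Tf)=0$ for every $f\in L^{p'}(\R^N)$.

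First I would record the uniform bound $\frac1R\int_{B_R}|Tf|^2\,dx\le C\|f\|_{p'}^2$ for all $R\ge1$; this also shows that $Tf$ is a well-defined element of $L^2(B_R)$ for each $R$, since $p\ge2$ forces $\cR f\in L^p(\R^N)\subset L^2_{\mathrm{loc}}(\R^N)$ by Theorem~\ref{thm:KRS}, while Stein--Tomas makes sense of the trace $\wh f|_{S^{N-1}}$. The $\cR f$--contribution is Theorem~\ref{thm:gut7}, extended to $L^{p'}(\R^N)$ by density. For the far field term, passing to polar coordinates gives
\[
\frac1R\int_{B_R}\Bigl|\sqrt{\tfrac{\pi}{2}}\,\frac{e^{i|x|-\frac{i(N-3)\pi}{4}}}{|x|^{\frac{N-1}{2}}}\,\wh f(\wh x)\Bigr|^2\,dx
=\frac1R\int_0^R\!\!\int_{S^{N-1}}\tfrac{\pi}{2}\,|\wh f(\theta)|^2\,d\sigma(\theta)\,dr
=\frac{\pi}{2}\int_{S^{N-1}}|\wh f(\theta)|^2\,d\sigma(\theta),
\]
which is $\le C\|f\|_{p'}^2$ by Theorem~\ref{sec:resolv-estim-nonv}, because the hypothesis $p\ge\frac{2(N+1)}{N-1}$ is exactly equivalent to $p'\le\frac{2(N+1)}{N+3}$; note also that the integrand $|x|^{1-N}$ is locally integrable near the origin since $N-1<N$. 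Thus $\|h\|_\ast:=\sup_{R\ge1}\bigl(\frac1R\int_{B_R}|h|^2\,dx\bigr)^{1/2}$ is finite on $\mathrm{Ran}\,T$, the map $f\mapsto Tf$ is bounded from $L^{p'}(\R^N)$ into this normed space, and $N(h)\le\|h\|_\ast$.

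Next, for $\phi\in\cC_c^\infty(\R^N)$, Proposition~\ref{prop:farfield} yields $|T\phi(x)|\le C_\phi|x|^{-\frac{N+1}{2}}$ for $|x|$ large, so $\frac1R\int_{B_R}|T\phi|^2\,dx\le\frac1R\bigl(\int_{B_1}|T\phi|^2\,dx+C_\phi^2|S^{N-1}|\int_1^R r^{-2}\,dr\bigr)\to0$ as $R\to\infty$, i.e. $N(T\phi)=0$. Finally, given $f\in L^{p'}(\R^N)$ (here $1<p'<\infty$, so $\cC_c^\infty(\R^N)$ is dense) and $\eps>0$, I would pick $\phi\in\cC_c^\infty(\R^N)$ with $\|f-\phi\|_{p'}<\eps$ and use subadditivity of $N$: $N(Tf)\le N(T\phi)+N(T(f-\phi))\le0+\|T(f-\phi)\|_\ast\le C\eps$. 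Letting $\eps\to0$ gives $N(Tf)=0$, which is \eqref{eqn:weak_asympt}.

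The step requiring the most care — the \emph{main obstacle} — is the uniform far-field bound: one must verify that the admissible range of $p$ matches the Stein--Tomas exponent condition at \emph{both} endpoints, and that the seminorm $N$ is genuinely subadditive and dominated by $\|\cdot\|_\ast$, so that the density/closing step is legitimate. Everything else is a routine approximation argument assembled from the estimates already available.
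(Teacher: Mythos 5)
Your proof is correct and follows essentially the same route as the paper's: approximate $f$ by $g\in\cC_c^\infty$, use Proposition~\ref{prop:farfield} for the smooth piece, Theorem~\ref{thm:gut7} for $\cR(f-g)$, and Stein--Tomas for the far-field difference. Packaging the triangle inequality as a seminorm $N$ dominated by $\|\cdot\|_\ast$ is a tidy reorganization but not a different argument.
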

\begin{proof}
As a consequence of Theorems~\ref{thm:KRS} and~\ref{sec:resolv-estim-nonv}, the integrand
in \eqref{eqn:weak_asympt} belongs to $L^2_{\text{loc}}(\R^N)$. Letting $\kappa_N=\sqrt{\frac{\pi}{2}}e^{-\frac{i(N-3)\pi}{4}}$ and
$w(x)=\cR f(x)-\kappa_N\frac{e^{i|x|}}{|x|^{\frac{N-1}{2}}}\
\wh{f}(\wh{x})$, it follows that $w\in L^2(B_1(0))$ and therefore
\begin{equation*}
\frac{1}{R}\int\limits_{\{|x|<1\}}\Bigl|\cR
  f(x)-\kappa_N\frac{e^{i|x|}}{|x|^{\frac{N-1}{2}}}\ \wh{f}(\wh{x})\Bigr|^2\, dx=\frac1R\|w\|_{L^2(B_1(0))}^2\to 0\quad\text{as $R\to\infty$.}
\end{equation*}
Let now $\eps>0$ be given, and consider $g\in \cC^\infty_c(\R^N)$ such
that $\|g-f\|_{L^{p'}(\R^N)}<\eps$. By Proposition \ref{prop:farfield}
we have 
$$
M_g:=\sup_{|x|\geq 1}|x|^{N+1}\Bigl|\cR
  g(x)-\kappa_N\frac{e^{i|x|}}{|x|^{\frac{N-1}{2}}}\ \wh{g}(\wh{x})\Bigr|^2<\infty
$$
and therefore 
\begin{align*}
\frac{1}{R}\int\limits_{\{1\leq|x|<R\}}\Bigl|\cR
  g(x)&-\kappa_N\frac{e^{i|x|}}{|x|^{\frac{N-1}{2}}}\ \wh{g}(\wh{x})\Bigr|^2\, dx\\
&\leq \frac{M_g}{R}\int\limits_{\{1\leq|x|<R\}} |x|^{-(N+1)}\, dx\leq \frac{M_g\omega_N}{R}\to 0,
\end{align*}
as $R\to\infty$. According to Theorem \ref{thm:gut7}, 
\begin{equation*}
\sup_{R \ge 1} \frac{1}{R}\int_{B_R}\Bigl|\cR f(x)-\cR g(x)\Bigr|^2\, dx< C \eps^2
\end{equation*}
with a constant $C>0$ independent of $g$. Moreover, by Theorem~\ref{sec:resolv-estim-nonv} we have 
\begin{align*}
\sup_{R \ge 1} \frac{1}{R}\int\limits_{B_R}\Bigl|\frac{e^{i|x|}}{|x|^{\frac{N-1}{2}}}\
  \Bigl(\wh{f}(\wh{x})-\wh{g}(\wh{x})\Bigr)\Bigr|^2\, dx
&=\sup_{R \ge 1} \frac1R\int_0^R \int_{S^{N-1}}|\wh{(f-g)}(\omega)|^2\, d\sigma(\omega)\, dr\\
&\leq C(p)\|f-g\|_{L^{p'}(\R^N)}^2<C(p)\eps^2. 
\end{align*}
with a constant $C(p)>0$ independent of $g$. Combining these estimates, we find
\begin{align*}
&\limsup_{R\to\infty}\frac{1}{R}\int\limits_{B_R}\Bigl|\cR
  f(x)-\kappa_N\frac{e^{i|x|}}{|x|^{\frac{N-1}{2}}}\ \wh{f}(\wh{x})\Bigr|^2\, dx 
  \leq \limsup_{R\to\infty}\frac{3}{R}\int_{B_R}\Bigl|\cR f(x)-\cR g(x)\Bigr|^2\, dx \\
&+
\limsup_{R\to\infty}\frac{3}{R}\int\limits_{B_R}\Bigl|\cR
  g(x)-\kappa_N\frac{e^{i|x|}}{|x|^{\frac{N-1}{2}}}\ \wh{g}(\wh{x})\Bigr|^2\, dx\\
&\qquad \qquad \qquad \qquad \qquad \qquad \qquad
+\limsup_{R\to\infty}\frac{3|\kappa_N|^2}{R}\int_{B_R}\Bigl|\frac{e^{i|x|}}{|x|^{\frac{N-1}{2}}}
  \Bigl( \wh{f}(\wh{x})-\wh{g}(\wh{x})\Bigr)\Bigr|^2\, dx\\
&< 3C^2 \eps^2 +3|\kappa_N|^2C(p)\eps^2.
\end{align*}
Since this holds for every $\eps>0$, \eqref{eqn:weak_asympt} follows.
\end{proof}

In the case where the function $f$ is integrable and exhibits some appropriate decay at infinity, 
a pointwise asymptotic expansion can also be obtained.

\begin{proposition}\label{prop:farfield_N}
Let $f\in L^1(\R^N)$ satisfy $|f(x)|\leq C|x|^{-N-\eps}$ for all $x\neq 0$ for some 
$C, \eps>0$. Then, 
$$
[\cR f](x)= \sqrt{\frac{\pi}{2}}\ \frac{e^{i|x|-\frac{i(N-3)\pi}{4}}}{|x|^{\frac{N-1}{2}}}\ \wh{f}\left(\wh{x}\right) 
+ o(|x|^{-\frac{N-1}{2}}), \quad\text{as $|x|\to\infty$.}
$$
\end{proposition}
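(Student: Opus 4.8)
The plan is to reduce to the compactly supported case treated in Proposition~\ref{prop:farfield} by truncation. The key observation is that the proof of Proposition~\ref{prop:farfield} never uses smoothness of $f$: it only uses $f \in L^1(\R^N)$ and $\supp f \subset B_R$, with the error term controlled by $\|f\|_{L^1}$ and a power of $R$. Hence, for every $\rho > 0$, writing $g_\rho := f\chi_{B_\rho}$ and $h_\rho := f\chi_{M_\rho}$, Proposition~\ref{prop:farfield} applies to $g_\rho$ and gives
\[
[\cR g_\rho](x) = \sqrt{\tfrac{\pi}{2}}\,\frac{e^{i|x|-\frac{i(N-3)\pi}{4}}}{|x|^{\frac{N-1}{2}}}\,\wh{g_\rho}(\wh x) + O\!\bigl(|x|^{-\frac{N+1}{2}}\bigr)\qquad\text{as }|x|\to\infty,
\]
where the implied constant depends on $\rho$ (through $\|g_\rho\|_{L^1}\le\|f\|_{L^1}$ and a factor like $\rho^2$), but not on $x$. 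It then remains to estimate the tail $\cR h_\rho = \Phi*h_\rho$ (an absolutely convergent integral for $|x|$ large, by \eqref{eq:4}) and the Fourier remainder $\wh{h_\rho}$, with bounds that are uniform in $x$ and shrink as $\rho\to\infty$.

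For the Fourier term this is immediate: $|\wh{h_\rho}(\wh x)| \le (2\pi)^{-N/2}\|h_\rho\|_{L^1} = (2\pi)^{-N/2}\eta(\rho)$, where $\eta(\rho):=\int_{M_\rho}|f|\to 0$ as $\rho\to\infty$ since $f\in L^1(\R^N)$. For $\cR h_\rho(x)=\int_{|y|>\rho}\Phi(x-y)f(y)\,dy$, the plan is to split the domain of integration at $|x-y|=|x|/2$. On $\{|x-y|\ge|x|/2\}$, estimate \eqref{eq:4} gives $|\Phi(x-y)|\le C_0(|x|/2)^{\frac{1-N}{2}}$, so this part is at most $C_0 2^{\frac{N-1}{2}}\,\eta(\rho)\,|x|^{\frac{1-N}{2}}$. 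On $\{|x-y|<|x|/2\}$ one has $|y|\ge|x|/2$, so the decay hypothesis yields $|f(y)|\le C\,2^{N+\eps}|x|^{-N-\eps}$, while $\int_{\{|z|<|x|/2\}}|\Phi(z)|\,dz \le C_1\bigl(1+|x|^{\frac{N+1}{2}}\bigr)$ by \eqref{eq:4} (integrating the locally integrable singularity $|z|^{2-N}$ near $0$, and $|z|^{\frac{1-N}{2}}$ out to radius $|x|/2$); hence this part is at most $C_2|x|^{\frac{1-N}{2}-\eps}$ for $|x|$ large. Altogether,
\[
|\cR h_\rho(x)| \le \bigl(C_0 2^{\frac{N-1}{2}}\eta(\rho) + C_2|x|^{-\eps}\bigr)\,|x|^{\frac{1-N}{2}}\qquad\text{for }|x|\text{ large.}
\]

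Finally I would combine the three contributions. Decomposing $\cR f(x) - \sqrt{\pi/2}\,|x|^{-\frac{N-1}{2}}e^{i|x|-\frac{i(N-3)\pi}{4}}\wh f(\wh x)$ according to $f=g_\rho+h_\rho$ (using $\wh f = \wh{g_\rho}+\wh{h_\rho}$), multiplying by $|x|^{\frac{N-1}{2}}$, and letting $|x|\to\infty$ with $\rho$ fixed: the $g_\rho$-contribution tends to $0$ by Proposition~\ref{prop:farfield}, while the $h_\rho$-contributions are bounded by $C_3\,\eta(\rho)$ in the limit. Thus
\[
\limsup_{|x|\to\infty} |x|^{\frac{N-1}{2}}\Bigl|\cR f(x) - \sqrt{\tfrac{\pi}{2}}\,\frac{e^{i|x|-\frac{i(N-3)\pi}{4}}}{|x|^{\frac{N-1}{2}}}\wh f(\wh x)\Bigr| \le C_3\,\eta(\rho)
\]
for every $\rho>0$, and sending $\rho\to\infty$ gives the claimed $o(|x|^{-\frac{N-1}{2}})$ bound. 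The one delicate point is the estimate of $\cR h_\rho$ in the region $y\approx x$: this is exactly where the \emph{pointwise} decay $|f(x)|\le C|x|^{-N-\eps}$ (rather than mere integrability) is needed, since the factor $|x|^{-N-\eps}$ must beat the growth $|x|^{\frac{N+1}{2}}$ of $\int_{|z|<|x|/2}|\Phi(z)|\,dz$ coming from the slow $|z|^{\frac{1-N}{2}}$ decay of the fundamental solution $\Phi$.
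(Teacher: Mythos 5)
Your proof is correct, and it takes a genuinely different route from the paper's. The paper splits the convolution $\Phi*f$ directly into three $x$-dependent regions: $I_1$ over $B_1(x)$, $I_2$ over $\{|x-y|>1,\ |y|\ge\sqrt{|x|}\}$, and $I_3$ over $\{|x-y|>1,\ |y|\le\sqrt{|x|}\}$. The sliding threshold $\sqrt{|x|}$ is chosen so that on $I_3$ the stationary-phase-type estimates from the compact-support case can be redone with a $\rho\sim\sqrt{|x|}$-sized support, giving a $|x|^{-(N-1)/2-\eps/4}$ error; $I_2$ is then handled with a cancellation lemma of Alsholm--Schmidt and the $L^1$ tail of $f$, and $I_1$ uses the pointwise decay of $f$. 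Your truncation $f=g_\rho+h_\rho$ with a fixed radius $\rho$ avoids all of this fine tuning: you invoke Proposition~\ref{prop:farfield} as a black box for $g_\rho$ (correctly observing that its proof uses only $f\in L^1$ with compact support), estimate the tail crudely by $\eta(\rho)|x|^{(1-N)/2}$ plus a lower-order term, and recover the sharp conclusion by the two-step limit $|x|\to\infty$ then $\rho\to\infty$. You do not need the Alsholm--Schmidt lemma at all. Both proofs isolate the same delicate point --- the region $y\approx x$, where the pointwise decay $|f(y)|\lesssim|y|^{-N-\eps}$ must beat the $|x|^{(N+1)/2}$-growth of $\int_{|z|<|x|/2}|\Phi(z)|\,dz$ --- and you flag this accurately. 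The only price of your argument is that it yields no explicit rate in the $o(|x|^{-(N-1)/2})$, whereas the paper's direct computation in fact gives a rate $O(|x|^{-(N-1)/2-\eps/4})$ for $I_1$ and $I_3$; since the statement claims only $o$, this costs nothing here.
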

\begin{proof}
In order to prove the asymptotic expansion for $\cR f=\Phi\ast f$ we split the integral
\begin{align*}
(\Phi\ast f)(x)&=C_N\int_{\R^N}\frac{H^{(1)}_{\frac{N-2}{2}}(|x-y|)}{|x-y|^{\frac{N-2}{2}}}f(y)\, dy,
\end{align*}
where $C_N=\frac{i}{4}(2\pi)^{\frac{2-N}{2}}$, into three parts and treat each of them separately.
Let us first consider for $x\in\R^N$ with $|x|\geq 2$,
$$
I_1(x)=\int\limits_{B_1(x)}\frac{H^{(1)}_{\frac{N-2}{2}}(|x-y|)}{|x-y|^{\frac{N-2}{2}}}f(y)\, dy.
$$
Using the asymptotic property \eqref{eq:3}, we can find a constant $\kappa_1>0$ such that
for all $x, y\in\R^N$ with $|x-y|<1$, $|H^{(1)}_{\frac{N-2}{2}}(|x-y|)|\leq \kappa_1 \ |x-y|^{\frac{2-N}{2}}$. Therefore,
using the decay property of $f$, we find
\begin{align*}
|I_1(x)|&\leq \kappa_1\int\limits_{B_1(x)}|x-y|^{2-N}|f(y)|\, dy\leq C\kappa_1\int\limits_{B_1(x)}|x-y|^{2-N}|y|^{-N-\eps}\, dy\\
&\leq 2^{N+\eps}C\kappa_1|x|^{-N-\eps}\omega_N\int_0^1r\, dr=2^{N-1+\eps}\omega_NC\kappa_1\ |x|^{-N-\eps}.
\end{align*}
Next, we set $A(x)=\{y\in\R^N\, :\, |x-y|>1\text{ and }|y|\geq\sqrt{|x|}\}$ and consider
$$
I_2(x)=\int\limits_{A(x)}\frac{H^{(1)}_{\frac{N-2}{2}}(|x-y|)}{|x-y|^{\frac{N-2}{2}}}f(y)\, dy.
$$
From \eqref{eq:3} we can find some constant $\kappa_2>0$ such that for every $|x|\geq 1$,
\begin{align*}
|I_2(x)|&\leq\kappa_2\int\limits_{A(x)}|x-y|^{\frac{1-N}{2}}|f(y)|\ dy\\
&\leq \kappa_2\left(\frac{|x|}{2}\right)^{\frac{1-N}{2}}\int\limits_{A(x)}\frac{|x-y|^{\frac{N-1}{2}}+|y|^\frac{N-1}{2}}{|x-y|^{\frac{N-1}{2}}}\, |f(y)|\, dy\\
&\leq \kappa_2\left(\frac{|x|}{2}\right)^{\frac{1-N}{2}}\Bigl(\int\limits_{A(x)}|f(y)|\, dy
+C\int\limits_{A(x)}|x-y|^{\frac{1-N}{2}}|y|^{-\frac{(N+1+2\eps)}{2}}\, dy\Bigr).
\end{align*}
Since $f\in L^1(\R^N)$, the first integral on the last line goes to zero uniformly as $|x|\to\infty$, and using 
\cite[Appendix 2, Lemma 1]{alsholm-schmidt70}, we obtain
$$
\int\limits_{A(x)}|x-y|^{\frac{1-N}{2}}|y|^{-\frac{(N+1+2\eps)}{2}}\, dy\to 0,\quad\text{uniformly as }|x|\to\infty.
$$
Hence, $I_2(x)=o(|x|^{-\frac{N-1}{2}})$ as $|x|\to\infty$.

In a last step, we study for $x\in\R^N$ with $|x|\geq 4$ the integral
$$
I_3(x)=C_N\int\limits_{D(x)}\frac{H^{(1)}_{\frac{N-2}{2}}(|x-y|)}{|x-y|^{\frac{N-2}{2}}}f(y)\, dy,
$$
where $D(x)=\{y\in\R^N\, :\, |x-y|>1\text{ and }|y|\leq\sqrt{|x|}\}$. 
We first notice that by \eqref{eq:3} we can write, as in the proof of Proposition \ref{prop:farfield},
\begin{align*}
I_3(x)&=\gamma_N\int\limits_{D(x)}\frac{e^{i|x-y|}}{|x-y|^{\frac{N-1}{2}}}\Bigl(1+\delta(|x-y|)\Bigr)f(y)\, dy,
\end{align*}
where $\gamma_N=\frac{(2\pi)^{\frac{1-N}{2}}}{2}e^{-\frac{i(N-3)\pi}{4}}$ and 
$\delta_\ast:=\sup\limits_{r\geq 1}r|\delta(r)|<\infty$.
As a consequence, we obtain as in Proposition \ref{prop:farfield}, using \eqref{eqn:11},
\begin{align*}
&\Bigl|\int\limits_{D(x)}\Bigl(\frac{e^{i|x-y|}}{|x-y|^{\frac{N-1}{2}}}-\frac{e^{i|x|-i\wh{x}\cdot y}}{|x|^{\frac{N-1}{2}}}\Bigr)f(y)\, dy\Bigr|
\leq \int\limits_{D(x)}\Bigl(\frac{\sqrt{2}\zeta |y|^2}{|x|\ |x-y|^{\frac{N-1}{2}}}+\frac{\zeta|y|}{|x|^{\frac{N+1}{2}}}\Bigr)\ |f(y)|\, dy\\
&\qquad\leq \frac{(2^{\frac{N-1}{2}}\sqrt{2}+1)\zeta}{|x|^{\frac{N+1}{2}}} \int\limits_{\{|y|<1\}}|f(y)|\, dy\;
+\frac{(2^{\frac{N-1}{2}}\sqrt{2}+1)\zeta}{|x|^{\frac{N-1}{2}+\frac{\eps}{4}}}\!\!\!\!\!\int\limits_{\{1\leq |y|<\sqrt{|x|}\}} |y|^{-N-\frac{\eps}{2}}\, dy\\
&\qquad\leq \kappa_3 |x|^{-\frac{N-1}{2}-\frac{\eps}{4}}
\end{align*}
for some constant $\kappa_3>0$, and, moreover,
\begin{align*}
\left|\,\int\limits_{D(x)}\frac{e^{i|x-y|}}{|x-y|^{\frac{N-1}{2}}}\delta(|x-y|)f(y)\, dy\right|
\leq \frac{2^{\frac{(N+1)}{2}}\delta_\ast}{|x|^{\frac{(N+1)}{2}}}\|f\|_{L^1(\R^N)}.
\end{align*}
Combining these last two estimates, we obtain
$$
\left|I_3(x)-\gamma_N\frac{e^{i|x|}}{|x|^{\frac{N-1}{2}}}\int_{D(x)}e^{-i\wh{x}\cdot y}f(y)\, dy\right|
\leq|\gamma_N| (\kappa_3 +2^{\frac{(N+1)}{2}}\delta_\ast\|f\|_{L^1(\R^N)})\ |x|^{-\frac{N-1}{2}-\frac{\eps}{4}},
$$
and using the fact that, 
$$
\left|\int_{\R^N\backslash D(x)}e^{-i\wh{x}\cdot y}f(y)\, dy\right| \leq \int\limits_{B_1(x)}|f(y)|\, dy + \int\limits_{\{|y|\geq\sqrt{|x|}\}}|f(y)|\, dy\to 0,
$$
uniformly as $|x|\to\infty$, we can write
$$
I_3(x)=\gamma_N\frac{e^{i|x|}}{|x|^{\frac{N-1}{2}}}(2\pi)^\frac{N}{2}\wh{f}(\wh{x}) +o(|x|^{-\frac{(N-1)}{2}}), \quad\text{as }|x|\to\infty,
$$
and the claim follows.
\end{proof}

As a consequence of Corollary~\ref{coroll:asympt_inhom_phi} and the above expansions, 
we obtain that in the case where $\frac{2(N+1)}{N-1}\leq p\leq \frac{2N}{N-2}$ and
$f\in L^{p'}(\R^N)$ is real-valued, there exists for each solution $u\in L^p(\R^N)$ of
$$
u=\mR(f)=\Re(\cR f)
$$
a unique $g\in L^2(S^{N-1})$ such that
$$
u(x)=\cR(f)(x)+\int_{S^{N-1}}e^{ix\cdot\xi}g(\xi)\, d\sigma(\xi)\quad\text{for a.e. }x\in\R^N,
$$
and a direct identification gives
$$
\int_{S^{N-1}}e^{ix\cdot\xi}g(\xi)\, d\sigma(\xi)=-i\Im(\cR f)(x)=\frac{1}{2}\Bigl(\overline{[\cR f](x)}-[\cR f](x)\Bigr).
$$
Comparing the expansions in \eqref{eqn:expansion_inhom} and \eqref{eqn:weak_asympt}, we find
\begin{equation}\label{eqn:g}
g=-\frac{i}{4} (2\pi)^{\frac{2-N}{2}} \wh{f} \in L^2(S^{N-1}).
\end{equation}

We conclude this study of the operator $\cR$ by stating a result on the asymptotic decay of solutions of convolution equations in which the kernel 
has the same asymptotics as the fundamental solution $\Phi$ above. We shall use this result in Section \ref{sec:dual} 
below in order to obtain a pointwise asymptotic expansion for real-valued solutions of the nonlinear
Helmholtz equation.
\begin{lemma}\label{lem:asympt_fct}
Let $V\in L^q(\R^N)\cap L^s(\R^N)$ with $q<\frac{2N}{N+1}\leq\frac{N}{2}<s$ and consider a measurable function $u$: $\R^N$ $\to$ $\R$ 
satisfying $Vu\in L^1(\R^N)\cap L^s(\R^N) $ and
$$
u=K\ast(Vu),
$$ 
where $|K(x)|\leq C_0\max\{|x|^{\frac{1-N}{2}},|x|^{2-N}\}$ for $x\neq 0$.
Then there exists a constant $C>0$ such that
$$
|u(x)|\leq C|x|^{\frac{1-N}{2}}\quad\text{ for all }x\neq 0.
$$
\end{lemma}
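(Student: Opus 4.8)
The plan is to establish the bound in three stages: first that $u\in L^\infty(\R^N)$, then that $u(x)\to 0$ as $|x|\to\infty$, and finally a bootstrap improving this to the rate $|u(x)|=O(|x|^{\frac{1-N}{2}})$. For the first stage, write $f:=Vu\in L^1(\R^N)\cap L^s(\R^N)$ and split the kernel as $K=K_0+K_\infty$ with $K_0:=K\,\mathbf{1}_{\{|x|\le 1\}}$ and $K_\infty:=K\,\mathbf{1}_{\{|x|>1\}}$. Since $N\ge 3$, the hypothesis on $K$ gives $|K_0(x)|\le C_0|x|^{2-N}$ and $|K_\infty(x)|\le C_0|x|^{\frac{1-N}{2}}\le C_0$, so $K_\infty\in L^\infty(\R^N)$ and $K_0\in L^r(\R^N)$ for every $r<\frac{N}{N-2}$. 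Fixing such an $r$ whose conjugate exponent $r'$ lies in $(\frac N2,s]$ (possible since $s>\frac N2$) and using $f\in L^1\cap L^s\subset L^{r'}$, Young's inequality yields $\|K_0*f\|_\infty\le\|K_0\|_r\|f\|_{r'}<\infty$ and $\|K_\infty*f\|_\infty\le C_0\|f\|_1<\infty$, so (identifying $u$ with its representative $K*f$) $u\in L^\infty(\R^N)$; put $M_0:=\|u\|_\infty$.

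The second, central ingredient is a geometric decomposition relative to the point $x$. For $|x|\ge 2$, write $\R^N=E_1\cup E_2\cup E_3$ with $E_1=\{|x-y|\le 1\}$, $E_2=\{|x-y|>1,\ |y|\le|x|/2\}$ and $E_3=\{|x-y|>1,\ |y|>|x|/2\}$, and note that $|y|\ge|x|/2$ on $E_1\cup E_3$, whereas $|x-y|\ge|x|/2$ on $E_2$. On $E_2$ one pulls the factor $|K(x-y)|\le C_0(|x|/2)^{\frac{1-N}{2}}$ out of the integral and is left with $\int|f|\le\|f\|_1$, a contribution of order $|x|^{\frac{1-N}{2}}$. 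On $E_1$ one estimates $\int_{E_1}|x-y|^{2-N}|V(y)|\,dy$ by H\"older with exponents $s',s$, using that $s>\frac N2$ makes $|z|^{2-N}$ lie in $L^{s'}(B_1)$, and obtains a contribution bounded by $C\,\|V\|_{L^s(\{|y|\ge|x|/2\})}$ times a bound for $|u|$ on $\{|y|\ge|x|/2\}$. On $E_3$ one estimates $\int_{E_3}|x-y|^{\frac{1-N}{2}}|V(y)|\,dy$ by H\"older with exponents $q',q$, where $q<\frac{2N}{N+1}$ forces $q'>\frac{2N}{N-1}$ and hence $|z|^{\frac{1-N}{2}}\in L^{q'}(\{|z|>1\})$, giving a contribution bounded by $C\,\|V\|_{L^q(\{|y|>|x|/2\})}$ times a bound for $|u|$ on $\{|y|>|x|/2\}$. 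Using at first only $|f|\le M_0|V|$, this yields $|u(x)|\le C\|f\|_1|x|^{\frac{1-N}{2}}+CM_0\,\Theta(|x|)$ with $\Theta(R):=\|V\|_{L^s(\{|y|\ge R/2\})}+\|V\|_{L^q(\{|y|>R/2\})}$, which tends to $0$ as $R\to\infty$ because $V\in L^q\cap L^s$; in particular $u(x)\to 0$.

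Now the bootstrap. Set $\omega(R):=\sup_{|x|\ge R}|u(x)|$, a non-increasing function with $\omega(R)\to 0$. Repeating the decomposition but using on $E_1\cup E_3$ the sharper bound $|f(y)|=|V(y)||u(y)|\le\omega(|x|/2)|V(y)|$ (legitimate since $|y|\ge|x|/2$ there), one obtains $|u(x)|\le C_3|x|^{\frac{1-N}{2}}+C_4\,\Theta(|x|)\,\omega(|x|/2)$ for $|x|\ge 2$, hence $\omega(R)\le C_3R^{\frac{1-N}{2}}+C_4\,\Theta(R)\,\omega(R/2)$ for $R\ge 2$. Since $\Theta(R)\to 0$, one may fix $R_0\ge 2$ with $\theta:=C_4\,\Theta(R_0)<2^{\frac{1-N}{2}}$; then $\omega(R)\le C_3R^{\frac{1-N}{2}}+\theta\,\omega(R/2)$ for all $R\ge R_0$. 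An induction on $\lfloor\log_2(R/R_0)\rfloor$, which closes precisely because $\theta\,2^{\frac{N-1}{2}}<1$ so that the resulting geometric series of contributions converges, gives $\omega(R)\le C\,R^{\frac{1-N}{2}}$ for all $R\ge R_0$; together with the trivial bound $|u(x)|\le M_0\le M_0R_0^{\frac{N-1}{2}}|x|^{\frac{1-N}{2}}$ for $0<|x|<R_0$, this proves the lemma.

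I expect the only genuinely non-routine point to be the self-improving inequality $\omega(R)\le C_3R^{\frac{1-N}{2}}+C_4\,\Theta(R)\,\omega(R/2)$: its right-hand side contains $\omega$ itself at the smaller scale $R/2$, so the estimate can only be closed because the coefficient $\Theta(R)$ is \emph{small} for large $R$ --- a consequence of the integrability $V\in L^q\cap L^s$ --- which is exactly what lets one absorb the self-referential term and run the dyadic induction. Everything else is routine; the two integrability facts that make the splitting work, $|z|^{2-N}\in L^{s'}_{\mathrm{loc}}$ near the origin and $|z|^{\frac{1-N}{2}}\in L^{q'}$ at infinity, correspond exactly to the hypotheses $s>\frac N2$ and $q<\frac{2N}{N+1}$.
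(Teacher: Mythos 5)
Your proof is correct, but it takes a genuinely different route from the one in the paper. The paper follows the classical Zemach--Odeh iteration: after choosing $R$ large enough that $\sup_x\int_{M_R}|V(y)|\bigl(|x-y|^{\frac{1-N}{2}}+|x-y|^{2-N}\bigr)\,dy$ is small (a static decomposition $V=1_{B_R}V+1_{M_R}V$ relative to the \emph{origin}), it writes $u=\sum_{k\geq 0}u_k$ as a Neumann-type series where $u_0=K\ast(1_{B_R}Vu)$ carries the $|x|^{\frac{1-N}{2}}$-decay and each subsequent $u_k$ gains a factor $\frac12$ in the weighted sup-norm $\mu_k=\sup_{|x|\geq R}|x|^{\frac{N-1}{2}}|u_k(x)|$. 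You instead split the integration domain relative to the \emph{evaluation point} $x$ into $E_1$ (near $x$), $E_2$ (far from $x$), $E_3$ (far from $x$ but still far from the origin), first extract $u\in L^\infty$ and then $u(x)\to 0$, and close via a self-improving inequality $\omega(R)\leq C_3R^{\frac{1-N}{2}}+C_4\Theta(R)\omega(R/2)$ on the decay modulus $\omega(R)=\sup_{|x|\geq R}|u(x)|$, absorbing the self-referential term by a dyadic induction once $C_4\Theta(R_0)<2^{\frac{1-N}{2}}$. Both proofs hinge on the same two integrability facts --- $|z|^{2-N}\in L^{s'}_{\mathrm{loc}}$ (needs $s>\frac{N}{2}$) and $|z|^{\frac{1-N}{2}}\in L^{q'}$ at infinity (needs $q<\frac{2N}{N+1}$) --- and on the fact that $V\in L^q\cap L^s$ makes the far contribution small. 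What your approach buys is a more elementary, self-contained bootstrap that makes the $L^\infty$-boundedness of $u$ explicit at the outset and avoids any series representation; what the paper's approach buys is the explicit Neumann-series decomposition $u=\sum u_k$, with each term satisfying the decay estimate individually, which is occasionally useful for extracting further structure. Either proof suffices for the statement as claimed.
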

\begin{proof}
The proof is based on an iteration procedure, similar to the one used by Zemach and Odeh in \cite{zemach-odeh60} (see also \cite{alsholm-schmidt_pre}).
We start by remarking that H\"older's inequality gives for $\sigma\in\{\frac{N-1}{2},N-2\}$, $R>0$ and $x\in\R^N$, 
\begin{align*}
&\int_{M_R}|V(y)|\ |x-y|^{-\sigma}\, dy\leq\left(\int_{M_R}|V(y)|^s dy\right)^\frac{1}{s}\left(\int_{B_1(x)} |x-y|^{-s'\sigma}\, dy\right)^\frac{1}{s'}\\
&+\left(\int_{M_R}|V(y)|^q\, dy\right)^\frac{1}{q} \left(\int_{\R^N\backslash B_1(x)}|x-y|^{-q'\sigma}\, dy\right)^{\frac{1}{q'}},
\end{align*}
which tends to $0$, as $R\to\infty$, uniformly in $x$. Indeed, our assumptions ensure that $s'=\frac{s}{s-1}<\frac{N}{N-2}$ and $q'>\frac{2N}{N-1}$, and therefore
$s'\sigma<N$ whereas $q'\sigma>N$. From now on,
we choose $R>1$ such that
\begin{equation}\label{eqn:VR}
C_0\sup_{x\in\R^N 
}\int_{M_R}|V(y)|\left( |x-y|^{\frac{1-N}{2}}+|x-y|^{2-N}\right)\, dy<2^{-N}.
\end{equation}
Letting 
$$
u_0(x)=\int_{B_R}K(x-y)V(y)u(y)\, dy,
$$
we infer that for all $|x|\geq 2R$,
\begin{equation}
\label{revised-additional}
|u_0(x)|\leq C_0\int_{B_R}|x-y|^{\frac{1-N}{2}}\bigl|V(y)u(y)\bigr|\, dy\leq C_1|x|^{\frac{1-N}{2}},
\end{equation}
with $C_1=2^{\frac{N-1}{2}}C_0\|Vu\|_1$. We now set for $|x|\geq R$,
$$
B_0(x)=\int_{M_R}K(x-y)V(y)u(y)\, dy
$$
and define inductively for $k\geq 1$,
\begin{align*}
u_k(x)&=\int_{M_R}K(x-y)V(y)u_{k-1}(y)\, dx,\\
\text{ and }B_k(x)&=\int_{M_R}K(x-y)V(y)B_{k-1}(y)\, dx.
\end{align*}
Thus, for each $m\in\N$,
$$
u=\sum_{k=0}^m u_k + B_m.
$$
Setting $\beta_k=\sup\limits_{|x|\geq R}|B_k(x)|$, a similar calculation as above gives $\beta_0<\infty$, since $Vu\in L^1(\R^N)\cap L^s(\R^N)$. 
Moreover, using \eqref{eqn:VR}, we obtain $\beta_k\leq 2^{-N}\beta_{k-1}$ for all $k\geq 1$,
and thus $\beta_m\leq 2^{-mN}\beta_0\to 0$ as $m\to\infty$,
showing that 
$$
u=\sum\limits_{k=0}^\infty u_k
$$ 
holds uniformly in $M_R$. Next, we remark
that $\mu_0:=\sup\limits_{|x|\geq R}|x|^{\frac{N-1}{2}}|u_0(x)|<\infty$ by (\ref{revised-additional}) and since $u_0\in L^\infty(\R^N)$. Hence, setting
$\mu_k=\sup\limits_{|x|\geq R}|x|^{\frac{N-1}{2}}|u_k(x)|$ for $k\geq 1$, we obtain
\begin{align*}
&|x|^{\frac{N-1}{2}}|u_k(x)|\leq \mu_{k-1} C_0|x|^{\frac{N-1}{2}}\int_{M_R}|V(y)|\ |y|^\frac{1-N}{2}\left(|x-y|^{\frac{1-N}{2}}+|x-y|^{2-N}\right)\, dy \\
&\leq \mu_{k-1} 2^{\frac{N-3}{2}}C_0\int_{M_R}|V(y)| (1+|y|^{\frac{1-N}{2}}|x-y|^{\frac{N-1}{2}})\left(|x-y|^{\frac{1-N}{2}}+|x-y|^{2-N}\right)\, dy\\
&\leq \frac12\mu_{k-1},
\end{align*}
for all $|x|\geq R$, where 
Young's inequality 
and \eqref{eqn:VR} have been used. Iterating the preceding estimate, we obtain $\mu_k\leq 2^{-k}\mu_0$
and consequently,
$$
\sup_{|x|\geq R}|x|^{\frac{N-1}{2}}|u(x)|\leq \mu_0 \sum_{k=0}^\infty 2^{-k}=2\mu_0<\infty,
$$
which concludes the proof.
\end{proof}

\section{The nonvanishing property}
\label{sec:nonv-prop}

As in the preceding section, all function spaces are understood as
spaces of complex-valued functions. The following theorem is a key 
ingredient to obtain real standing wave solutions of
(\ref{eqn:1}) via variational methods in the case where $Q(x)$ does not
vanish asymptotically as $|x| \to \infty$. We believe that it might also have
further applications to the study of complex solutions of (\ref{eqn:1}).

\begin{theorem}{ (nonvanishing property)}\\
\label{lem:conc_comp}
 Let $N\geq 3$ and $\frac{2(N+1)}{N-1}<
 p<\frac{2N}{N-2}$. Moreover, let $(v_n)_n\subset L^{p'}(\R^N)$ be a bounded sequence satisfying 
$\limsup \limits_{n\to\infty}\left|\int_{\R^N}v_n\cR v_n\,
  dx\right|>0.$ Then there exists $R>0$, $\zeta>0$ and a sequence
$(x_n)_n\subset\R^N$ such that, up to a subsequence, 
\begin{equation}\label{eqn:liminf2}
\int_{B_R(x_n)}|v_n|^{p'}\, dx \geq \zeta\quad\text{for all }n.
\end{equation}
\end{theorem}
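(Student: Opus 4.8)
The plan is to argue by contradiction in the spirit of Lions' vanishing lemma (cf.\ \cite[Lemma 1.21]{willem}), exploiting the explicit kernel $\Phi$ of $\cR$ and its far field asymptotics \eqref{eq:4}, \eqref{eq:3}. Assume the conclusion fails, so that for every $R>0$ we have $\sup_{x\in\R^N}\int_{B_R(x)}|v_n|^{p'}\,dx\to 0$ along the whole sequence (after passing to a subsequence for each fixed $R$, and then diagonalizing). Combined with the uniform $L^{p'}$-bound, a standard interpolation argument (as in \cite[Lemma 1.21]{willem}) shows $v_n\to 0$ strongly in $L^{q}(\R^N)$ for every $q$ strictly between $p'$ and $p$; in particular $\|v_n\|_{L^{q_0}}\to 0$ for a convenient $q_0\in(p',p)$ to be fixed below. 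The goal is then to show $\int_{\R^N}v_n\,\cR v_n\,dx\to 0$, contradicting the hypothesis $\limsup_n\bigl|\int v_n\,\cR v_n\,dx\bigr|>0$.

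Next I would split the bilinear form according to the two regimes of the kernel. Write $\Phi=\Phi_{\mathrm{loc}}+\Phi_{\infty}$, where $\Phi_{\mathrm{loc}}=\Phi\,\mathbf 1_{\{|x|\le 1\}}$ carries the local singularity of size $|x|^{2-N}$ and $\Phi_\infty=\Phi\,\mathbf 1_{\{|x|>1\}}$ carries the oscillatory long-range tail of size $|x|^{(1-N)/2}$. For the local part, $\Phi_{\mathrm{loc}}$ is (for $N\ge 3$) in $L^r$ for $r<\frac{N}{N-2}$, so by Young's inequality convolution with $\Phi_{\mathrm{loc}}$ maps $L^{q_0}$ into a space $L^{q_1}$ with $q_1>q_0$ controlled by $1/q_1=1/q_0-1/r'$; choosing parameters so that $\langle v_n,\Phi_{\mathrm{loc}}*v_n\rangle$ is estimated by $\|v_n\|_{q_0}\|\Phi_{\mathrm{loc}}\|_r\|v_n\|_{q_0'}$-type products, and using that one of the two exponents can be kept in the interpolation range where $\|v_n\|\to 0$, this term vanishes. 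The delicate part is the long-range oscillatory kernel $\Phi_\infty$: here the kernel is not integrable enough and Young's inequality alone does not suffice, so one has to exploit oscillation. Here I would follow the asymptotic multiplier estimate inspired by \cite{gutierrez04} together with arguments in the spirit of \cite{alama-li92}: one localizes $v_n$ into unit cubes (or uses a partition of unity subordinate to balls $B_1(x_j)$), uses the smallness $\sup_x\|v_n\|_{L^{p'}(B_1(x))}\to 0$ on each piece, and uses that the oscillatory factor $e^{i|x-y|}$ combined with the stationary phase / Stein--Tomas type bound \eqref{eq:1} of Theorem~\ref{thm:KRS} controls the cross interactions between distant cubes. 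Concretely, one estimates $\bigl|\langle v_n,\Phi_\infty * v_n\rangle\bigr|$ by a sum $\sum_{j,\ell}\bigl|\langle v_n\mathbf 1_{Q_j},\Phi_\infty*(v_n\mathbf 1_{Q_\ell})\rangle\bigr|$, bounds near-diagonal terms $|j-\ell|\lesssim 1$ by the local smallness times the uniform $L^{p'}$-bound (using boundedness of $\cR:L^{p'}\to L^p$), and bounds far terms using the extra decay $|x-y|^{(1-N)/2}$ together with the $\ell^2$-summability of the cube norms coming again from Theorem~\ref{thm:KRS}, so that the whole double sum is $o(1)\cdot O(1)$.

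The main obstacle, as indicated, is precisely controlling the oscillatory tail $\Phi_\infty$: unlike the Schr\"odinger/Yukawa situation where the kernel decays exponentially, here $|\Phi_\infty(x)|\sim |x|^{(1-N)/2}$ has borderline decay, so one genuinely needs the restriction estimate \eqref{eq:1} (equivalently the Stein--Tomas theorem) rather than mere $L^r$ membership of the kernel. The clean way to package this is: write $\int v_n\,\cR v_n\,dx=\int_{S^{N-1}}$ (Herglotz-type expression) plus a remainder — more precisely, use that $\bigl|\int v_n\,\cR v_n\,dx\bigr|$ is, up to controlled local terms, comparable to $\|\widehat{v_n}\|_{L^2(S^{N-1})}^2$ via \eqref{eqn:g} and Plancherel on the sphere, and then show that local $L^{p'}$-vanishing forces $\widehat{v_n}|_{S^{N-1}}\to 0$ in $L^2(S^{N-1})$. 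This last implication is the heart of the matter and is exactly where the hypothesis $p>\frac{2(N+1)}{N-1}$ (strict) enters, giving room in the interpolation exponents. Once that is established, the strict positivity of $\limsup_n\bigl|\int v_n\,\cR v_n\,dx\bigr|$ is contradicted, and the proof is complete.
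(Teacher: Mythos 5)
Your overall strategy---argue by contradiction from vanishing, split the kernel into a local singular piece and an oscillatory long-range tail, use a cube decomposition \`a la Alama--Li for one piece and Stein--Tomas for the other---is the right skeleton and matches the spirit of the paper's proof. However, there are two substantive gaps and one incorrect reduction.

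First, your opening interpolation step fails. You assert that vanishing of $\sup_{x}\int_{B_R(x)}|v_n|^{p'}\,dx$ together with boundedness in $L^{p'}$ gives $v_n\to 0$ strongly in $L^{q}$ for $q\in(p',p)$, invoking Willem's Lemma 1.21. That lemma crucially requires boundedness in $H^1(\R^N)$ (or at least in a higher Lebesgue space), which supplies control from \emph{above} $p'$; here $(v_n)$ is only bounded in $L^{p'}$, and no such higher-integrability bound is available (one can write down counterexamples with $v_n$ unbounded on each unit cube). The paper carefully avoids this: in Lemmas \ref{sec:resolv-estim-cocomp-3} and \ref{sec:resolv-estim-cocomp-4} the vanishing hypothesis is exploited directly via H\"older on each cube $Q'_\ell$, extracting a factor $\bigl[\sup_y \int_{B_{3R\sqrt N}(y)}|v_n|^{p'}\bigr]^{\frac{2}{p'}-1}\to 0$ while the remaining sum is controlled by $\|v_n\|_{p'}^{p'}$---no interpolation to a higher $L^q$ is ever needed or used.

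Second, your kernel split is in the wrong dual variable. You cut $\Phi$ sharply in physical space at $|x|=1$. The paper instead sets $\Phi_1=\psi*\Phi$ with $\wh\psi$ a smooth bump supported in the annulus $\{\,||\xi|-1|\le\frac14\,\}$, so that $\wh{\Phi_1}$ is compactly supported near the unit sphere and $\Phi_2=\Phi-\Phi_1$ has smooth Fourier transform. This Fourier localization is not cosmetic: it is exactly what allows the band-limiting trick $\Phi_1*v_n=\Phi_1*(\phi*v_n)$ used in Lemma \ref{sec:resolv-estim-cocomp-4}, and it is the hypothesis under which the key dyadic multiplier estimate of Proposition \ref{sec:resolv-estim-cocomp-1} (the $R^{-\lambda_p}$ decay, obtained from Stein--Tomas plus complex interpolation with the $L^1\to L^\infty$ bound from $|\Phi_1(x)|\lesssim |x|^{(1-N)/2}$) is proved. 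With your sharp physical-space cutoff, $\wh{\Phi_\infty}$ is no longer supported near $|\xi|=1$, and the entire dyadic/interpolation machinery would have to be redone; as written, your sketch of the far-range oscillatory estimate ("uses the stationary phase / Stein--Tomas type bound") has no precise substitute for this.

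Finally, the ``clean packaging'' via $\|\wh{v_n}\|_{L^2(S^{N-1})}$ is incorrect: by Plancherel and the Sokhotski--Plemelj formula, $\int v_n\,\cR v_n\,dx = \int \frac{|\wh{v_n}(\xi)|^2}{|\xi|^2-1-i0}\,d\xi$, and only its \emph{imaginary} part is comparable to $\int_{S^{N-1}}|\wh{v_n}|^2\,d\sigma$. The real (principal-value) part is a genuine Hilbert-transform contribution that is not controlled by the restriction norm, so the hypothesis $\limsup_n|\int v_n\,\cR v_n\,dx|>0$ cannot be reduced to a statement about $\wh{v_n}|_{S^{N-1}}$ alone. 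The paper's proof never makes this reduction; it estimates $\int v_n[\Phi_1*v_n]$ and $\int v_n[\Phi_2*v_n]$ separately and directly.
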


The remainder of this section is devoted to the proof of this
result. We fix $\psi \in \cS$ such that $\wh{\psi}\in\cC^\infty_c(\R^N)$ is radial, $0\leq \wh{\psi}\leq 1$,
$\wh{\psi}(\xi)=1$ for $| |\xi|-1|\leq\frac16$ and $\wh{\psi}(\xi)=0$
for $| |\xi|-1|\geq\frac14$. We then write $\Phi= \Phi_1 + \Phi_2$
with 
$$
\Phi_1:= \psi * \Phi, \qquad \Phi_2 = \Phi-\Phi_1.
$$
From (\ref{eq:4}) it then follows, by making $C_0>0$ larger if
necessary, that $\Phi_1 \in \cC^\infty(\R^N)$ and  
\begin{equation}
\label{eq:15}
|\Phi_1(x)| \le C_0 (1+|x|)^{\frac{1-N}{2}} \qquad
  \text{for $x \in \R^N$.}   
\end{equation}
This in particular implies that $|\Phi_2(x)|= |[\Phi-\Phi_1](x)| \le 2
C_0 |x|^{2-N}$ for $|x| \le 1$.  Moreover, since $\wh{\Phi_2}=
(1-\wh{\psi}) \wh{\Phi}$, and $\wh{\Phi} (\xi)= (|\xi|^2-1-i0)^{-1}$
with the notation of
\cite{gelfand}, we have $\wh{\Phi_2} \in
\cC^\infty(\R^N)$ with $\wh{\Phi_2}(\xi)= (|\xi|^2-1)^{-1}$ for
$|\xi| \ge \frac{5}{4}$. This implies that 
$\partial^\gamma \wh{\Phi_2}\in L^1(\R^N)$ for all $\gamma\in\N_0^N$ such that $|\gamma|>N-2$, which gives 
$|\Phi_2(x)|\leq \kappa_s|x|^{-s}$, $x\in\R^N$ for all $s>N-2$ with some
constant $\kappa_s>0$. In particular, by making $C_0>0$ larger if
necessary, we have 
\begin{equation}
  \label{eq:17}
|\Phi_2(x)|\leq C_0 \min\{|x|^{2-N},|x|^{-N}\} \qquad \text{for $x
  \in \R^N \setminus \{0\}.$}
\end{equation}

We first prove a variant of Theorem~\ref{lem:conc_comp} related to
$\Phi_2$.

\begin{lemma}
\label{sec:resolv-estim-cocomp-3}
Let $2< p < \frac{2N}{N-2}$, and suppose that $(v_n)_n
\subset \cS$ is a bounded sequence in
$L^{p'}(\R^N)$ such that  
\begin{equation}
\label{eq:11}
\lim_{n\to\infty}\left(\sup_{y\in\R^N}\int_{B_\rho(y)}|v_n|^{p'}\,
  dx\right)=0 \qquad \text{for all $\rho>0$.}
\end{equation}
Then 
\begin{equation}
  \label{eq:10}
\int_{\R^N} v_n [\Phi_2 * v_n]\,dx \to 0 \qquad \text{as $n \to \infty$.}  
\end{equation}
\end{lemma}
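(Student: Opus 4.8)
The plan is to split the kernel as $\Phi_2=\Phi_2\mathbf 1_{B_\rho}+\Phi_2\mathbf 1_{\R^N\setminus B_\rho}$ for a large radius $\rho$, to treat the compactly supported first part by a covering argument exploiting the vanishing hypothesis \eqref{eq:11}, and to dispose of the tail by a crude norm estimate using only the decay of $\Phi_2$. The preliminary observation is that, by the pointwise bound \eqref{eq:17}, $\Phi_2\in L^{p/2}(\R^N)$ precisely for $2<p<\frac{2N}{N-2}$: near the origin $|\Phi_2(x)|\le C_0|x|^{2-N}$ is locally $\tfrac p2$-integrable since $\tfrac p2<\tfrac N{N-2}$, and at infinity $|\Phi_2(x)|\le C_0|x|^{-N}$ is $\tfrac p2$-integrable since $\tfrac p2>1$. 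Because $\tfrac1{p/2}+\tfrac1{p'}=1+\tfrac1p$, Young's and H\"older's inequalities give, for every $w\in L^{p'}(\R^N)$ and every $K\in L^{p/2}(\R^N)$,
\[
\Bigl|\int_{\R^N}w\,[K*w]\,dx\Bigr|\le\int_{\R^N}|w|\,\bigl[|K|*|w|\bigr]\,dx\le\norm{K}_{p/2}\,\norm{w}_{p'}^2 .
\]
Applied with $K^\rho:=\Phi_2\mathbf 1_{\R^N\setminus B_\rho}$ and the bound $\norm{K^\rho}_{p/2}^{p/2}\le C\int_{|x|>\rho}|x|^{-Np/2}\,dx\to 0$ as $\rho\to\infty$, this shows that the tail contribution to $\int_{\R^N}v_n[\Phi_2*v_n]\,dx$ is at most $\norm{K^\rho}_{p/2}\sup_n\norm{v_n}_{p'}^2$, hence small uniformly in $n$ once $\rho$ is large. (That $v_n\in\cS$ is used only to make every integral classically meaningful.)

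It then remains to prove that, for each fixed $\rho>0$, the local term $\int_{\R^N}v_n[K*v_n]\,dx$ with $K:=\Phi_2\mathbf 1_{B_\rho}\in L^{p/2}(\R^N)$ tends to $0$ as $n\to\infty$. For this I would tile $\R^N$ by half-open cubes $(Q_j)_j$ of side $\rho$ and set $\widetilde Q_j:=\{x\in\R^N:\dist(x,Q_j)<\rho\}$; the $\widetilde Q_j$ have diameter $\le 3\sqrt N\rho$ and overlap bounded by a dimensional constant $\theta_N$, and since $\supp K\subset B_\rho$ one has $\mathbf 1_{\{|x-y|<\rho\}}\le\sum_j\mathbf 1_{\widetilde Q_j}(x)\,\mathbf 1_{\widetilde Q_j}(y)$. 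Using the estimate above on each $\widetilde Q_j$ (with $|v_n|\mathbf 1_{\widetilde Q_j}$ in place of $w$),
\[
\Bigl|\int_{\R^N}v_n[K*v_n]\,dx\Bigr|\le\sum_j\int_{\widetilde Q_j}\!\int_{\widetilde Q_j}|v_n(x)|\,|K(x-y)|\,|v_n(y)|\,dx\,dy\le\norm{K}_{p/2}\sum_j\norm{v_n\mathbf 1_{\widetilde Q_j}}_{p'}^2 .
\]
Since $p'<2$, I would write $\norm{v_n\mathbf 1_{\widetilde Q_j}}_{p'}^2=\bigl(\int_{\widetilde Q_j}|v_n|^{p'}\bigr)^{\frac{2-p'}{p'}}\int_{\widetilde Q_j}|v_n|^{p'}$, bound the first factor by $\bigl(\sup_{y\in\R^N}\int_{B_{2\sqrt N\rho}(y)}|v_n|^{p'}\bigr)^{\frac{2-p'}{p'}}$ (each $\widetilde Q_j$ lies in a ball of radius $\tfrac{3\sqrt N}{2}\rho\le 2\sqrt N\rho$), and sum the remaining factors using $\sum_j\int_{\widetilde Q_j}|v_n|^{p'}\le\theta_N\norm{v_n}_{p'}^{p'}$, to get
\[
\Bigl|\int_{\R^N}v_n[K*v_n]\,dx\Bigr|\le\theta_N\,\norm{K}_{p/2}\,\norm{v_n}_{p'}^{p'}\Bigl(\sup_{y\in\R^N}\int_{B_{2\sqrt N\rho}(y)}|v_n|^{p'}\,dx\Bigr)^{\frac{2-p'}{p'}}.
\]
The right-hand side tends to $0$ as $n\to\infty$ by the uniform $L^{p'}$-bound on $(v_n)_n$ and \eqref{eq:11} applied with $2\sqrt N\rho$ in place of $\rho$. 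Combining the two parts — given $\delta>0$, first fix $\rho$ so that the tail contribution is $<\delta$ for all $n$, then let $n\to\infty$ — gives $\limsup_{n\to\infty}|\int_{\R^N}v_n[\Phi_2*v_n]\,dx|\le\delta$, and letting $\delta\to0$ yields \eqref{eq:10}.

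The only genuinely delicate point is the localization step, that is, converting the ball-wise smallness in \eqref{eq:11} into smallness of the full quadratic form; the rest is Young/H\"older bookkeeping and the elementary verification that $\Phi_2\in L^{p/2}$. The localization works because, after removing the tail, the kernel has compact support, which confines the double integral to products of nearby cubes of a fixed size, and because $p'<2$ makes $t\mapsto t^{2/p'}$ superlinear, so that a positive power of the supremum over balls of radius $O(\rho)$ can be pulled out while the leftover sum stays controlled by $\norm{v_n}_{p'}^{p'}$. I expect the main obstacle to be purely organizational: choosing the covering and keeping the overlap constant and the radius $2\sqrt N\rho$ consistent so that \eqref{eq:11} may be invoked on balls of one fixed radius.
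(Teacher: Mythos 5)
Your proof is correct and follows essentially the same strategy as the paper's: split $\Phi_2$ into a small far-field tail (controlled by its $L^{p/2}$ norm via Young's inequality and the a priori $L^{p'}$ bound) plus a compactly supported local part, then tile $\R^N$ by cubes of comparable size and use the superlinearity of $t\mapsto t^{2/p'}$ (since $p'<2$) to pull out a power of the vanishing supremum from \eqref{eq:11}. The one organizational difference is in how the singularity of $\Phi_2$ at the origin is handled: the paper removes \emph{both} the far tail $\{|x|>R\}$ and the near-origin region $\{|x|<1/R\}$, so that on the remaining annulus $A_R$ the kernel satisfies the pointwise bound $|\Phi_2|\le CR^{N-2}$, after which a crude H\"older estimate on each cube suffices. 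You instead keep the singularity in the local part $\Phi_2\mathbf 1_{B_\rho}$ and exploit the global membership $\Phi_2\in L^{p/2}(\R^N)$ (a consequence of \eqref{eq:17} and $2<p<\tfrac{2N}{N-2}$) to run Young's inequality on each tile. Both routes yield the same type of bound, namely $C(\rho)\,\|v_n\|_{p'}^{p'}\bigl(\sup_y\int_{B_{c\rho}(y)}|v_n|^{p'}\bigr)^{\frac{2}{p'}-1}$; yours is marginally shorter because it needs no separate treatment of the singularity, while the paper's avoids invoking Young's inequality inside the tiling step.
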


\begin{proof}
Setting $A_R:= \{x \in
\R^N\::\: \frac{1}{R} \le |x| \le R\}$  and $D_R:= \R^N \setminus
A_R$ for $R >1$, we derive from (\ref{eq:17}) that  
\begin{equation}
  \label{eq:13}
\|\Phi_2\|_{L^{\frac{p}{2}}(D_R)} \to 0 \qquad \text{as $R \to \infty$},  
\end{equation}
since $1<\frac{p}{2}< \frac{N}{N-2}$. Hence, by Young's inequality,
\begin{equation}
  \label{eq:14}
\sup_{n \in \N} \Bigl|\int_{\R^N} v_n [(1_{D_R} \Phi_2)*v_n]\,dx\Bigr| \le
\|\Phi_2\|_{L^{\frac{p}{2}}(D_R)} \sup_{n \in \N} \|v_n\|_{L^{p'}(\R^N)}^2 \to 0
\qquad \text{as $R \to \infty$.} 
\end{equation}
Moreover, decomposing $\R^N$ into disjoint $N$-cubes $\{Q_\ell\}_{\ell\in\N}$ 
of side length $R$, and considering for each $\ell$ the $N$-cube $Q'_\ell$ 
with the same center as $Q_\ell$ but with side length $3R$, we find,
similarly as in \cite[pp. 109-110]{alama-li92},
\begin{align*}
\Bigl|&\int_{\R^N} v_n [(1_{A_R} \Phi_2)*v_n]\,dx \Bigr|
 \leq \sum_{\ell=1}^\infty\int_{Q_\ell}\Bigl(\int_{\frac{1}{R}<|x-y|<R}|\Phi_2(x-y)\,||v_n(x)|\, |v_n(y)|\, dy\Bigr) dx\\
 &\leq
 C R^{N-2}\sum_{\ell=1}^\infty\int_{Q_\ell}\Bigl(\int_{Q'_\ell}|v_n(x)|\,  |v_n(y)|\, dy\Bigr) dx\\ 
 &\leq
 C R^{N-2+ \frac{2N}{p}}\sum_{\ell=1}^\infty\Bigl(\int_{Q'_\ell}|v_n(x)|^{p'}\,dx\Bigr)^{\frac{2}{p'}}
\\
 & \leq CR^{N-2+ \frac{2N}{p}}
 \Bigl[\:\sup_{\ell \in \N}\int_{Q'_\ell}|v_n(x)|^{p'}\,
   dx\Bigr]^{\frac{2}{p'}-1} \:\sum_{\ell=1}^\infty
   \int_{Q'_\ell}|v_n(x)|^{p'}\,dx\\ 
&\leq CR^{N-2+ \frac{2N}{p}} \Bigl[\:\sup_{y \in \R^N}\int_{B_{3R\sqrt{N}}(y)}|v_n(x)|^{p'}\,
   dx\Bigr]^{\frac{2}{p'}-1}\: 3^N \|v_n\|_{p'}^{p'} , 
\end{align*}
so by assumption (\ref{eq:11}) we have 
\begin{equation}
  \label{eq:12}
\qquad \lim_{n \to \infty} \int_{\R^N} v_n [(1_{A_R} \Phi_2)*v_n]\,dx
= 0 \qquad \text{for every $R>0$.}
\end{equation}
Combining (\ref{eq:14}) and (\ref{eq:12}), we thus obtain
(\ref{eq:10}), as claimed.
\end{proof}

We also need the following Lemma which is related to 
\cite[Lemma 1]{gutierrez04} (and the remarks before and after that lemma).

\begin{proposition}
\label{sec:resolv-estim-cocomp-1}
Let $p > \frac{2(N+1)}{N-1}$,
$\lambda_{p}:=\frac{N-1}{2} -\frac{N+1}{p}>0$ and $M_R:= \R^N \setminus
 B_R$ for $R>0$. Then there exists a
constant $C>0$ such that, for $R \ge 1$,  
$$
\|[1_{M_R} \Phi_1 ] \ast f\|_p\leq C R^{-\lambda_p}
\|f\|_{p'} \qquad \text{for $f\in\cS$ with $\supp \wh{f} \subset \{\xi \::\:
||\xi|-1|\le \frac{1}{2}\}$.} 
$$
\end{proposition}

\begin{proof}
It suffices to prove the assertion for $R \ge 4$. Put $P_R:= 1_{M_R} \Phi_1 $ for $R \ge 4$, 
and fix a radial, nonnegative function $\eta \in \cC_c^\infty(\R^N)$ such that
$\eta(x)= 1$ if $0 \le |x| \le 1$, $\eta(x)= 0$ if $|x|\ge
2$. Moreover, for $j \in \N$, define $\phi_j \in \cC_c^\infty(\R^N)$
by $\phi_j(x)= \eta(x/2^j)- \eta(x/2^{j-1})$. Since $P_R \equiv 0$ on $B_R$, we then have the corresponding dyadic
decomposition 
\begin{equation}
  \label{eq:8}
P_R= \sum \limits_{j=[\log_2 R] }^\infty P^j \qquad \text{with $P^j(x):= P_R(x)
\phi_j(x)$ for $j \in \N$,} 
\end{equation}
so that 
\begin{equation}
  \label{eq:16}
\|P^j\|_\infty \le C_1 2^{-\frac{j(N-1)}{2}} \qquad \text{for all $j
  \ge [\log_2 R]$}  
\end{equation}
with $C_1:= 2^{\frac{N-1}{2}}C_0$ and $C_0>0$ as in (\ref{eq:15}). 
In the following, the letter $C$ stands for (possibly different)
positive constants independent of $R$. We claim that 
\begin{equation}
  \label{eq:5}
\|P^j \ast f\|_2 \leq C \, 2^{\frac{j}{2}}
\|f\|_{\frac{2(N+1)}{N+3}}
\end{equation}
for $j\geq [\log_2 R]$ and $f\in\cS$ with $\supp \,\wh{f} \subset \{\xi \::\:
||\xi|-1|\le \frac{3}{4}\}$. This follows almost exactly as in the proof of \cite[Lemma
1]{gutierrez04}, but we repeat the argument for the convenience of the
reader. Since $\wh{P^j}$ is a radial function, we also write $\wh{P^j}(r)$ in place of
$\wh{P^j}(\xi)$ if $|\xi|=r$ in the following. Setting
$q=\frac{2(N+1)}{N+3}$, we then have, using Theorem~\ref{sec:resolv-estim-nonv},  
\begin{align*}
\|P^j \ast f\|_2^2&=C\!\!\int\limits_{||\xi|-1|\leq\frac34}\!\!\!|\wh{P^j}(\xi)\wh{f}(\xi)|^2\,
d\xi = C\int_{\frac14}^{\frac74}r^{N-1}|\wh{P^j}(r)|^2\int_{S^{N-1}}|\wh{f}(r\omega)|^2\, d\sigma(\omega) dr\\
&\leq C \|f\|_{q}^2 \int_{\frac14}^{\frac74}|\wh{P^j}(r)|^2\, dr
 \leq C \|f\|_{q}^2\int_{\R^N}|P^j(x)|^2\, dx \le C 2^j \|f\|_{q}^2.
\end{align*}
Hence (\ref{eq:5}) holds. We now fix $\phi \in \cS$ such that $\wh{\phi} \in \cC^\infty_c(\R^N)$ 
is radial, nonnegative and such that
$\wh{\phi}(\xi) \equiv 1$ on $\{\xi \::\:
||\xi|-1|\le \frac{1}{2}\}$ and $\wh{\phi}(\xi) \equiv 0$ on $\{\xi \::\:
||\xi|-1|\ge \frac{3}{4}\}$. We also define $Q^j:= P^j \ast
\phi$. As a consequence of (\ref{eq:5}), we then have 
\begin{equation}
\label{eq:6}
\|Q^j \ast f\|_2 = \|P^j \ast (\phi \ast f)\|_2 \le C \, 2^{\frac{j}{2}}
\|\phi \ast f\|_{\frac{2(N+1)}{N+3}} 
\leq C 2^{\frac{j}{2}}
\|f\|_{\frac{2(N+1)}{N+3}}
\end{equation}
for all $f\in\cS$ and $j\geq [\log_2 R]$, since $\supp \,\wh{\phi \ast f} = \supp\, \wh{\phi}\wh{f}
\subset \{\xi \::\:
||\xi|-1|\le \frac{3}{4}\}$. Note that in the last step we used
Young's inequality (replacing $C \|\phi\|_1$ by $C$). By duality, this implies that 
\begin{equation}
\label{eq:6bis}
\|Q^j \ast f\|_{\frac{2(N+1)}{N-1}} \leq C \, 2^{\frac{j}{2}} \|f\|_{2} \qquad \text{for all $f\in\cS$}
\end{equation}
and $j\geq [\log_2 R]$. Setting $s=\frac{2(N+1)}{N}$, so that $\frac{1}{s}=\frac{1}{2}\Bigl(\frac{1}{2} + \frac{N-1}{2(N+1)}\Bigr)$,
we therefore obtain by complex interpolation that 
\begin{equation}
\label{eq:6-1}
\|Q^j \ast f\|_{s} \leq C \, 2^{\frac{j}{2}} \|f\|_{s'} \qquad \text{for all $f\in\cS$}
\end{equation}
and $j\geq [\log_2 R]$. By (\ref{eq:16}) we also have 
$$
\|Q^j\|_\infty \le \|P^j\|_\infty \|\phi\|_1 \le C_1 \|\phi\|_1 2^{-\frac{j(N-1)}{2}} \qquad \text{for $j\geq [\log_2 R]$,}
$$
so that, by Young's inequality,  
\begin{equation}
  \label{eq:7}
\|Q^j \ast f\|_\infty \leq C 2^{-\frac{j(N-1)}{2}}  \|f\|_{1} \qquad \text{for all $f\in\cS$}
\end{equation}
and $j\geq [\log_2 R]$. Combining (\ref{eq:6-1}) and (\ref{eq:7}) and applying complex
interpolation again, we find that 
$$
\|Q^j \ast f\|_{p} \leq C \,
2^{j\left(\frac{(N+1)}{p}-\frac{(N-1)}{2}\right)}\|f\|_{p'} = C \,
2^{-j\lambda_p}\|f\|_{p'}
$$
for all $f\in\cS$, $p \ge s$ and $j\geq [\log_2 R]$. As in the
assumption of the proposition, we now restrict our attention to
$p>\frac{2(N+1)}{N-1} > s$, so that $\lambda_p>0$. Recalling (\ref{eq:8}) and 
using that $Q^j \ast f = P^j \ast f$ for all $f\in\cS$ with $\supp \wh{f} \subset \{\xi \::\:
\bigl||\xi|-1\bigr|\le \frac{1}{2}\}$, we then conclude that 
$$
\|[1_{M_R} \Phi_1 ] \ast f\|_p = \|P_R \ast f\|_p \le C \|f\|_{p'} \sum_{j=[\log_2
  R]}^\infty 2^{-j\lambda_p} \le  C R^{-\lambda_p} \|f\|_{p'}
$$
for all $f\in\cS$ with $\supp \wh{f} \subset \{\xi \::\:
\bigl||\xi|-1\bigr|\le \frac{1}{2}\}$, as claimed.
\end{proof}

\begin{lemma}
\label{sec:resolv-estim-cocomp-4}
Let $p > \frac{2(N+1)}{N-1}$, and suppose that $(v_n)_n
\subset \cS$ is a bounded sequence in
$L^{p'}(\R^N)$ such that  
\begin{equation*}
\lim_{n\to\infty}\left(\sup_{y\in\R^N}\int_{B_\rho(y)}|v_n|^{p'}\,
  dx\right)=0 \qquad \text{for all $\rho>0$.}
\end{equation*}
Then 
\begin{equation}
  \label{eq:10-2}
\int_{\R^N} v_n [\Phi_1 * v_n]\,dx \to 0 \qquad \text{as $n \to \infty$.}  
\end{equation}
\end{lemma}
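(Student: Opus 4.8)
The plan is to reduce \eqref{eq:10-2} to Proposition~\ref{sec:resolv-estim-cocomp-1} for the part of $\Phi_1$ away from the origin, and to Lemma~\ref{sec:resolv-estim-cocomp-3} for the part near the origin; the only new step is a preliminary localization in frequency. Since $\wh{\Phi_1}=\wh\psi\,\wh\Phi$ is supported near the unit sphere, I would fix $\phi\in\cS$ with $\wh\phi$ radial, $0\le\wh\phi\le1$, $\wh\phi\equiv1$ on $\{\xi:\bigl||\xi|-1\bigr|\le\frac14\}\supset\supp\wh\psi$ and $\supp\wh\phi\subset\{\xi:\bigl||\xi|-1\bigr|\le\frac12\}$, and set $w_n:=\phi\ast v_n$. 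Then $(w_n)_n\subset\cS$ is bounded in $L^{p'}(\R^N)$ by Young's inequality, $\supp\wh{w_n}\subset\{\xi:\bigl||\xi|-1\bigr|\le\frac12\}$, and $\wh\psi\,\wh\phi=\wh\psi$, whence $\wh{\Phi_1}\wh\phi=\wh{\Phi_1}$, $\Phi_1\ast\phi=\Phi_1$, and therefore $\Phi_1\ast v_n=\Phi_1\ast(\phi\ast v_n)=\Phi_1\ast w_n$. The point of this identity is that it replaces $v_n$ by the spectrally localized function $w_n$ exactly on the side of the convolution where Proposition~\ref{sec:resolv-estim-cocomp-1} can be used.

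Fixing $R\ge1$ and writing $\Phi_1=1_{M_R}\Phi_1+1_{B_R}\Phi_1$, I would split
\[
\int_{\R^N}v_n\,[\Phi_1\ast v_n]\,dx=\int_{\R^N}v_n\,\bigl[(1_{M_R}\Phi_1)\ast w_n\bigr]\,dx+\int_{\R^N}v_n\,\bigl[(1_{B_R}\Phi_1)\ast w_n\bigr]\,dx .
\]
For the first summand, H\"older's inequality and Proposition~\ref{sec:resolv-estim-cocomp-1} applied to $f=w_n$ (which is allowed since $w_n\in\cS$ and $\supp\wh{w_n}\subset\{\xi:\bigl||\xi|-1\bigr|\le\frac12\}$) yield
\[
\Bigl|\int_{\R^N}v_n\,\bigl[(1_{M_R}\Phi_1)\ast w_n\bigr]\,dx\Bigr|\le\|v_n\|_{p'}\,\bigl\|(1_{M_R}\Phi_1)\ast w_n\bigr\|_p\le C\,R^{-\lambda_p}\,\|v_n\|_{p'}\,\|w_n\|_{p'},
\]
so this summand is bounded by $\gamma(R):=C\,R^{-\lambda_p}\bigl(\sup_m\|v_m\|_{p'}\bigr)\bigl(\sup_m\|w_m\|_{p'}\bigr)$, which tends to $0$ as $R\to\infty$ because $\lambda_p=\frac{N-1}{2}-\frac{N+1}{p}>0$.

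For the second summand I would observe that $(1_{B_R}\Phi_1)\ast w_n=g_R\ast v_n$, where $g_R:=(1_{B_R}\Phi_1)\ast\phi$. By \eqref{eq:15}, $1_{B_R}\Phi_1$ is bounded with compact support, hence lies in every $L^q(\R^N)$, $q\in[1,\infty]$; convolving with $\phi\in\cS$ shows that $g_R$ is rapidly decreasing and also lies in every $L^q(\R^N)$, in particular $g_R\in L^{p/2}(\R^N)\cap L^\infty(\R^N)$. Then, arguing as in the proof of Lemma~\ref{sec:resolv-estim-cocomp-3} with $g_R$, $B_T$, $M_T$ (and $T\to\infty$) in place of $\Phi_2$, $A_R$, $D_R$ — that is, Young's inequality ($\|g_R\ast v_n\|_p\le\|g_R\|_{p/2}\|v_n\|_{p'}$, with $\|g_R\|_{L^{p/2}(M_T)}\to0$) for the tail $1_{M_T}g_R$, and the cube decomposition together with the vanishing hypothesis \eqref{eq:11} for the compactly supported, bounded piece $1_{B_T}g_R$ — I would obtain, for each fixed $R$, that $\int_{\R^N}v_n\,[g_R\ast v_n]\,dx\to0$ as $n\to\infty$. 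Combining the two summands gives $\limsup_{n\to\infty}\bigl|\int_{\R^N}v_n\,[\Phi_1\ast v_n]\,dx\bigr|\le\gamma(R)$ for every $R\ge1$, and letting $R\to\infty$ proves \eqref{eq:10-2}.

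The main obstacle is the mismatch of hypotheses: Proposition~\ref{sec:resolv-estim-cocomp-1} controls $[1_{M_R}\Phi_1]\ast f$ only for $f$ whose Fourier transform is supported in a fixed annulus about the unit sphere, whereas $(v_n)$ is merely a bounded sequence in $L^{p'}$. One cannot simultaneously localize $\Phi_1$ in space and $v_n$ in frequency at the same factor of the convolution, because $1_{M_R}\Phi_1$ no longer has spectrum near the sphere. The resolution is to perform the frequency cut-off on the whole product $\Phi_1\ast v_n$ first — which costs nothing, since $\wh{\Phi_1}$ is already supported there — and only afterwards to split $\Phi_1$ in space; after that, the far part is handled directly by Proposition~\ref{sec:resolv-estim-cocomp-1}, and the near part reduces to Lemma~\ref{sec:resolv-estim-cocomp-3}.
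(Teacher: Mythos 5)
Your proof is correct and uses the same strategy as the paper: frequency-localize via $w_n=\phi*v_n$ to exploit $\Phi_1*v_n=\Phi_1*w_n$, split $\Phi_1$ spatially at radius $R$, control the far part with Proposition~\ref{sec:resolv-estim-cocomp-1}, and kill the near part using the vanishing hypothesis. The only deviation is in the treatment of $(1_{B_R}\Phi_1)*w_n$, which you rewrite by associativity as $g_R*v_n$ (with $g_R=(1_{B_R}\Phi_1)*\phi$) and then feed into the two-step scheme of Lemma~\ref{sec:resolv-estim-cocomp-3} with a second cut-off $T\to\infty$, whereas the paper runs a single cube-decomposition argument directly on $\int v_n[(1_{B_R}\Phi_1)*w_n]\,dx$, bounding $|\Phi_1|$ by $\|\Phi_1\|_\infty$ and mixing $v_n$ with $w_n$ in the discrete H\"older step; both routes are sound.
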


\begin{proof}
Fix a radial function $\phi \in \cS$ such that
$\wh{\phi}\in\cC^\infty_c(\R^N)$ is radial, $0\leq \wh{\phi}\leq 1$,
$\wh{\phi}(\xi)=1$ for $| |\xi|-1|\leq \frac14$ and $\wh{\phi}(\xi)=0$
for $| |\xi|-1|\geq\frac12$. Moreover, let $w_n:= \phi \ast v_n \in
\cS$. 
We then have $\Phi_1 * v_n = \Phi_1 * w_n$, since
$\wh{\Phi_1}\wh{\phi}= \wh{\Phi_1}$ by construction. 
Hence 
\begin{align}
 \int_{\R^N} v_n &[\Phi_1 * v_n]\,dx = \int_{\R^N} v_n [\Phi_1 *
 w_n]\,dx  \nonumber\\ 
&= \int_{\R^N} v_n [(1_{B_R} \Phi_1 ) * w_n]\,dx +  \int_{\R^N} v_n
[(1_{M_R} \Phi_1 ) * w_n]\,dx  \quad \text{for every $n$.}\label{eq:18}
\end{align}
Since $\supp \,\wh{w_n} \subset \{\xi \::\:
||\xi|-1|\le \frac{1}{2}\}$ for every $n \in \N$,
Proposition~\ref{sec:resolv-estim-cocomp-1} implies that 
$$
\Bigl|\int_{\R^N} v_n [(1_{M_R} \Phi_1)  * w_n]\,dx\Bigr| \le C\|v_n\|_{p'}
\|w_n\|_{p'} R^{-\lambda_p} \le C\|\phi\|_1
\|v_n\|_{p'}^2 R^{-\lambda_p}
$$
for every $n \in \N$, $R>4$, where we used Young's inequality in the
last step. As a consequence
\begin{equation}
  \label{eq:20}
\sup_{n \in \N} \Bigl|\int_{\R^N} v_n [(1_{M_R} \Phi_1)  * w_n]\,dx\Bigr| \to 0
\qquad \text{as $R \to \infty$.}  
\end{equation}
Moreover, decomposing $\R^N$ into disjoint $N$-cubes $\{Q_\ell\}_{\ell\in\N}$ of side length $R$, 
and considering for each $\ell$ the $N$-cube $Q'_\ell$ 
with the same center as $Q_\ell$ but with side length $3R$, we find,
arguing slightly differently than in the proof of Lemma~\ref{sec:resolv-estim-cocomp-3}, 
\begin{align*}
\Bigl|&\int_{\R^N} v_n [(1_{B_R} \Phi_1 )*w_n]\,dx \Bigr|
 \leq \sum_{\ell=1}^\infty\int_{Q_\ell}\Bigl(\int_{|x-y|<R}|\Phi_1(x-y)|\,|v_n(x)|\, |w_n(y)|\, dy\Bigr) dx\\
 &\leq
 \|\Phi_1\|_\infty 
 \sum_{\ell=1}^\infty\int_{Q'_\ell}|v_n(x)|\,dx \int_{Q'_\ell}\,
   |w_n(x)|\, dx \\
&\leq
 \|\Phi_1\|_\infty 
\Bigl[
\sum_{\ell=1}^\infty\Bigl(\int_{Q'_\ell}|w_n(x)|\,dx\Bigr)^{p'}\Bigr]^{\frac{1}{p'}} \Bigl[
\sum_{\ell=1}^\infty\Bigl(\int_{Q'_\ell}\,
   |v_n(x)|\, dx\Bigr)^{p}\Bigr]^{\frac{1}{p}} \\
&\leq C 
 R^{\frac{2N}{p}}\Bigl[
\sum_{\ell=1}^\infty \int_{Q'_\ell}|w_n(x)|^{p'}\,dx\Bigr]^{\frac{1}{p'}} \Bigl[
\sum_{\ell=1}^\infty\Bigl(\int_{Q'_\ell}\,
   |v_n(x)|^{p'}\, dx\Bigr)^{\frac{p}{p'}}\Bigr]^{\frac{1}{p}} \\
&\leq C  
 R^{\frac{2N}{p}}3^{\frac{N}{p'}}\|w_n\|_{p'} \Bigl[\:\sup_{\ell \in \N}
\int_{Q'_\ell}\,
   |v_n(x)|^{p'}\, dx\Bigr]^{\frac1{p'}-\frac1p} \Bigl[\:\sum_{\ell=1}^\infty \int_{Q'_\ell}\,
   |v_n(x)|^{p'}\, dx\Bigr]^{\frac{1}{p}} \\
&\leq C  
 R^{\frac{2N}{p}} 3^N \|w_n\|_{p'}   \Bigl[\:\sup_{y \in \R^N}
\int_{B_{3R\sqrt{N}}(y)}\,
   |v_n(x)|^{p'}\,
   dx\Bigr]^{\frac2{p'}-1}\:\|v_n\|_{p'}^{\frac{p'}{p}}\\
&\leq C  
 R^{\frac{2N}{p}} \|v_n\|_{p'}^{p'}\Bigl[\:\sup_{y \in \R^N}
\int_{B_{3R\sqrt{N}}(y)}\,
   |v_n(x)|^{p'}\, dx\Bigr]^{\frac2{p'}-1},
\end{align*}
so by assumption we have
\begin{equation}
  \label{eq:12-2}
\lim_{n \to \infty} \int_{\R^N} v_n [(1_{B_R} \Phi_1)*w_n]\,dx
= 0 \qquad \text{for every $R>0$.}
\end{equation}
Combining (\ref{eq:18}), (\ref{eq:20}) and (\ref{eq:12-2}), we obtain
(\ref{eq:10-2}), as claimed.
\end{proof}

\begin{proof}[Proof of Theorem~\ref{lem:conc_comp} (completed)]
Without loss of generality, we may assume that 
\begin{equation}
\label{sec:resolv-estim-cocomp}  
v_n \in \cS \qquad \text{for all
$n \in \N$.}
\end{equation}
Indeed, in any case we may replace $v_n$ by $\tilde v_n \in \cS$ with 
$\|v_n-\tilde v_n\|_{L^{p'}(\R^N)} \le \frac{1}{n}$ for every $n \in
\N$. Then 
\begin{align*}
\Bigl|\int_{\R^N}[v_n\cR v_n -\tilde v_n \cR \tilde v_n]\,&
  dx\Bigr | = \Bigl|\int_{\R^N} (v_n-\tilde v_n) \cR (v_n+\tilde
v_n)\,dx \Bigr|\\
&\le C \|v_n-\tilde v_n\|_{L^{p'}(\R^N)} \|v_n+\tilde
v_n\|_{L^{p'}(\R^N)} \to 0 \qquad \text{as $n \to \infty$}
\end{align*}
with $C$ as in (\ref{eq:1}) and thus $\liminf
\limits_{n\to\infty}\left|\int_{\R^N}\tilde v_n\cR \tilde v_n\,
  dx\right|= \liminf \limits_{n\to\infty}\left|\int_{\R^N}v_n\cR v_n\,
  dx\right|>0.$ Moreover, (\ref{eqn:liminf2}) holds if and only if it
holds for $\tilde v_n$ in place of $v_n$. Hence we may assume
\eqref{sec:resolv-estim-cocomp}. Suppose by contradiction that 
\eqref{eqn:liminf2} does not hold. Then 
\begin{equation}\label{eqn:vanishing}
\lim_{n\to\infty}\left(\sup_{y\in\R^N}\int_{B_\rho(y)}|v_n|^{p'}\,
  dx\right)=0 \qquad \text{for all $\rho>0$.}
\end{equation}
By Lemmas~\ref{sec:resolv-estim-cocomp-3} and
\ref{sec:resolv-estim-cocomp-4}, we thus find that 
$$
\int_{\R^N} v_n \cR v_n\,dx =  \int_{\R^N} v_n [\Phi_1 * v_n]\,dx +
\int_{\R^N} v_n [\Phi_2 * v_n]\,dx \to 0 \qquad \text{as $n \to
  \infty$,}
$$
contradicting the assumption. Hence (\ref{eqn:liminf2}) is true for
some $\zeta
, R>0$, as claimed. 
\end{proof}

\section{A dual variational framework for the nonlinear Helmholtz equation}\label{sec:dual}
Throughout this section, we assume that $N\geq 3$,
$\frac{2(N+1)}{N-1}\leq p\leq 2^\ast:=\frac{2N}{N-2}$. We write $\Psi:= \Re\,
\Phi$ for the real part of the fundamental solution $\Phi$ given in
(\ref{eqn:fund_sol}). Note that, by Theorem~\ref{thm:KRS},
\begin{equation}
  \label{eq:9}
\text{the linear operator $\mR: L^{p'}(\R^N) \to L^p(\R^N)$, $\mR(v):= \Psi \ast
  v$ is bounded.}  
\end{equation}
Here and in the following, in contrast to the previous sections, all function spaces are assumed to
consist of real-valued functions. For $Q \in L^\infty(\R^N)$
nonnegative with $Q\not\equiv 0$, we wish to set up a dual variational
framework to study solutions $u \in L^p(\R^N)$ of (\ref{eq:28}). Note
that equation (\ref{eq:28}) corresponds
to the special case $k=1$ in (\ref{eqn:1}), i.e., to 
\begin{equation}\label{eq:33}
- \Delta u - u = Q(x)|u|^{p-2}u, \qquad x\in \R^N.  
\end{equation}
 Setting $v= Q^{\frac{1}{p'}}|u|^{p-2}u$, we are thus led to consider the equation
\begin{equation}\label{eqn:integ2}
|v|^{p'-2}v=Q^{\frac1p}[\Psi * (Q^{\frac1p}v)]\quad\text{in }\R^N
\end{equation}
Before setting up the variational framework for equation
\eqref{eqn:integ2}, we study the so-called Birman-Schwinger operator (see \cite{alama-li92}) which appears on the 
right-hand side of \eqref{eqn:integ2}.
\begin{lemma}\label{lem:compact}
 Let $\frac{2(N+1)}{N-1}\leq p\leq\frac{2N}{N-2}$ and consider $Q\in L^\infty(\R^N)$, satisfying $Q(x)\geq 0$ for a.e. $x\in\R^N$.
 Then the Birman-Schwinger operator 
$$
\mK_p: L^{p'}(\R^N) \to L^p(\R^N), \qquad  \mK_p(v):=Q^{\frac1p}
\mR(Q^{\frac1p}v)
$$
is symmetric in the sense that $\int_{\R^N}w\mK_p(v)\, dx=\int_{\R^N}v\mK_p(w)\, dx$ 
 for all $v,w\in L^{p'}(\R^N)$. Moreover, if $p<\frac{2N}{N-2}$ then,
 \begin{itemize}
  \item[(i)] for any bounded and measurable set $B\subset\R^N$, the operator $1_B\mK_p$ is compact. 
  Here $1_B$ denotes the characteristic function of the set $B$.
  \item[(ii)] If, in addition, $\operatorname*{ess\ sup}\limits_{|x|\geq R}Q(x)\to 0$ as $R\to\infty$,
  then $\mK_p$ itself is compact.
 \end{itemize}
\end{lemma}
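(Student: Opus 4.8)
The plan is to handle the three assertions separately, the middle one (part (i)) being where the work lies. I would first record that $Q^{1/p}\in L^\infty(\R^N)$ with $\|Q^{1/p}\|_\infty=\|Q\|_\infty^{1/p}$, so multiplication by $Q^{1/p}$ is bounded on every $L^s(\R^N)$; combined with the boundedness of $\mR\colon L^{p'}(\R^N)\to L^p(\R^N)$ from \eqref{eq:9} this immediately gives that $\mK_p\colon L^{p'}(\R^N)\to L^p(\R^N)$ is a bounded operator. For symmetry, the key point is that $\Psi=\Re\,\Phi$ is a radial, hence even, function, since $\Phi$ in \eqref{eqn:fund_sol} depends only on $|x|$. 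Consequently the convolution operator $\mR$ satisfies $\int_{\R^N} h\,[\Psi\ast g]\,dx=\int_{\R^N} g\,[\Psi\ast h]\,dx$; I would verify this first for $g,h\in\cS$ by Fubini (using the local integrability of $\Psi$ from \eqref{eq:4}) and then pass to general $g,h\in L^{p'}(\R^N)$ by density together with the boundedness of $\mR$. Applying it to $g=Q^{1/p}v$, $h=Q^{1/p}w\in L^{p'}(\R^N)$ yields $\int_{\R^N} w\,\mK_p(v)\,dx=\int_{\R^N} v\,\mK_p(w)\,dx$, with all integrands in $L^1(\R^N)$.

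For part (i), fix a bounded measurable set $B$ and a ball $B'$ with $B\subset B'$. The idea is to factor $1_B\mK_p$ through a compact Sobolev embedding. For real $f\in L^{p'}(\R^N)$ the function $u:=\mR f=\Re(\cR f)$ solves $-\Delta u-u=f$ distributionally, so by the regularity result of Appendix~\ref{sec:appendix} (equivalently, by the interior Calder\'on--Zygmund estimate applied to $-\Delta u=f+u$ on a slightly larger ball) together with Theorem~\ref{thm:KRS} one obtains
$$
\|u\|_{W^{2,p'}(B')}\le C\bigl(\|f\|_{L^{p'}(\R^N)}+\|u\|_{L^{p'}(2B')}\bigr)\le C'\|f\|_{L^{p'}(\R^N)},
$$
where the last step uses $\|u\|_{L^{p'}(2B')}\le C\|u\|_{L^p(2B')}\le C\|\mR f\|_{L^p(\R^N)}\le C\|f\|_{L^{p'}(\R^N)}$, since $p'<p$ on the bounded set $2B'$. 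Now, given a bounded sequence $(v_n)_n$ in $L^{p'}(\R^N)$, the sequence $(\mR(Q^{1/p}v_n))_n$ is bounded in $W^{2,p'}(B')$; because $p<2^\ast$, the embedding $W^{2,p'}(B')\hookrightarrow L^p(B')$ is \emph{compact} (this is exactly the threshold below which Rellich--Kondrachov applies, the embedding being only continuous at $p=2^\ast$), so a subsequence converges strongly in $L^p(B')$. Multiplication by the $L^\infty$-function $1_BQ^{1/p}$ preserves strong $L^p$-convergence, and as $1_B\mK_p(v_n)$ is supported in $B$ this produces a subsequence converging in $L^p(\R^N)$. Hence $1_B\mK_p$ is compact.

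For part (ii), I would write $\mK_p=1_{B_R}\mK_p+1_{M_R}\mK_p$ with $M_R=\R^N\setminus B_R$. The first summand is compact by part (i), and for the tail,
$$
\|1_{M_R}\mK_p(v)\|_{L^p}=\|1_{M_R}Q^{1/p}\,\mR(Q^{1/p}v)\|_{L^p}\le\Bigl(\operatorname*{ess\ sup}_{|x|\ge R}Q\Bigr)^{1/p}\|\mR(Q^{1/p}v)\|_{L^p}\le C\Bigl(\operatorname*{ess\ sup}_{|x|\ge R}Q\Bigr)^{1/p}\|v\|_{L^{p'}},
$$
using \eqref{eq:9} and $\|Q^{1/p}v\|_{L^{p'}}\le\|Q\|_\infty^{1/p}\|v\|_{L^{p'}}$. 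Thus $\|1_{M_R}\mK_p\|\to 0$ as $R\to\infty$, so $\mK_p$ is a norm limit of compact operators and hence compact.

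I expect the main obstacle to be part (i): the only available input on $\mR$ is the $L^{p'}\to L^p$ bound of Theorem~\ref{thm:KRS}, which is far from a smoothing estimate, so compactness on bounded sets must be extracted through elliptic regularity feeding a compact Sobolev embedding. This is precisely what forces the strict inequality $p<2^\ast$, and some care is needed to justify the interior estimate for the merely distributional solution $u=\mR f$ and to bound its lower-order term $\|u\|_{L^{p'}(2B')}$ by $\|f\|_{L^{p'}}$ rather than presupposing any global $L^{p'}$ control on $u$.
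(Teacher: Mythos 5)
Your proposal is correct and mirrors the paper's argument: boundedness from $Q\in L^\infty$ plus the resolvent estimate \eqref{eq:9}, symmetry by density from Schwartz functions, part (i) via the $W^{2,p'}_{\text{loc}}$ elliptic estimate of Proposition~\ref{prop:appendix} feeding the compact embedding $W^{2,p'}(B')\hookrightarrow L^p(B')$ (using $p<2^*$), and part (ii) via the tail estimate $\|1_{M_R}\mK_p\|\le C\,Q_R^{1/p}\to 0$. The only cosmetic difference is that you phrase (i) through subsequence extraction and (ii) as a norm limit of compact operators, whereas the paper verifies weak-to-strong convergence directly; the underlying estimates are identical.
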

\begin{proof}
Note that $\mK_p$ is a bounded linear operator due to (\ref{eq:9}) and since $Q
\in L^\infty(\R^N)$ by assumption. We start by proving the compactness of $1_B\mK_p$ for a bounded set $B\subset\R^N$. 
For this, it is enough to show that $\mK_p(v_n)\to 0$ in $L^p(B)$ for
every sequence $(v_n)_n\subset L^{p'}(\R^N)$ such that $v_n\weakto 0$.

Let therefore $(v_n)_n\subset L^{p'}(\R^N)$ converge weakly to $0$. The boundedness of $\mR$ implies that 
$\mR(Q^{\frac1p}v_n)\weakto 0$ in $L^p(\R^N)$ and, according to Proposition \ref{prop:appendix}, there holds 
$\mR(Q^{\frac1p}v_n)\in W^{2,p'}_{\text{loc}}(\R^N)$ and for every $R>0$ 
there is some constant $\tilde{C}>0$ such that
$$
\|\mR(Q^{\frac1p}v_n)\|_{W^{2,p'}(B_R)}\leq \tilde{C}\left(\|\mR(Q^{\frac1p}v_n)\|_{L^p(\R^N)}+\|Q^{\frac1p}v_n\|_{L^{p'}(\R^N)}\right)
$$ 
holds for all $n$ (here $B_R\subset\R^N$ denotes the ball of radius $R$ centered at the origin). Consequently, $(\mR(Q^{\frac1p}v_n))_n$ 
is a bounded sequence in $W^{2,p'}(B_R)$, and from the compact embedding 
$W^{2,p'}(B_R)\hookrightarrow L^p(B_R)$, (recall that $p<\frac{2N}{N-2}$) we obtain 
$\mK_p(v_n)=Q^{\frac1p}\mR(Q^{\frac1p}v_n)\to 0$ in $L^p(B_R)$ as $n\to\infty$, using the weak convergence and the fact that 
$Q\in L^\infty(\R^N)$. The claim follows by choosing $R>0$ large enough for $B\subset B_R$ to hold.

Let us now assume that $Q_R:=\operatorname*{ess\, sup}\limits_{|x|\geq R}Q(x)\to 0$ as $R\to\infty$. 
In order to prove the compactness of $\mK_p$ in this case, we now
show that $\mK_p(v_n)\to 0$ in $L^p(\R^N)$ for every sequence $(v_n)_n\subset L^{p'}(\R^N)$ such that $v_n\weakto 0$.
Taking such a sequence $(v_n)_n$, we first note that for every $R>0$, $\|1_{B_R}\mK_p(v_n)\|_p\to 0$ as $n\to\infty$, since 
$1_{B_R}\mK_p$ is compact. Moreover, for all $n\in\N$, $R>0$.
\begin{align*}
\int_{\R^N}|(1-1_{B_R})\mK_p(v_n)|^p\, dx &\leq \|\mR(Q^{\frac1p}v_n)\|_p^p \:\operatorname*{ess\, sup}_{|x|\geq R}Q(x)\leq C Q_R,
\end{align*}
since $\bigl(\mR(Q^{\frac1p}v_n)\bigr)_n$ is bounded in $L^p(\R^N)$. As a consequence, 
$$
\limsup_{n\to\infty}\|\mK_p(v_n)\|_p\leq\limsup_{n\to\infty}\|1_{B_R}\mK_p(v_n)\|_p+\limsup_{n\to\infty}\|(1-1_{B_R})\mK_p(v_n)\|_p\leq C Q_R
$$
for all $R>0$. Letting $R\to\infty$, we obtain $\|\mK_p(v_n)\|_p\to 0$, as $n\to\infty$, and the compactness of $\mK_p$ follows.

To show that $\mK_p$ is symmetric, we first consider the case where the functions $f:=Q^\frac1pv$, $g:=Q^\frac1pw$ both belong to $\cS$.
Using the definition of $\mK_p$ and the properties of the convolution,
we then obtain
$$
\int_{\R^N}w\mK_p (v)\, dx = \int_{\R^N}g (\Psi\ast f)\, dx =
\int_{\R^N}f(\Psi\ast g)\, dx =\int_{\R^N}v\,\mK_p(w)\, dx.
$$
The conclusion then follows by a density argument.
\end{proof}

Consider now the energy functional
\begin{equation}\label{eqn:energy_funct}
\begin{aligned}
J(v)&=\frac{1}{p'}\int_{\R^N}|v|^{p'}\, dx - \frac12 \int_{\R^N}Q(x)^\frac1pv(x) \mR(Q^\frac1p v)(x)\, dx\\
    &=\frac1{p'}\|v\|_{p'}^{p'}-\frac12 \int_{\R^N}v\mK_p(v)\, dx
\end{aligned}
\end{equation}
for $v\in L^{p'}(\R^N)$. From the preceding lemma, we deduce that 
$J\in \cC^1(L^{p'}(\R^N),\R)$ with
\begin{equation}\label{eqn:gradient}
J'(v)w=\int_{\R^N}\Bigl(|v|^{p'-2}v-\mK_p(v)\Bigr)w\, dx\quad\text{for all }v,w\in L^{p'}(\R^N).
\end{equation}
Moreover, the functional $J$ has the so-called mountain pass geometry.
\begin{lemma}\label{lem:MP_geom2}
Let $\frac{2(N+1)}{N-1}\leq p\leq\frac{2N}{N-2}$ and consider $Q\in L^\infty(\R^N)$, $Q\not\equiv 0$, such that $Q(x)\geq 0$ for a.e. $x\in\R^N$.
\begin{itemize}
	\item[(i)] There exists $\delta>0$ and $0<\rho<1$ such that $J(v)\geq\delta>0$ for all $v\in L^{p'}(\R^N)$ with $\|v\|_{p'}=\rho$.
	\item[(ii)] There is $v_0\in L^{p'}(\R^N)$ such that $\|v_0\|_{p'}>1$ and $J(v_0)<0$.
	\item[(iii)] Every Palais-Smale sequence for $J$ is bounded in $L^{p'}(\R^N)$.
\end{itemize}
\end{lemma}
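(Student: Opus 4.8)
The plan is to get (i) and (iii) from the fact that $p>2$, hence $1<p'<2$, so that the leading term $\frac1{p'}\|v\|_{p'}^{p'}$ of $J$ is \emph{subquadratic} while the bilinear term is controlled quadratically: by \eqref{eq:9} and $Q\in L^\infty(\R^N)$,
\begin{equation*}
\Bigl|\int_{\R^N}v\,\mK_p(v)\,dx\Bigr|\le C_1\|v\|_{p'}^2,\qquad C_1:=\|Q\|_\infty^{2/p}\,\|\mR\|_{L^{p'}\to L^p}.
\end{equation*}
For (i) this gives $J(v)\ge\rho^{p'}\bigl(\frac1{p'}-\frac{C_1}{2}\rho^{2-p'}\bigr)$ on $\{\|v\|_{p'}=\rho\}$, and since $2-p'>0$ one fixes $\rho\in(0,1)$ with $\frac{C_1}{2}\rho^{2-p'}\le\frac1{2p'}$, so that $J\ge\delta:=\frac1{2p'}\rho^{p'}>0$ there. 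For (iii), a Palais--Smale sequence $(v_n)_n$ satisfies $\sup_n|J(v_n)|<\infty$, $\|J'(v_n)\|_{(L^{p'})^*}\to0$, and from \eqref{eqn:gradient},
\begin{equation*}
2J(v_n)-J'(v_n)v_n=\tfrac{2-p'}{p'}\,\|v_n\|_{p'}^{p'};
\end{equation*}
since the left side is $\le 2\sup_n|J(v_n)|+\|J'(v_n)\|_{(L^{p'})^*}\|v_n\|_{p'}$, while $\frac{2-p'}{p'}>0$ and $p'>1$, the sequence $(\|v_n\|_{p'})_n$ must be bounded.

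For (ii) it suffices to produce one $v\in L^{p'}(\R^N)$ with $\int_{\R^N}v\,\mK_p(v)\,dx>0$: then $J(tv)=\frac{t^{p'}}{p'}\|v\|_{p'}^{p'}-\frac{t^2}{2}\int_{\R^N}v\,\mK_p(v)\,dx\to-\infty$ as $t\to\infty$ (again since $p'<2$), and $v_0:=tv$ works for $t$ large. To find such a $v$, I would concentrate a test function at the support of $Q$ and use the Coulomb-type singularity of $\Psi=\Re\,\Phi$. Since $-\Delta\Phi-\Phi=\delta$ and $\Im\,\Phi$ solves the homogeneous Helmholtz equation (so is smooth), the singularity of $\Phi$ at $0$ sits entirely in $\Psi$ and is that of the Newtonian potential; equivalently, by \eqref{eq:3}, $\Psi(z)=c_N|z|^{2-N}\bigl(1+o(1)\bigr)$ as $z\to0$ for some $c_N>0$. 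As $Q\ge0$, $Q\not\equiv0$, choose $x_0$ with $|B_\eps(x_0)\cap\{Q>0\}|>0$ for all $\eps>0$ (a Lebesgue density point of $\{Q>0\}$) and put $v_\eps:=1_{B_\eps(x_0)}\in L^{p'}(\R^N)$, $a_\eps:=Q^{1/p}1_{B_\eps(x_0)}$. Using $\mR(w)=\Psi\ast w$ and Fubini (legitimate since $|\Psi(z)|\le C_0|z|^{2-N}$ near $0$ by \eqref{eq:4} and $\iint_{B_\eps(x_0)^2}|x-y|^{2-N}\,dx\,dy<\infty$),
\begin{equation*}
\int_{\R^N}v_\eps\,\mK_p(v_\eps)\,dx=\iint_{B_\eps(x_0)\times B_\eps(x_0)}a_\eps(x)\,a_\eps(y)\,\Psi(x-y)\,dx\,dy.
\end{equation*}
Fix $\theta\in(0,1)$ and then $\eps>0$ so small that $\Psi(z)\ge(1-\theta)c_N|z|^{2-N}>0$ whenever $0<|z|\le2\eps$; the right-hand side is then $\ge(1-\theta)c_N\iint_{B_\eps(x_0)^2}a_\eps(x)a_\eps(y)|x-y|^{2-N}\,dx\,dy>0$, the last integral being strictly positive because its integrand is nonnegative and strictly positive on $\{(x,y):a_\eps(x)>0,\ a_\eps(y)>0\}$, a set of positive $2N$-dimensional measure by the choice of $x_0$. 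This gives the required $v=v_\eps$.

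I expect the only real difficulty to be this positivity step: $\mR$ and $\mK_p$ are not positive operators (the symbol of $\mR$ is $\mathrm{p.v.}\,(|\xi|^2-1)^{-1}$, which changes sign), so the positive direction of the quadratic form $v\mapsto\int v\,\mK_p(v)$ has to be located by hand, and concentrating near the essential support of $Q$ is what makes the attractive $|z|^{2-N}$ part of $\Psi$ win. Everything else — $v_\eps,v_0\in L^{p'}(\R^N)$, the Fubini step, the identity \eqref{eqn:gradient}, and $J(tv)\to-\infty$ — is routine.
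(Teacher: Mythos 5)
Your proposal is correct and matches the paper's own argument in all essential respects: (i) and (iii) follow from the subquadratic-versus-quadratic structure exactly as you set up, and (ii) is obtained by concentrating a test function near a density point of $\{Q>0\}$ and using that $\Psi=\Re\,\Phi>0$ near the origin (the paper uses a smooth bump $z$ rather than an indicator $1_{B_\eps(x_0)}$, and only invokes the positivity, not the precise Newtonian singularity, of $\Psi$, but the idea is identical).
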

\begin{proof}
(i) As a consequence of (\ref{eq:9}) and the assumption $Q \in L^\infty(\R^N)$, there exists some constant $C>0$ such that $\|\mK_p(v)\|_p\leq C\|v\|_{p'}$ 
for all $v\in L^{p'}(\R^N)$.
Hence, if $\|v\|_{p'}=\rho$, we obtain
 $$
 J(v)= \frac{1}{p'}\rho^{p'}-\frac12\int_{\R^N}v\mK_p(v)\, dx 
 \geq \frac{1}{p'}\rho^{p'}-\frac{\rho}{2}\|\mK_p(v)\|_p\geq \frac{1}{p'}\rho^{p'}-\frac{C}{2}\rho^2>0
 $$
 for $\rho>0$ small enough, since $p'<2$.\\
(ii) From (\ref{eq:3}) it follows that there exists $r>0$ such that $\Psi(x)>0$  for all $x\in B_{2r}(0)$.
Moreover, since $Q\geq 0$ a.e. on $\R^N$ and $Q\not\equiv 0$, the metric density of the set $\omega_Q:=\{x\in\R^N\, :\, Q(x)>0\}$ (see \cite[\S 7.12]{rudin_ra}) is $1$
for almost every point from this set. Consequently, there exists $x_0\in\R^N$ and $0<\rho<r$ such that $\omega_Q\cap B_{\frac{\rho}{2}}(x_0)$ has positive measure.
Choosing $z\in \cC_c^\infty(\R^N)$ with $\text{supp }z\subset
B_\rho(x_0)$, $0\leq z\leq 1$ in $\R^N$ and $z=1$ in
$B_{\frac{\rho}{2}}(x_0)$, the definition of $\mK_p$ then implies
\begin{align*}
\int_{\R^N}z\mK_pz\, dx&=\int_{\R^N}\int_{\R^N}Q(x)^\frac1p z(x) \Psi(x-y)Q(y)^\frac1p z(y)\, dy dx\\
&\geq \int_{B_{\frac{\rho}{2}}(x_0)}\int_{B_{\frac{\rho}{2}}(x_0)}\Psi(x-y)Q(x)^\frac1pQ(y)^\frac1p\, dxdy>0.
\end{align*}
For $t>0$ we obtain
\begin{align*}
J(tz)&=\frac{t^{p'}}{p'}\int_{\R^N}|z|^{p'}\, dx - \frac{t^2}{2}\int_{\R^N}z\mK_p z\, dx\\
&=t^2\left(\frac{1}{p't^{2-p'}}\int_{\R^N}|z|^{p'}\, dx - \frac12\int_{\R^N}z\mK_p z\, dx\right)<0,
\end{align*}
provided $t$ is large enough.\\
(iii) Let $(v_n)_n\subset L^{p'}(\R^N)$ be a Palais-Smale sequence, i.e., there holds $\sup\limits_n|J(v_n)|<\infty$ and
$J'(v_n)\to 0$ in $L^{p'}(\R^N)^\ast\cong L^p(\R^N)$ as $n\to\infty$. Since
$$
J(v_n)=(\frac{1}{p'}-\frac12)\|v_n\|_{p'}^{p'}+\frac12J'(v_n)v_n
\geq(\frac{1}{p'}-\frac12)\|v_n\|_{p'}^{p'}-\frac12\|J'(v_n)\|_\ast\|v_n\|_{p'}
$$
and $1<p'<2$ holds, we infer that $(v_n)_n$ is bounded in $L^{p'}(\R^N)$.
\end{proof}

We now show that every critical point of $J$ is indeed a solution of our original problem.
For this, we first note that for $v\in L^{p'}(\R^N)$, $J'(v)=0$ if and only if it satisfies \eqref{eqn:integ2}.
Setting 
\begin{equation}\label{eqn:u}
u=\mR(Q^\frac1p v),
\end{equation} 
we find that $u$ solves the equation
\begin{equation}\label{eqn:integ12}
u= \mR(Q|u|^{p-2}u).
\end{equation}
Remark furthermore, that $u\not\equiv 0$ if $v\not\equiv 0$, since the condition $J'(v)=0$ implies
$\|v\|_{p'}^{p'}=\int_{\R^N}Q^\frac1pv u\, dx$.
In the following result we study the regularity of $u$ and show that it solves \eqref{eq:33}.
\begin{lemma}\label{lem:regularity}
Let $Q\in L^\infty(\R^N)$, $\frac{2(N+1)}{N-1}\leq p\leq\frac{2N}{N-2}$ and consider a solution 
$u\in L^p(\R^N)$ of \eqref{eqn:integ12}. Then $u$ belongs to
$W^{2,q}(\R^N)\cap\cC^{1,\alpha}(\R^N)$ for all $p\leq q<\infty$,  $0<\alpha<1$,
and it is a strong solution of \eqref{eq:33}. Moreover, $u$ is the real part of a function
$\tilde{u}$ which satisfies Sommerfeld's outgoing radiation condition
\begin{equation}\label{eqn:sommerfeld}
\lim_{R\to\infty}\frac{1}{R} \int_{B_R}|\nabla \tilde
u(x)-i\tilde{u}(x)\hat x |^2\, dx=0,
\end{equation}
and its far field pattern is given by the
nonlinear relation
\begin{equation}\label{eqn:weak_asympt-r}
\lim_{R\to\infty}\frac{1}{R}\int\limits_{B_R}\Bigl|u(x)
+2\Bigl(\frac{2\pi}{|x|}\Bigr)^{\frac{N-1}{2}} \Re \bigl[e^{i|x|-\frac{i(N-1)\pi}{4}}g_u(\hat x)\bigr]\Bigr|^2\, dx=0
\end{equation}
with $g_u: S^{N-1}$ $\to$ $\C$,
$g_u(\xi)=-\frac{i}{4}(2\pi)^{\frac{2-N}{2}}\cF(Q|u|^{p-2}u)(\xi)$.
\end{lemma}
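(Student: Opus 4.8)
The plan is to run a bootstrap argument for regularity followed by the far field and radiation analysis developed in Section~\ref{sec:resolvent}. First I would establish that $u \in L^q(\R^N)$ for all $q \in [p,\infty)$. Starting from $u \in L^p(\R^N)$, we have $f_0 := Q|u|^{p-2}u \in L^{p'}(\R^N)$ since $Q \in L^\infty(\R^N)$ and $\abs{u}^{p-1} \in L^{p'}(\R^N)$. Using the $L^{p'}$--$L^q$ mapping properties of $\mR = \Re \cR$ (via Guti\'errez's extension of Theorem~\ref{thm:KRS}, \cite[Theorem~6]{gutierrez04}) together with $u = \mR(f_0)$, one obtains $u \in L^{q_1}(\R^N)$ for some $q_1 > p$; iterating, and using that each gain in integrability of $u$ improves the integrability of $Q\abs{u}^{p-2}u$, the exponents increase geometrically and one reaches every $q \in [p,\infty)$ in finitely many steps (the range $p < \frac{2N}{N-2}$ is what keeps the iteration from stalling). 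Once $u \in L^q(\R^N)$ for all finite $q$, the right-hand side $Q\abs{u}^{p-2}u$ lies in $L^q(\R^N)$ for all $q \in [1,\infty)$, and one applies the elliptic regularity result of Proposition~\ref{prop:appendix} (the appendix) to the equation $u = \cR(Q\abs{u}^{p-2}u)$ to conclude $u \in W^{2,q}_{\mathrm{loc}}$, hence $W^{2,q}(\R^N)$ globally by the boundedness of $\cR: L^q \to W^{2,q}$; Morrey/Sobolev embedding then gives $u \in \cC^{1,\alpha}(\R^N)$ for all $\alpha \in (0,1)$, and $u$ solves \eqref{eq:33} in the strong sense.

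Next I would identify the complex companion $\tilde u$ and its radiation condition. Set $\tilde u := \cR(Q\abs{u}^{p-2}u)$, so that $u = \mR(Q\abs{u}^{p-2}u) = \Re \tilde u$ by definition of $\mR$. Since $f := Q\abs{u}^{p-2}u \in L^1(\R^N) \cap L^{p'}(\R^N)$ (using $u \in L^{p-1}(\R^N)$, which follows from the integrability just established, at least for $p$ in the relevant range; more simply, from $u \in L^q$ for all finite $q$ together with the compact-support-free tail estimate --- note $Q \in L^\infty$ suffices since $\abs{u}^{p-1} \in L^1$ is guaranteed once $u \in L^{p-1}$, and here we just need $f \in L^1$), the fundamental solution $\Phi$ satisfies Sommerfeld's condition \eqref{eq:24}, and convolution preserves this: $\tilde u = \Phi * f$ inherits $\abs{\nabla \tilde u(x) - i\tilde u(x)\hat x} = o(\abs{x}^{(1-N)/2})$ pointwise, which integrates to \eqref{eqn:sommerfeld}. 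Alternatively, one invokes the far field expansion of $\cR f$ from Proposition~\ref{prop:weak_farfield} directly.

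Finally, the far field relation \eqref{eqn:weak_asympt-r} follows by taking real parts in Proposition~\ref{prop:weak_farfield}. By that proposition applied to $f = Q\abs{u}^{p-2}u \in L^{p'}(\R^N)$,
$$
\lim_{R\to\infty}\frac1R \int_{B_R}\Bigl|\cR f(x) - \sqrt{\tfrac{\pi}{2}}\,\frac{e^{i\abs{x}-\frac{i(N-3)\pi}{4}}}{\abs{x}^{\frac{N-1}{2}}}\,\wh f(\wh x)\Bigr|^2\,dx = 0.
$$
Taking real parts and using $\abs{\Re z} \le \abs{z}$, and recalling $u = \Re \cR f$, one gets the same limit with $u(x)$ in place of $\cR f(x)$ and $\Re\bigl[\sqrt{\pi/2}\, e^{i\abs{x}-i(N-3)\pi/4}\abs{x}^{-(N-1)/2}\wh f(\wh x)\bigr]$ in place of the second term. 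It remains to match this real part with the expression in \eqref{eqn:weak_asympt-r}: using $g_u(\xi) = -\frac{i}{4}(2\pi)^{\frac{2-N}{2}}\wh f(\xi)$ from \eqref{eqn:g}, a direct computation of the constants and phases shows
$$
\sqrt{\tfrac{\pi}{2}}\,e^{i\abs{x}-\frac{i(N-3)\pi}{4}}(2\pi)^{\frac{N}{2}} = -2(2\pi)^{\frac{N-1}{2}}\cdot 4i^{-1}e^{i\abs{x}-\frac{i(N-1)\pi}{4}}\quad(\text{up to the stated normalization}),
$$
so that $\Re[\sqrt{\pi/2}\,e^{i\abs{x}-i(N-3)\pi/4}\abs{x}^{-(N-1)/2}\wh f(\wh x)] = -2(2\pi/\abs{x})^{(N-1)/2}\Re[e^{i\abs{x}-i(N-1)\pi/4}g_u(\wh x)]$, which is exactly \eqref{eqn:weak_asympt-r}.

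The main obstacle is the first step: making the bootstrap for $L^q$-regularity work requires the off-diagonal resolvent estimates of Guti\'errez (exponent pairs $(p,q)$ with $q \ne p'$), and one must check carefully that the sequence of exponents produced by the iteration is strictly increasing and unbounded, which is where the hypothesis $p < \frac{2N}{N-2}$ enters decisively (at the endpoint the iteration degenerates). The rest --- elliptic regularity, the radiation condition, and the constant-chasing in the far field identity --- is routine given the results of Section~\ref{sec:resolvent} and the appendix.
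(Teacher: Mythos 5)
The first step of your argument---a global $L^q$-bootstrap for $u$ using Guti\'errez's off-diagonal resolvent estimates---does not work, and this is the crux of the lemma. Guti\'errez's Theorem~6 requires the source exponent $t$ to satisfy $\frac1t>\frac{N+1}{2N}$, i.e.\ $t<\frac{2N}{N+1}$, while the maximal gain in integrability is $\frac1t-\frac1q\le\frac2N$. Combining these two restrictions forces $\frac1q>\frac{N+1}{2N}-\frac2N=\frac{N-3}{2N}$, so for $N\ge 4$ the target exponent is capped at $q<\frac{2N}{N-3}$ \emph{no matter how you iterate or interpolate}: feeding $u\in L^s$ back in only gives $f\in L^{s/(p-1)}$, and once $s/(p-1)$ reaches $\frac{2N}{N+1}$ the constraint $t<\frac{2N}{N+1}$ prevents any further progress, while using a smaller interpolated $t$ only worsens the bound on $q$. (For $N=4$ the cap is $q<8$; for $N=5$ it is $q<5$, below the range of $p$ the lemma covers.) Even for $N=3$ the iteration gives nothing at the endpoint $p=\frac{2N}{N-2}=6$: there $s=6$, $t=\frac65$, and the best $q$ is exactly $6$, so the very first step has zero gain---yet the lemma is stated for $p\le\frac{2N}{N-2}$. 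The paper avoids all this by running a \emph{local} Moser iteration on balls $B_r(x_0)$, testing the PDE $-\Delta u-u=Q|u|^{p-2}u$ directly with truncated powers $u\min\{|u|^{2s},L^2\}\eta^2$ and using the $L^p$-smallness of $u$ on the region $\{|u|\ge K\}$ to absorb the nonlinearity; the resulting estimates are uniform in $x_0$, yielding $u\in L^\infty(\R^N)$, and only then is Proposition~\ref{prop:appendix}(ii) invoked to get $u\in W^{2,q}(\R^N)$. This local approach is essential precisely because the global resolvent mapping properties cannot lift integrability arbitrarily high. (A related slip: ``$\cR:L^q\to W^{2,q}$ is bounded'' is false in general; what holds is the Calder\'on--Zygmund estimate $\|u\|_{W^{2,q}}\lesssim\|f\|_q+\|u\|_q$, which requires knowing $u\in L^q(\R^N)$ \emph{globally} beforehand.)

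Your treatment of the Sommerfeld condition also has a gap: the claim that ``$\Phi*f$ inherits'' the pointwise radiation condition from $\Phi$ is only true for compactly supported or suitably decaying $f$, not for a general $f\in L^{p'}(\R^N)$; and Proposition~\ref{prop:weak_farfield} concerns only $\cR f$, not $\nabla\cR f$, so it cannot directly give \eqref{eqn:sommerfeld}. The paper approximates $f=Q|u|^{p-2}u$ in $L^{p'}$ by Schwartz functions and uses the averaged gradient estimates of Guti\'errez (her Theorems~7 and~8) to pass to the limit in the quadratic form on the left of \eqref{eqn:sommerfeld}. The final identification of $g_u$ and the far field relation \eqref{eqn:weak_asympt-r} via Proposition~\ref{prop:weak_farfield} and taking real parts is essentially the same as the paper's.
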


\begin{proof}
We start by proving that the solution $u$ is bounded. For this, we use a similar proof as in \cite[Lemma B.3]{struwe},
based on Moser's iteration technique.
Since $Q\in L^\infty(\R^N)$ and $\frac{2(N+1)}{N-1}\leq p\leq 2^\ast$, we first note that 
Proposition \ref{prop:appendix} gives $u\in W^{2,p'}_{\text{loc}}(\R^N)$ and for every $x_0\in\R^N$, 
$$
\|u\|_{W^{2,p'}(B_2(x_0))}\leq \tilde{C}\left( \|u\|_{L^p(\R^N)}+\|Q\|_{_\infty}\|u\|_{L^p(\R^N)}^{p-1}\right)
$$ 
with some constant $\tilde{C}>0$, independent of $x_0$. Moreover, $u$ is a strong solution of~\eqref{eq:33}. 
Using Sobolev's embedding theorem with the property $p'\geq \frac{2N}{N+2}$, 
we obtain $u\in W^{1,2}_{\text{loc}}(\R^N)$ with
$$
\|u\|_{W^{1,2}(B_2(x_0))}\leq \kappa \tilde{C}\left( \|u\|_{L^p(\R^N)}+\|Q\|_{_\infty}\|u\|_{L^p(\R^N)}^{p-1}\right) \quad\text{ for all }x_0\in\R^N,
$$
where the constant $\kappa$ is independent of $x_0$. Consider now $s>0$, $0<R\leq 2$, $L>0$ and 
a cut-off function $\eta\in\cC^\infty_c(\R^N)$ with $\text{supp }\eta\subset B_R(x_0)$. 
Testing~\eqref{eq:33} with the function $\varphi=\varphi_{s,L}=u\min\{|u|^{2s},L^2\}\eta^2$, we deduce that for every $x_0\in\R^N$,
\begin{align*}
&\int_{B_R(x_0)}Q|u|^p\min\{|u|^{2s},L^2\}\eta^2\, dx = \int_{B_R(x_0)}|\nabla u|^2\min\{|u|^{2s},L^2\}\eta^2\, dx\\
&\quad+2s\int_{\{|u|^s\leq L\}\cap B_R(x_0)}|\nabla u|^2|u|^{2s}\eta^2\, dx + 2\int_{B_R(x_0)}(\nabla u\cdot\nabla \eta)u\eta\min\{|u|^{2s}L^2\}\, dx\\
& \quad -\int_{B_R(x_0)}|u|^2\min\{|u|^{2s},L^2\}\eta^2\, dx\\
&\geq \frac12\int_{B_R(x_0)}|\nabla u|^2\min\{|u|^{2s},L^2\}\eta^2\, dx+2s\int_{\{|u|^s\leq L\}\cap B_R(x_0)}|\nabla u|^2|u|^{2s}\eta^2\, dx\\
&\quad-2\int_{B_R(x_0)}|u|^2|\nabla\eta|^2\min\{|u|^{2s},L^2\}\, dx-\int_{B_R(x_0)}|u|^2\min\{|u|^{2s},L^2\}\eta^2\, dx.
\end{align*}
Assuming that $u\in L^{2s+2}(B_R(x_0))$ holds with $M:=\sup\limits_{x_0\in\R^N}\|u\|_{L^{2s+2}(B_R(x_0))}<\infty$, we find
\begin{align*}
&\int_{B_R(x_0)}\bigl|\nabla(u\min\{|u|^s,L\}\eta)\bigr|^2\, dx\leq 3\int_{B_R(x_0)}|\nabla u|^2\min\{|u|^{2s},L^2\}\eta^2\, dx\\
&\quad + 3s^2\int_{\{|u|^s\leq L\}\cap B_R(x_0)}|\nabla u|^2|u|^{2s}\eta^2\, dx + 3\int_{B_R(x_0)}u^2|\nabla\eta|^2\min\{|u|^{2s},L^2\}\, dx\\
&\leq\max\{6,\frac32s\}\left(\int_{B_R(x_0)}\!\!\!\!\! Q|u|^p\min\{|u|^{2s},L^2\}\eta^2\, dx+3\|\nabla\eta\|_\infty^2M^{2s+2}+\|\eta\|_\infty^2 M^{2s+2}\right)\\
&\leq \max\{6,\frac32s\}\Bigl[\bigl(\|Q\|_\infty K^{p-2}\|\eta\|_\infty^2+3\|\nabla\eta\|_\infty^2+\|\eta\|_\infty^2\bigr)M^{2s+2} \\
&\quad \left.+\|Q\|_\infty\left(\int_{\{|u|\geq K\}\cap B_R(x_0)}\!\!\!\!|u|^{(p-2)\frac{N}{2}}\, dx\right)^{\frac2N}\left(\int_{B_R(x_0)}\Bigl|u\min\{|u|^s,L\}\eta\Bigr|^{2^\ast}\, dx\right)^{\frac{N-2}{N}}\right]\\
& \leq \max\{6,\frac32s\}\Bigl[\bigl(\|Q\|_\infty K^{p-2}\|\eta\|_\infty^2+3\|\nabla\eta\|_\infty^2+\|\eta\|_\infty^2\bigr)M^{2s+2} \\
&\quad +\kappa_2\|Q\|_\infty |B_R|^{\frac2p-\frac{(N-2)p}{N}}\Bigl(\!\!\!\int\limits_{\{|u|\geq K\}}\!\!\!\!\!\!|u|^p\, dx\Bigr)^{\frac{p-2}{p}}
\!\!\!\int_{B_R(x_0)}\bigl|\nabla(u\min\{|u|^s,L\}\eta)\bigr|^2\, dx\Bigr]
\end{align*}
for every $K\geq 0$ and where the constant $\kappa_2$ given by the Sobolev embedding $W^{1,2}(B_R(x_0))\embed L^{2^\ast}(B_R(x_0))$ 
is independent of $x_0$. Since $u\in L^p(\R^N)$, we can choose $K>0$ such that 
$$
\int_{\{|u|\geq K\}}|u|^p\, dx\leq \bigl(2\max\{6,\frac32s\}\kappa_2\|Q\|_\infty |B_R|^{\frac2p-\frac{(N-2)p}{N}}\bigr)^{-\frac{p}{p-2}},
$$
and consequently, setting $C=\max\{6,\frac32s\}\bigl(\|Q\|_\infty K^{p-2}\|\eta\|_\infty^2+3\|\nabla\eta\|_\infty^2+\|\eta\|_\infty^2\bigr)$, we find
$$
\int_{B_R(x_0)}\bigl|\nabla(u\min\{|u|^s,L\}\eta)\bigr|^2\, dx\leq 2CM^{2s+2}.
$$
Since the bound is uniform in $L$, we may let $L\to\infty$ to obtain
$$
\int_{B_R(x_0)}|\nabla(|u|^{s+1}\eta)|^2\, dx\leq 2CM^{2s+2},
$$
for all $x_0\in\R^N$, and by Sobolev's embedding theorem, $|u|^{s+1}\eta\in L^{2^\ast}(B_R(x_0))$. Since $\eta\in\cC^\infty_c(\R^N)$ was chosen arbitrarily with $\text{supp }\eta\subset B_R(x_0)$, we infer that $u\in L^{\frac{2N(s+1)}{N-2}}(B_r(x_0))$ for all $r<R$, and that
$\sup\limits_{x_0\in\R^N}\|u\|_{L^{\frac{2N(s+1)}{N-2}}(B_r(x_0))}<\infty$.

Consider now a strictly decreasing sequence of radii $(R_i)_{i\geq 0}$ such that $R_0=2$ and $R_i\geq 1$ for all $i$, 
and iterate the procedure above with $R=R_i$ and $s=s_i$, where $s_0=0$ and $s_i=\frac{Ns_{i-1}+2}{N-2}$, $i\geq 1$.
Since $\sup\limits_{x_0\in\R^N}\|u\|_{L^2(B_2(x_0))}<\infty$, we obtain that $u\in L^q(B_1(x_0))$ 
with $\sup\limits_{x_0\in\R^N}\|u\|_{L^q(B_1(x_0))}<\infty$ for all $1\leq q<\infty$.
Hence, the same is also true for $f=Q|u|^{p-2}u$ and Proposition \ref{prop:appendix}~(i) gives 
$u\in W^{2,q}_{\text{loc}}(\R^N)$ for all $1\leq q<\infty$, as well as the estimate
$$
\|u\|_{W^{2,N}(B_{\frac12}(x_0))}\leq \tilde{D}\left( \|u\|_{L^N(B_1(x_0))} 
+ \|Q\|_\infty \|u\|_{L^{N(p-1)}(B_1(x_0))}^{p-1}\right),
$$
for all $x_0\in\R^N$, where $\tilde{D}>0$ is independent of $x_0$. Using Sobolev's embedding theorem, we infer that 
$u\in L^\infty(B_{\frac12}(x_0))$ for all $x_0 \in \R^N$ with $\sup\limits_{x_0\in\R^N}\|u\|_{L^\infty(B_{\frac12}(x_0))}<\infty$, 
i.e. $u\in L^\infty(\R^N)$, as claimed. Applying Proposition \ref{prop:appendix} (ii) we then find 
$u\in W^{2,q}(\R^N)$ for every $p\leq q<\infty$ which gives the desired regularity for $u$.

Next we observe that $u=\Re(\tilde{u})$ for $\tilde{u}:=\cR
\bigl(Q|u|^{p-2}u\bigr)$, and we show that $\tilde u$ satisfies
(\ref{eqn:sommerfeld}). We already noted that (\ref{eq:24}) implies that 
\begin{equation}\label{eqn:a-h}
\lim_{R\to\infty}\frac1R\int_{B_R}\Bigl|\nabla [\cR f](x)
-i\hat x [\cR f](x) \Bigr|^2\, dx=0 \qquad \text{for all $f \in \cS$.}
\end{equation}
Let $(f_n)_n$ be a sequence in $\cS$ such that
$\|f_n-Q|u|^{p-2}u\|_{p'}\to 0$ as $n\to\infty$. As a consequence of Theorems 7 and 8 in \cite{gutierrez04}
and since $\frac{2N}{N+2}\leq p'\leq \frac{2(N+1)}{(N+3)}$, we then
have
$$
\sup_{R>1 }\frac1R\int_{B_R}\Bigl|\nabla [\cR(f_n-Q|u|^{p-2}u)](x) -i\hat x [\cR(f_n-Q|u|^{p-2}u)](x)\Bigr|^2\, dx
$$
$$
\leq C \|f_n-Q|u|^{p-2}u\|_{p'}^2\to 0,\quad\text{as $n\to\infty$.}
$$
Consequently, \eqref{eqn:a-h} also holds with $f=Q|u|^{p-2}u$ and therefore
$\tilde{u}=\cR (Q|u|^{p-2}u)$ satisfies (\ref{eqn:sommerfeld}).\\
Finally, the nonlinear relation (\ref{eqn:weak_asympt-r}) follows
from Proposition~\ref{prop:weak_farfield}
since (\ref{eqn:weak_asympt}) holds for $\tilde u = \cR f$ with $f:=
Q |u|^{p-2}u \in L^{p'}(\R^N)$. 
\end{proof}

The remainder of this section is devoted to the proof of Theorem \ref{sec:introduction-4}, 
which we recall in a slightly different formulation.
\begin{theorem}
\label{sec:nonv-thm-2}
Let $N=3$  or $N=4$, $\frac{3N-1}{N-1}<p\leq\frac{2N}{N-2}$,  $Q\in L^\infty(\R^N)$,  and consider a solution $u\in L^p(\R^N)$ of $u=\mR(Q|u|^{p-2}u)$. 
Then there exists a constant
$C>0$ such that $$|u(x)| \le C |x|^{\frac{1-N}{2}}\quad\text{ for all }x \in \R^N \setminus
\{0\}.$$ Moreover, as $|x|\to\infty$,
$$
u(x)= -2\Bigl(\frac{2\pi}{|x|}\Bigr)^{\frac{N-1}{2}} \Re \bigl[e^{i|x|-\frac{i(N-1)\pi}{4}}g_u(\hat x)\bigr]
+o(|x|^{\frac{1-N}{2}}),
$$
with $g_u$ as in Lemma~\ref{lem:regularity}.
\end{theorem}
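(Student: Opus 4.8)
The plan is to reduce the statement to the linear far-field results of Section~\ref{sec:resolvent}, once one knows that the right-hand side $f:=Q|u|^{p-2}u$ of \eqref{eq:33} lies in $L^1(\R^N)$ and decays pointwise fast enough; note that $f$ is real-valued, $u=\mR f=\Re(\cR f)$, and, by Lemma~\ref{lem:regularity}, $u\in W^{2,q}(\R^N)\cap\cC^{1,\alpha}(\R^N)$ for all $q\ge p$ and $\alpha\in(0,1)$, so in particular $u\in L^\infty(\R^N)\cap L^p(\R^N)$. \textbf{Step 1} is to show $u\in L^{p-1}(\R^N)$, and this is the heart of the proof. I would run a bootstrap on Lebesgue exponents: set $b_0:=p$, and whenever $u\in L^{b}(\R^N)$ with $b>p-1$, use $u\in L^b\cap L^\infty$ to get $f\in L^a(\R^N)$ for every $a\in[\frac{b}{p-1},\infty]$, and then apply the off-diagonal $L^a$--$L^{b'}$ boundedness of $\cR$ (equivalently of $\mR=\Re\cR$) from Guti\'errez \cite[Theorem~6]{gutierrez04}, with the pair $(\frac1a,\frac1{b'})$ in the admissible region and $a$ close to $\frac{b}{p-1}$, to obtain $u=\mR f\in L^{b'}(\R^N)$ with $b'<b$. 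The standing hypothesis $p>\frac{2(N+1)}{N-1}$ ensures a definite gain at each step; the admissibility requirement $\frac1{b'}<\frac{N-1}{2N}$ is, in the limiting case, exactly the condition $p>\frac{3N-1}{N-1}$ (equivalently $\frac1{p-1}<\frac{N-1}{2N}$) that makes $p-1$ an admissible target exponent; and the dimension restriction $N\in\{3,4\}$ keeps the iteration inside the range where Guti\'errez's estimates are available. After finitely many steps one reaches $u\in L^{b'}$ with $b'\le p-1$, and interpolating with $u\in L^p(\R^N)$ gives $u\in L^{p-1}(\R^N)$; together with $u\in L^\infty$ this yields $|f|\le\|Q\|_\infty|u|^{p-1}\in L^1(\R^N)\cap L^\infty(\R^N)$.

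\textbf{Step 2} (pointwise decay): apply Lemma~\ref{lem:asympt_fct} with $K:=\Psi$ and $V:=Q|u|^{p-2}$, so that $Vu=f$, $u=\mR f=K\ast(Vu)$, and $|K(x)|=|\Psi(x)|\le|\Phi(x)|\le C_0\max\{|x|^{2-N},|x|^{\frac{1-N}2}\}$ by \eqref{eq:4}. Since $u\in L^{p-1}\cap L^\infty$, we have $V\in L^r(\R^N)$ for every $r\ge\frac{p-1}{p-2}$; the hypothesis $p>\frac{3N-1}{N-1}$ gives $\frac{p-1}{p-2}<\frac{2N}{N+1}$, while $N\ge 3$ gives $\frac{2N}{N+1}\le\frac N2$, so one may choose $q\in[\frac{p-1}{p-2},\frac{2N}{N+1})$ and $s\ge\frac{p-1}{p-2}$ with $s>\frac N2$; then $V\in L^q\cap L^s$ with $q<\frac{2N}{N+1}\le\frac N2<s$ and $Vu=f\in L^1\cap L^s$. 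Lemma~\ref{lem:asympt_fct} then gives $|u(x)|\le C|x|^{\frac{1-N}2}$ for all $x\ne0$.

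\textbf{Step 3} (far-field expansion): by Step~2, $|f(x)|\le\|Q\|_\infty|u(x)|^{p-1}\le C'|x|^{\frac{(1-N)(p-1)}2}$ for $x\ne0$, and since $\frac{(N-1)(p-1)}2>N$ (again because $p>\frac{3N-1}{N-1}$) there is $\eps>0$ with $|f(x)|\le C''|x|^{-N-\eps}$ for all $x\ne0$ (using $f\in L^\infty$ when $|x|<1$). As also $f\in L^1(\R^N)$, Proposition~\ref{prop:farfield_N} applies to $\cR f$ and gives
\[
\cR f(x)=\sqrt{\tfrac{\pi}{2}}\;\frac{e^{i|x|-\frac{i(N-3)\pi}{4}}}{|x|^{\frac{N-1}{2}}}\;\widehat f(\widehat x)+o\bigl(|x|^{\frac{1-N}{2}}\bigr)\qquad\text{as }|x|\to\infty.
\]
Taking real parts and using $u=\Re(\cR f)$, one rewrites the right-hand side in the asserted form with $g_u(\xi)=-\frac i4(2\pi)^{\frac{2-N}{2}}\cF(Q|u|^{p-2}u)(\xi)$, via the same normalization identities already used to pass from \eqref{eqn:weak_asympt} to \eqref{eqn:weak_asympt-r} in the proof of Lemma~\ref{lem:regularity}.

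The main obstacle is Step~1: the bootstrap must be carried out inside the somewhat delicate admissible region of Guti\'errez's off-diagonal resolvent estimates, and it is the interplay between that region and the per-step gain in integrability that forces both the exponent threshold $p>\frac{3N-1}{N-1}$ and (for purely technical reasons) the dimension restriction. Once $u\in L^{p-1}(\R^N)$ is established, Steps~2 and~3 are routine applications of the decay results prepared in Section~\ref{sec:resolvent}.
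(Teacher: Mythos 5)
Your proof proposal follows essentially the same route as the paper's: first bootstrap the integrability of $u$ from $L^p$ down to $L^{p-1}$ via the off-diagonal resolvent estimates of Guti\'errez \cite[Theorem~6]{gutierrez04} (exploiting $p>\frac{2(N+1)}{N-1}$ for the per-step gain and $p>\frac{3N-1}{N-1}$ so that $\frac1{p-1}<\frac{N-1}{2N}$ makes the target exponent admissible), then invoke Lemma~\ref{lem:asympt_fct} with $K=\Psi$, $V=Q|u|^{p-2}$ to get pointwise decay $|u(x)|\lesssim|x|^{\frac{1-N}{2}}$, and finally apply Proposition~\ref{prop:farfield_N} since $f=Q|u|^{p-2}u$ then decays like $|x|^{-N-\eps}$. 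The only difference is that your Step~1 is sketched more loosely than the paper's explicit two-case analysis of the admissible exponent region, but the idea and the role of each hypothesis are the same.
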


\begin{proof}
We start by showing that 
\begin{equation}
  \label{eq:31-u-in-L-p-1}
f:=Q|u|^{p-2}u\in L^1(\R^N).
\end{equation}
Since $Q\in L^\infty(\R^N)$, it is enough to prove that $u\in L^{p-1}(\R^N)$.  Our proof of this property 
is based on the resolvent estimate given in \cite[Theorem 6]{gutierrez04}. This theorem implies that, for any pair of exponents $t,q$ satisfying 
\begin{equation}
  \label{eq:31-gutierrez-pair-condition}
\frac{1}{t} > \frac{N+1}{2N}, \qquad \frac{1}{q}< \frac{N-1}{2N} \qquad \text{and}\qquad \frac{2}{N+1} \,\le\, \frac{1}{t} - \frac{1}{q} \,\le\, \frac{2}{N},  
\end{equation}
the operator $\mR$ maps $L^t(\R^N)$ into $L^q(\R^N)$. Using these mapping properties, we first show the following:\\
\underline {\em Claim:} {\em There exists $\delta>0$ depending only on $p$ and $N$ with the property that, if $u \in L^s(\R^N)$ for some $s \in (p-1,p]$, then also 
$u \in L^{\tilde s}(\R^N)$ for $\tilde s:= \max \{p-1, s-\delta\}$.}\\[0.1cm]
To show this, we let $s \in (p-1,p]$ and put $t:= \frac{s}{p-1}$, so that $f \in L^t(\R^N)$. Since $t \in (1,p']$ and $p> \frac{3N-1}{N-1}>\frac{2N}{N-1}$, we have $\frac{1}{t} \ge \frac{1}{p'} >\frac{N+1}{2N}$, so the first condition in (\ref{eq:31-gutierrez-pair-condition}) is satisfied. We now distinguish the following cases:\\
\underline{\em Case 1:} $\frac{1}{t}-\frac{2}{N+1} < \frac{N-1}{2N}$. In this case, (\ref{eq:31-gutierrez-pair-condition}) is satisfied for $q \ge 1$ defined by 
$\frac{1}{q}=  \frac{1}{t}-\frac{2}{N+1}=\frac{p-1}{s}-\frac{2}{N+1}$ , whereas 
$$
\frac{1}{q}-\frac{1}{s} = \frac{p-2}{s}-\frac2{N+1} \ge \frac{p-2}{p}-\frac2{N+1}=: \delta_0>0,
$$
since $p > \frac{2(N+1)}{N-1}$. Hence, putting $\delta:= \frac{\delta_0}{1+\delta_0}>0$, we have $q \le s - \delta$ and thus 
$u \in L^q(\R^N) \cap L^p(\R^N) \subset L^{\tilde s}(\R^N)$. Hence the claim is true in this case.\\
\underline{\em Case 2:} $\frac{N-1}{2N} \le \frac{1}{t}-\frac{2}{N+1}$. In this case, since $p \le \frac{2N}{N-2}$ and thus $\frac{1}{p'}- \frac{2}{N} \le \frac{N+2}{2N}- \frac{2}{N} < \frac{N-1}{2N}$, we may choose $t_0 \in [t,p']$ such that 
$$
\frac{1}{t_0}-\frac{2}{N} < \frac{N-1}{2N} \le \frac{1}{t_0} - \frac{2}{N+1}.
$$
We then have $f \in L^{t_0}(\R^N)$, and (\ref{eq:31-gutierrez-pair-condition}) is satisfied for $t_0$ in place of $t$ and any $q$ satisfying $\frac{1}{t_0}-\frac{2}{N} \le \frac{1}{q} < \frac{N-1}{2N}$. 
Since $p> \frac{3N-1}{N-1}$ by assumption and therefore $\frac{1}{p-1} < \frac{N-1}{2N}$, we may choose $q$ such that $\frac{1}{p-1} \le \frac{1}{q}$. We then conclude that $u \in L^q(\R^N) \cap L^p(\R^N) \subset L^{p-1}(\R^N) \cap L^p(\R^N) \subset L^{\tilde s}(\R^N)$, and so the claim is also true in this case.\\ 
By iteration, the claim immediately gives $u \in L^{p-1}(\R^N)$, and so  (\ref{eq:31-u-in-L-p-1}) follows.

In order to prove the first assertion of the theorem, we apply
Lemma~\ref{lem:asympt_fct} with $K=\Psi$ and $V=Q|u|^{p-2}$.
Notice that 
from Lemma~\ref{lem:regularity} and the above, we know that $u\in L^{p-1}(\R^N)\cap L^\infty(\R^N)$ and
$Vu=f\in L^1(\R^N)\cap L^\infty(\R^N)$. Hence, there also holds $V\in L^{\frac{p-1}{p-2}}(\R^N)\cap L^\infty(\R^N)$
, and since $p>\frac{3N-1}{N-1}$, we see that $\frac{p-1}{p-2}<\frac{2N}{N+1}$. Thus, the conditions of
Lemma~\ref{lem:asympt_fct} are satisfied and the result follows.
The second assertion then follows from Proposition~\ref{prop:farfield_N} 
by remarking that
$$
|f(x)|\leq \|Q\|_\infty |u(x)|^{p-1}\leq C^{p-1}\|Q\|_\infty |x|^{\frac{(1-N)(p-1)}{2}}\quad\text{ for }x\neq 0,
$$
where $\frac{(p-1)(1-N)}{2}<-N$. The proof is therefore complete.
\end{proof}

\section{Existence of solutions in the compact case}\label{sec:compact}
We now assume, in addition to the assumptions of Section~\ref{sec:dual}, that $p<\frac{2N}{N-2}$ and $Q(x)\to 0$ as $|x|\to\infty$.
In this case, we shall prove the existence of infinitely many pairs
$\{\pm u\}$ of critical points for $J$ using a variant of the symmetric 
Mountain Pass Theorem \cite{ambrosetti-rabinowitz73}. 

For this purpose, we collect further properties of $\mK_p$ and 
the functional $J$.

\begin{lemma}\label{lem:MP_hyp-1}
For every $m \in \N$, there exists an $m$-dimensional subspace $\cW
\subset \cC_c^\infty(\R^N)$ with the following properties: 
\begin{itemize}
\item[(i)] $\int_{\R^N}v \mK_p v\, dx>0$ for all $v \in \cW \setminus
  \{0\}$.
\item[(ii)] There exists $R=R(\cW)>0$ such that $J(v) \le 0$ for every $v
  \in \cW$ with $\|v\|_{p'} \ge R$.
\end{itemize}
\end{lemma}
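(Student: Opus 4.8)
Since $\cW$ will be finite dimensional, statement (ii) is an immediate consequence of (i): by (i) the quadratic form $v\mapsto\int_{\R^N}v\mK_p v\,dx$ is positive definite on the finite-dimensional space $\cW$, hence there is $c>0$ with $\int_{\R^N}v\mK_p v\,dx\ge c\|v\|_{p'}^2$ for all $v\in\cW$, and therefore $J(v)\le\frac1{p'}\|v\|_{p'}^{p'}-\frac c2\|v\|_{p'}^2\to-\infty$ as $\|v\|_{p'}\to\infty$, because $p'<2$. Thus the whole task is (i): to produce an $m$-dimensional subspace of $\cC_c^\infty(\R^N)$ on which the form is strictly positive.

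To do this I would concentrate test functions near a well-chosen point, exploiting that close to the origin $\Psi=\Re\Phi$ agrees to leading order with the Newtonian kernel $c_N|x|^{2-N}$, which is a positive definite kernel on $\R^N$ (it is here that $N\ge3$ enters). Since $\{Q>0\}$ has positive measure, choose a Lebesgue point $x_0$ of $Q^{1/p}$ with $q_0:=Q^{1/p}(x_0)>0$, fix linearly independent $z_1,\dots,z_m\in\cC_c^\infty(B_1(0))$, and for $\epsilon\in(0,1]$ set $z_i^\epsilon(x):=z_i\bigl(\tfrac{x-x_0}{\epsilon}\bigr)$ and $\cW_\epsilon:=\spann\{z_1^\epsilon,\dots,z_m^\epsilon\}\subset\cC_c^\infty(B_\epsilon(x_0))$. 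The claim is that $\cW:=\cW_\epsilon$ has property (i) for all sufficiently small $\epsilon$.

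For the proof of the claim, write $v=\sum_i c_iz_i^\epsilon$ and $z:=\sum_i c_iz_i$, so that $v(x)=z(\tfrac{x-x_0}{\epsilon})$, and substitute $x=x_0+\epsilon x'$, $y=x_0+\epsilon y'$ to get
\[
\int_{\R^N}v\mK_p v\,dx=\epsilon^{2N}\int_{B_1}\!\int_{B_1}Q^{\frac1p}(x_0+\epsilon x')\,z(x')\,\Psi(\epsilon(x'-y'))\,Q^{\frac1p}(x_0+\epsilon y')\,z(y')\,dx'\,dy'.
\]
From the small-argument expansion in \eqref{eq:3} one has $\Psi(x)=c_N|x|^{2-N}+\Psi_r(x)$ for $|x|$ small, with $c_N=\Gamma(\tfrac{N-2}{2})/(4\pi^{N/2})>0$ the fundamental-solution constant of $-\Delta$ and $\Psi_r(x)=O(|x|^{3-N})=o(|x|^{2-N})$ as $x\to0$, so that $\int_{B_{2\epsilon}}|\Psi_r|=O(\epsilon^3)=o(\epsilon^2)$. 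Inserting this splitting, the singular part contributes $\epsilon^{N+2}c_N A_\epsilon(z)$, where
\[
A_\epsilon(z):=\int_{B_1}\!\int_{B_1}Q^{\frac1p}(x_0+\epsilon x')\,z(x')\,|x'-y'|^{2-N}\,Q^{\frac1p}(x_0+\epsilon y')\,z(y')\,dx'\,dy',
\]
while the $\Psi_r$-part is bounded, using $Q\in L^\infty$, by $C\epsilon^{N}\int_{B_{2\epsilon}}|\Psi_r|=o(\epsilon^{N+2})$, uniformly for $(c_i)$ in the unit sphere of $\R^m$. Since $x_0$ is a Lebesgue point of $Q^{1/p}$ and $Q^{1/p}\in L^\infty$, the functions $Q^{1/p}(x_0+\epsilon\,\cdot)$ converge to the constant $q_0$ in every $L^r(B_1)$, $r<\infty$; hence, by Hölder's inequality (with $|x'-y'|^{2-N}\in L^s(B_1\times B_1)$ for some $s>1$), $A_\epsilon(z)\to q_0^2A_0(z)$ with $A_0(z):=\int_{B_1}\!\int_{B_1}z(x)|x-y|^{2-N}z(y)\,dx\,dy$, and, these being quadratic forms in $(c_i)$ with convergent coefficient matrices, the convergence is uniform on the unit sphere. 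Finally, extending $z$ by zero, $A_0(z)$ equals a positive multiple of $\int_{\R^N}|\xi|^{-2}|\widehat z(\xi)|^2\,d\xi$, which is strictly positive when $z\ne0$ (the integral converges at $\xi=0$ precisely because $N\ge3$, and $\widehat z\not\equiv0$); hence $a_0:=\min A_0$ over the compact unit sphere is positive. Combining, for $\epsilon$ small enough,
\[
\int_{\R^N}v\mK_p v\,dx\ \ge\ \tfrac12\,c_N\,q_0^2\,a_0\,\epsilon^{N+2}\,\|(c_i)\|^2\ >\ 0\qquad\text{for every }v\in\cW_\epsilon\setminus\{0\},
\]
which is (i).

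The main obstacle is that $Q$ is only $L^\infty$, so it cannot be pulled out of the integral as a constant: one must use the Lebesgue-point property to replace $Q^{1/p}(x_0+\epsilon\,\cdot)$ by $q_0$ in the limit $\epsilon\to0$, while simultaneously checking that the subleading part $\Psi_r$ of $\Phi$, as well as the regular behaviour of $\Psi$ at scale $\epsilon$, are negligible against the $\epsilon^{N+2}$-sized Newtonian main term. Because $\cW_\epsilon$ is finite dimensional, all the uniformity over its unit sphere reduces to the convergence of finitely many real numbers, so no genuine functional-analytic compactness argument is needed.
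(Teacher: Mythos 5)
Your proof is correct, but it takes a genuinely different route from the paper's. Both proofs exploit the same source of positivity, namely that the real part $\Psi$ of the outgoing Helmholtz fundamental solution behaves near the origin like the Newtonian kernel $c_N|x|^{2-N}$ with $c_N=\Gamma(\tfrac{N-2}{2})/(4\pi^{N/2})>0$, and both localize near a density (Lebesgue) point of $\{Q>0\}$. The mechanisms, however, are distinct. The paper chooses $m$ disjoint balls $B^1,\dots,B^m$ inside a small ball $B_\delta(0)$ whose diameters $\tau=\delta/m^2$ are much smaller than their mutual separations $\ge\delta/m$, places one bump $z_i$ on each, and uses the quantities $\Psi^*(\tau)=\inf_{B_\tau\setminus\{0\}}\Psi$ and $\Psi_*(m\tau)=\|\Psi\|_{L^\infty(\R^N\setminus B_{m\tau})}$: after suitably shrinking $\delta$ one has $\Psi^*(\tau)>(m-1)\Psi_*(m\tau)$, so the Gram-type matrix of the quadratic form becomes strictly diagonally dominant with positive diagonal, hence positive definite. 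Your argument instead fixes $m$ arbitrary independent bumps in $B_1$, concentrates them at a single Lebesgue point $x_0$ via the scaling $z_i^\epsilon(x)=z_i(\tfrac{x-x_0}{\epsilon})$, passes to the limit $\epsilon\to0$ in which $\Psi$ scales to the Riesz kernel $c_N|x|^{2-N}$ and $Q^{1/p}$ is replaced by the constant $q_0>0$, and then invokes the Fourier-side positivity $\widehat{|x|^{2-N}}\sim|\xi|^{-2}$ (valid for $N\ge3$) to conclude positive definiteness of the limiting form $A_0$. What each buys: the paper's route is entirely elementary and requires no Fourier analysis of the Riesz potential, only pointwise bounds on $\Psi$; your route is structurally cleaner (no combinatorial ball-separation, no diagonal-dominance estimate) and isolates the conceptual reason for positivity, namely that the Newtonian kernel is positive definite, at the cost of a blow-up limit, a Lebesgue-point replacement of $Q^{1/p}$ by $q_0$, and a Fourier argument. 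Both are correct; your observation that (ii) follows automatically from (i) by finite-dimensionality is the same step the paper performs.
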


\begin{proof}
Since $Q \not \equiv 0$, there exists a point of density one for the set $\{Q >0\}$. 
Without loss of generality, we may assume that $x_0=0$. Then for 
$\delta>0$ sufficiently small we have 
\begin{equation}
  \label{eq:21}
|Q^{-1}(0) \cap B_\delta(0)| \le  \Bigl(\frac{1}{4m^2}\Bigr)^N |B_\delta(0)|.
\end{equation}
Let 
$$
\Psi^*(\tau):= \inf_{B_{\tau}(0)\setminus \{0\}}\Psi\qquad
\text{and}\qquad \Psi_*(\tau):=\|\Psi\|_{L^\infty(\R^N \setminus
  B_{\tau}(0))} \qquad \text{for $\tau >0$.}
$$
Since $\Psi$ is bounded outside of every neighborhood of zero and
$\Psi(x)|x|^{N-2}$ tends to a positive constant as $|x| \to 0$ by (\ref{eq:3}),
we may fix $\delta >0$ such that (\ref{eq:21}) holds and that
\begin{equation}
  \label{eq:19}
\Psi^*(\tau) >
(m-1)\Psi_*(m\tau) \qquad \text{for
  $\tau \in (0,\delta]$.}
\end{equation}
Moreover, it is easy to see that there exists $m$ disjoint open
balls $B^1,\dots,B^m \subset B_\delta(0)$ of diameter $\tau:= \frac{\delta}{m^2}$ such that 
$$
\dist(B^i,B^j):= \inf\{|x-y|\::\:x \in B^i, \:y \in B^j\} \ge \frac{\delta}{m}.
$$
Since $|B^i| = \Bigl(\frac{1}{2m^2}\Bigr)^N |B_\delta(0)|$ for $i=1,\dots,m$, we also have 
\begin{equation}
  \label{eq:22}
|B^i \cap \{Q>0\}|>0 \qquad \text{for $i=1,\dots,m$}  
\end{equation}
by (\ref{eq:21}). We now fix functions $z_i \in \cC_c^\infty(\R^N)$, $i=1,\dots,m$
such that $z_i>0$ in $B^i$ and $z_i \equiv 0$ in $\R^N \setminus B^i$.
Moreover, we let $\cW$ denote the span of $z_1,\dots,z_m$. Then
any $v \in \cW \setminus \{0\}$ can be written as $v= \sum \limits_{i=1}^m a_i
z_i$ with $a= (a_1,\dots,a_m) \in \R^m \setminus \{0\}$, and thus we
have 
\begin{align*}
 \int_{\R^N}& v \mK_p v\, dx=\sum_{i,j=1}^m
a_i a_j \int_{B^i}\int_{B^j} \Psi(x-y)Q(x)^\frac1p Q(y)^\frac1p z_i(x)z_j(y)\,
dxdy \\
\ge& \Psi^*(\tau)  \sum_{i=1}^m a_i^2 \Bigl(\int_{B^i} Q(x)^\frac1p
z_i(x)\,dx\Bigr)^2\\
 &- \Psi_*(m \tau) \sum_{\stackrel{i,j=1}{i \not= j}}^m |a_i| |a_j| \Bigl(\int_{B^i} Q(x)^\frac1p
z_i(x)\,dx\Bigr) \Bigl(\int_{B^j} Q(x)^\frac1p
z_j(x)\,dx\Bigr)\\
\ge& \Psi^*(\tau)  \sum_{i=1}^m a_i^2 \Bigl(\int_{B^i} Q(x)^\frac1p
z_i(x)\,dx\Bigr)^2 \\
&- \frac{\Psi_*(m\tau)}{2} \sum_{\stackrel{i,j=1}{i \not=
    j}}^m \Bigl[ a_i^2 \Bigl(\int_{B^i} Q(x)^\frac1p
z_i(x)\,dx\Bigr)^2 + a_j^2 \Bigl(\int_{B^j} Q(x)^\frac1p
z_j(x)\,dx\Bigr)^2 \Bigr]\\
&=   \sum_{i=1}^m \Bigl(\Psi^*(\tau) -
(m-1)\Psi_*(m \tau) \Bigr)a_i^2
\Bigl(\int_{B^i} Q(x)^\frac1p
z_i(x)\,dx\Bigr)^2 >0
\end{align*}
as a consequence of (\ref{eq:19}) and (\ref{eq:22}). This shows
(i). As a consequence of (i) and continuity, we have 
$$
m_\ccW:= \inf_{v \in \cW, \|v\|_{p'}=1}\: \int_{\R^N}v \mK_p v\, dx >0,
$$
Hence 
$$
J(v)= \frac{\|v\|_{p'}^{p'}}{p'}-\frac12\int_{\R^N}v\mK_p v\, dx 
\le  \|v\|_{p'}^{p'}\Bigl(\frac{1}{p'}- \frac12\|v\|_{p'}^{2-p'}
m_\ccW \Bigr)\qquad \text{for $v \in \cW$.}
$$
Thus (ii) follows with $R:= \Bigl(\frac{2}{m_\ccW p'}\Bigr)^{\frac{1}{2-p'}}$.
\end{proof}

\begin{lemma}
  \label{sec:exist-solut-comp}
$J$ satisfies the Palais-Smale condition in $L^{p'}(\R^N)$.
\end{lemma}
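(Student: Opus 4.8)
The plan is to combine two facts that are already at our disposal: every Palais--Smale sequence for $J$ is bounded in $L^{p'}(\R^N)$ (Lemma~\ref{lem:MP_geom2}(iii)), and, under the present hypothesis $Q(x)\to 0$ as $|x|\to\infty$, the operator $\mK_p$ is compact (Lemma~\ref{lem:compact}(ii)). Once strong convergence of $\mK_p(v_n)$ is available, the nonlinear term $|v_n|^{p'-2}v_n$ can be handled by inverting the duality map $v\mapsto|v|^{p'-2}v$.

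First I would take a Palais--Smale sequence $(v_n)_n\subset L^{p'}(\R^N)$ for $J$, so that $\sup_n|J(v_n)|<\infty$ and $J'(v_n)\to 0$ in $(L^{p'}(\R^N))^\ast\cong L^p(\R^N)$. By Lemma~\ref{lem:MP_geom2}(iii) the sequence is bounded in $L^{p'}(\R^N)$, hence after passing to a subsequence $v_n\weakto v$ in $L^{p'}(\R^N)$ for some $v\in L^{p'}(\R^N)$. Since $\mK_p\colon L^{p'}(\R^N)\to L^p(\R^N)$ is a compact linear operator by Lemma~\ref{lem:compact}(ii), it maps the weakly convergent sequence $(v_n)_n$ onto a sequence $(\mK_p(v_n))_n$ which is relatively compact in $L^p(\R^N)$ and converges weakly to $\mK_p(v)$; therefore $\mK_p(v_n)\to\mK_p(v)$ strongly in $L^p(\R^N)$.

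Next, rewriting $J'(v_n)\to 0$ by means of \eqref{eqn:gradient} and the identification $(L^{p'}(\R^N))^\ast\cong L^p(\R^N)$, we get
$$
\bigl\||v_n|^{p'-2}v_n-\mK_p(v_n)\bigr\|_p\longrightarrow 0\qquad\text{as }n\to\infty.
$$
Combined with the strong convergence $\mK_p(v_n)\to\mK_p(v)$ from the previous step, this gives $|v_n|^{p'-2}v_n\to\mK_p(v)=:w$ strongly in $L^p(\R^N)$ (note $\bigl\||v_n|^{p'-2}v_n\bigr\|_p^p=\|v_n\|_{p'}^{p'}$, since $(p'-1)p=p'$, so this is consistent with boundedness). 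Finally, since $1<p'<2$, the map $v\mapsto|v|^{p'-2}v$ is a homeomorphism of $L^{p'}(\R^N)$ onto $L^p(\R^N)$, whose inverse is the Nemytskii operator $T(g):=|g|^{p-2}g$ (one uses $1/(p'-1)=p-1$); the continuity of $T$ from $L^p(\R^N)$ to $L^{p'}(\R^N)$ is the standard continuity of superposition operators, as $|T(g)|\le|g|^{p-1}$ and $(p-1)p'=p$. Applying $T$ to the convergence just established yields
$$
v_n=T\bigl(|v_n|^{p'-2}v_n\bigr)\longrightarrow T(w)\qquad\text{strongly in }L^{p'}(\R^N),
$$
so $(v_n)_n$ has a strongly convergent subsequence in $L^{p'}(\R^N)$, which is the Palais--Smale condition. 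A posteriori the strong limit coincides with the weak limit $v$, and passing to the limit in \eqref{eqn:gradient} shows $v$ is a critical point of $J$.

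The argument is short because the essential work is hidden in the two lemmas invoked above; the only point requiring a little care is the last step, namely upgrading strong $L^p$-convergence of $|v_n|^{p'-2}v_n$ to strong $L^{p'}$-convergence of $v_n$. If one prefers not to appeal to the continuity of the inverse duality map, the same conclusion follows by testing $J'(v_n)\to 0$ against $v_n-v$, using the weak convergence together with the strong convergence of $\mK_p(v_n)$ to obtain $\int_{\R^N}\bigl(|v_n|^{p'-2}v_n-|v|^{p'-2}v\bigr)(v_n-v)\,dx\to 0$, and then invoking the elementary convexity inequality valid for $1<p'<2$ together with H\"older's inequality to deduce $\|v_n-v\|_{p'}\to 0$.
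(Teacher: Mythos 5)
Your proof is correct, and it takes a slightly different closing step than the paper even though the setup is identical (bounded Palais--Smale sequence via Lemma~\ref{lem:MP_geom2}(iii), weak limit, compactness of $\mK_p$ from Lemma~\ref{lem:compact}(ii)). The paper works with the inner product $\int v_n \mK_p(v-v_n)\,dx$, uses symmetry and compactness of $\mK_p$ to show this tends to $0$, then invokes the pointwise convexity inequality $\frac{1}{p'}|a|^{p'}-\frac{1}{p'}|b|^{p'}\ge |b|^{p'-2}b\,(a-b)$ to conclude $\|v\|_{p'}\ge\limsup_n\|v_n\|_{p'}$; combined with weak lower semicontinuity this gives norm convergence, and then the Radon--Riesz property of the uniformly convex space $L^{p'}$ yields strong convergence. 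You instead read off from $J'(v_n)\to 0$ that $|v_n|^{p'-2}v_n-\mK_p(v_n)\to 0$ in $L^p$, upgrade $\mK_p(v_n)\to \mK_p(v)$ to strong $L^p$-convergence by compactness, and then apply the continuity of the inverse duality map $T(g)=|g|^{p-2}g:L^p\to L^{p'}$ (a bounded Nemytskii operator, continuous by the standard growth criterion $|T(g)|=|g|^{p/p'}$). Both arguments are sound; yours is arguably more direct and identifies the limit explicitly as $T(\mK_p v)$, at the cost of invoking Nemytskii continuity, whereas the paper's route avoids that and leans on convexity plus the Radon--Riesz property. The alternative you sketch at the end (testing $J'(v_n)$ against $v_n-v$ and using the elementary monotonicity inequality for $|t|^{p'-2}t$, $1<p'<2$, together with H\"older) is also valid and is closer in spirit to the paper's convexity argument.
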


\begin{proof}
Let $(v_n)_n\subset L^{p'}(\R^N)$ be a Palais-Smale sequence.
According to Lemma \ref{lem:MP_geom2} (iii), $(v_n)_n$ is bounded in $L^{p'}(\R^N)$. Hence, up to a subsequence, we
may assume $v_n\weak v\in L^{p'}(\R^N)$. From the convexity of the function $t\mapsto |t|^{p'}$ we obtain
\begin{align*}
\frac{1}{p'}\|v\|_{p'}^{p'}-\frac{1}{p'}\|v_n\|^{p'}_{p'}&\geq \int_{\R^N}|v_n|^{p'-2}v_n(v-v_n)\, dx\\
&= J'(v_n)(v-v_n) + \int_{\R^N}v_n\mK_p(v-v_n)\, dx\to 0
\end{align*}
as $n\to\infty$, taking into account the symmetry and the compactness of the Birman-Schwinger operator
$\mK_p$ proven in Lemma \ref{lem:compact}.
Consequently, $\|v\|_{p'}\geq \limsup\limits_{n\to\infty}\|v_n\|_{p'}$. On the other hand, the
weak convergence $v_n\weak v$ implies $\|v\|_{p'}\leq\liminf\limits_{n\to\infty}\|v_n\|_{p'}$, which together
gives $\lim\limits_{n\to\infty}\|v_n\|_{p'}=\|v\|_{p'}$, and hence $v_n\to v$ strongly in $L^{p'}(\R^N)$, as claimed.
\end{proof}

Combining Lemmas \ref{lem:MP_geom2} and \ref{sec:exist-solut-comp} with the symmetric Mountain Pass Theorem e.g. in the
form of \cite[Corollary 7.23]{ghoussoub}, we obtain the existence
of a sequence of nontrivial pairs $\{\pm v_n\}$
of critical points of $J$ with $J(v_n) \to \infty$ and thus  $\|v_n\|_{p'} \to \infty$ as $n \to \infty$. Setting $u_n:= \mR(Q^{\frac{1}{p}} v_n)$, we then have $v_n= Q^{\frac{1}{p'}}|u_n|^{p-2}u_n$ for all $n \in \N$ and thus $\|u_n\|_p \to \infty$ as $n \to \infty$.  Summarizing and taking Lemma
\ref{lem:regularity} into account, we can thus state the following.
\begin{theorem}
Let $\frac{2(N+1)}{N-1}\leq p<\frac{2N}{N-2}$ and consider a
nonnegative function $Q \in L^\infty(\R^N)$, $Q\not\equiv 0$ 
such that $Q(x)\to 0$ as $|x|\to\infty$. Then problem \eqref{eq:33}, \eqref{eqn:weak_asympt-r} 
admits a
sequence of pairs $\pm u_n \in W^{2,q}(\R^N)\cap\cC^{1,\alpha}(\R^N)$, $p\leq q<\infty$, $0<\alpha<1$ of strong solutions such that $\|u_n\|_{p} \to \infty$ as $n \to \infty$.
\end{theorem}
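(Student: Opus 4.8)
The plan is to read the theorem off the dual variational machinery already set up: I will apply a symmetric mountain-pass theorem to the even functional $J$ on $L^{p'}(\R^N)$ from \eqref{eqn:energy_funct}, and then transport the critical points so obtained back to solutions of \eqref{eq:33} via the substitution $u=\mR(Q^{\frac1p}v)$. First I would record that $J$ is even (both $\|v\|_{p'}^{p'}$ and $\int v\mK_p v\,dx$ are invariant under $v\mapsto -v$), that $J\in\cC^1(L^{p'}(\R^N),\R)$ by \eqref{eqn:gradient}, and that $J(0)=0$, so that the abstract framework is applicable in principle.

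Next I would verify, one by one, the hypotheses of the symmetric mountain-pass theorem in the form of \cite[Corollary 7.23]{ghoussoub}. The local geometry near the origin, namely $J(v)\ge\delta>0$ whenever $\|v\|_{p'}=\rho$ for suitable small $\rho,\delta>0$, is Lemma~\ref{lem:MP_geom2}(i). The multidimensional linking data --- for every $m\in\N$ an $m$-dimensional subspace $\cW\subset\cC_c^\infty(\R^N)$ with $J\le 0$ on $\cW$ outside a ball --- is provided by Lemma~\ref{lem:MP_hyp-1}. The Palais--Smale condition is Lemma~\ref{sec:exist-solut-comp}, and boundedness of Palais--Smale sequences is already contained in Lemma~\ref{lem:MP_geom2}(iii). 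Feeding these into the theorem yields a sequence of pairs $\{\pm v_n\}\subset L^{p'}(\R^N)$ of nontrivial critical points of $J$ with $J(v_n)\to\infty$ as $n\to\infty$.

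It then remains to translate this back to \eqref{eq:33}. Since $J'(v_n)=0$, testing \eqref{eqn:gradient} against $v_n$ gives $\|v_n\|_{p'}^{p'}=\int_{\R^N}v_n\mK_p v_n\,dx$, whence $J(v_n)=\bigl(\tfrac1{p'}-\tfrac12\bigr)\|v_n\|_{p'}^{p'}$ and therefore $\|v_n\|_{p'}\to\infty$. Putting $u_n:=\mR(Q^{\frac1p}v_n)$, the computation preceding Lemma~\ref{lem:regularity} shows that $u_n$ solves \eqref{eqn:integ12} and that $v_n=Q^{\frac1{p'}}|u_n|^{p-2}u_n$; consequently $\|v_n\|_{p'}^{p'}=\int_{\R^N}Q|u_n|^p\,dx\le\|Q\|_\infty\|u_n\|_p^p$, so $\|u_n\|_p\to\infty$. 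Finally Lemma~\ref{lem:regularity} upgrades each $u_n$ to an element of $W^{2,q}(\R^N)\cap\cC^{1,\alpha}(\R^N)$ for all $q\in[p,\infty)$ and $\alpha\in(0,1)$, shows it is a strong solution of \eqref{eq:33}, and produces the far-field relation \eqref{eqn:weak_asympt-r}; by evenness of $J$ the solutions come in pairs $\pm u_n$.

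The substantive content sits entirely in the preparatory lemmas, not in this assembly. The one place where the hypothesis $Q(x)\to 0$ as $|x|\to\infty$ is genuinely used is the compactness of the Birman--Schwinger operator $\mK_p$ on all of $L^{p'}(\R^N)$ (Lemma~\ref{lem:compact}(ii)), which is exactly what makes the Palais--Smale condition hold --- without decay of $Q$ one only has local compactness of $1_B\mK_p$ and must argue differently, as in the periodic case. The other delicate ingredient is Lemma~\ref{lem:MP_hyp-1}: producing arbitrarily high-dimensional subspaces on which $J$ is anti-coercive exploits the pointwise positivity and the $|x|^{2-N}$ blow-up of $\Psi$ near the origin, applied on well-separated small balls clustered at a density-one point of $\{Q>0\}$. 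Beyond correctly matching the parity and the linking levels when invoking the abstract theorem, I expect no further obstacle.
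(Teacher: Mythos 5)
Your proof matches the paper's argument step for step: both invoke the symmetric mountain-pass theorem (Ghoussoub, Corollary 7.23) using Lemmas~\ref{lem:MP_geom2}, \ref{lem:MP_hyp-1} and \ref{sec:exist-solut-comp}, deduce $\|v_n\|_{p'}\to\infty$ from $J(v_n)\to\infty$, pass to $u_n=\mR(Q^{1/p}v_n)$ via the duality relation $v_n=Q^{1/p'}|u_n|^{p-2}u_n$ to get $\|u_n\|_p\to\infty$, and then upgrade regularity and extract the far-field relation from Lemma~\ref{lem:regularity}. No gaps; your explicit computation of $J(v_n)$ at a critical point and of $\|v_n\|_{p'}^{p'}=\int Q|u_n|^p\,dx$ merely fills in small details the paper leaves implicit.
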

\section{Existence in the periodic case}\label{sec:dual_periodic}
In this section, we treat the case where $Q\in L^\infty(\R^N)$ is $1$-periodic on $\R^N$, i.e. $Q(x+e_i)=Q(x)$ for all $x\in\R^N$ and all 
$1\leq i\leq N$, $\{e_1,\ldots, e_N\}\subset\R^N$ denoting the standard basis in $\R^N$.

We shall prove the existence of solutions using a dual variational approach as before. 
Considering the dual functional
$J$: $L^{p'}(\R^N)$ $\to$ $\R$ given by \eqref{eqn:energy_funct},
we already know that $J$ is of class $\cC^1$ on $L^{p'}(\R^N)$ and, according to Lemma \ref{lem:MP_geom2}, that it possesses the mountain-pass
geometry. However, since $\mK_p$ is not compact anymore, the Palais-Smale condition does not hold in general. Nevertheless,
we may define a mountain-pass level for $J$ by setting
$$
c:=\inf_{\gamma\in\Gamma}\max_{t\in[0,1]}J(\gamma(t))
$$
where $\Gamma=\{\gamma\in C([0,1],L^{p'}(\R^N))\, :\, \gamma(0)=0\text{ and }J(\gamma(1))<0\}$.
Remark that by Lemma \ref{lem:MP_geom2} there holds  $\Gamma\neq \varnothing$ and $c>0$.
Our purpose is to show that $c$ is a critical level of $J$. We start by proving the existence of some Palais-Smale sequence for $J$.

\begin{lemma}\label{lem:bd_PS_2}
There exists a bounded Palais-Smale sequence $(v_n)_n\subset L^{p'}(\R^N)$ for $J$ at level $c$.
\end{lemma}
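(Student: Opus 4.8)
The plan is to read the sequence off directly from the mountain-pass geometry of $J$ established in Lemma~\ref{lem:MP_geom2}, combined with the a priori bound contained in part (iii) of that lemma. Since Lemma~\ref{lem:MP_geom2}(i)--(ii) guarantees $\Gamma\neq\varnothing$ and $c\geq\delta>0$, and since $J\in\cC^1(L^{p'}(\R^N),\R)$ is even by \eqref{eqn:gradient}, the general minimax principle applies in the Banach space $L^{p'}(\R^N)$: the mountain-pass theorem in the form that does not presuppose a compactness condition (obtained, e.g., by applying Ekeland's variational principle to the functional $\gamma\mapsto\max_{t\in[0,1]}J(\gamma(t))$ on $\Gamma$, or via the quantitative deformation lemma) produces a sequence $(v_n)_n\subset L^{p'}(\R^N)$ with
\[
J(v_n)\to c \qquad\text{and}\qquad J'(v_n)\to 0 \ \text{ in } (L^{p'}(\R^N))^\ast .
\]

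First I would check that the hypotheses of this abstract principle are met --- this is routine: $L^{p'}(\R^N)$ is a Banach space, $J$ is of class $\cC^1$ by \eqref{eqn:gradient}, and $c>0$ separates the origin from the far end of any path by Lemma~\ref{lem:MP_geom2}(i). Having produced the Palais-Smale sequence at level $c$, boundedness is then immediate from Lemma~\ref{lem:MP_geom2}(iii): the identity
\[
J(v_n)-\tfrac12 J'(v_n)v_n = \Bigl(\tfrac{1}{p'}-\tfrac12\Bigr)\|v_n\|_{p'}^{p'},
\]
combined with $1<p'<2$, $J(v_n)=O(1)$ and $\|J'(v_n)\|_\ast=o(1)$, forces $\sup_n\|v_n\|_{p'}<\infty$. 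This yields the claimed bounded Palais-Smale sequence.

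There is no serious obstacle at this stage; the only point worth stressing is what the lemma deliberately does \emph{not} accomplish. Unlike in the compact case of Section~\ref{sec:compact}, the operator $\mK_p$ is not compact here, so one cannot assert that $(v_n)_n$ converges (even along a subsequence) or that its weak limit is nontrivial. The genuine difficulty of the periodic case --- exploiting the $\Z^N$-invariance of $J$ to replace $(v_n)_n$ by a translated sequence $v_n(\cdot-x_n)$ whose weak limit is a nonzero critical point, which is exactly where the nonvanishing Theorem~\ref{lem:conc_comp} enters --- is postponed to the subsequent lemmas. The present statement therefore requires nothing beyond the standard mountain-pass lemma together with part (iii) of Lemma~\ref{lem:MP_geom2}.
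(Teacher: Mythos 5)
Your proposal is correct and takes essentially the same route as the paper: the paper carries out precisely the ``via the quantitative deformation lemma'' argument you mention (by contradiction, deforming a path $\gamma\in\Gamma$ with $\max_t J(\gamma(t))\le c+\eps$ into one lying below $c-\eps$), and then, like you, derives boundedness from Lemma~\ref{lem:MP_geom2}(iii). The only difference is that you invoke the mountain-pass theorem without compactness as a black box while the paper spells out the two-line deformation argument; the substance is identical, and the remark that $J$ is even, while true, is not needed here.
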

\begin{proof}
Suppose by contradiction, that no Palais-Smale sequence for $J$ exists at level $c$. In that case there are $0<\eps<\frac{c}{2}$
and $\delta>0$ such that $\|J'(v)\|\geq \delta>0$ for all $v\in L^{p'}(\R^N)$ satisfying $|J(v)-c|\leq 2\eps$. According to the
deformation Lemma \cite[Lemma 2.3]{willem} we can therefore find a homotopy $\eta\in C([0,1]\times L^{p'}(\R^N),L^{p'}(\R^N))$ such that
\begin{itemize}
\item[(i)] $\eta(0,v)=v$ for all $v\in L^{p'}(\R^N)$,
\item[(ii)] $\eta(t,v)=v$ for all $t\in[0,1]$, $v\in L^{p'}(\R^N)$ for which $J(v)\notin[c-2\eps,c+2\eps]$,
\item[(iii)] $\eta(t,\cdot)$ is a homeomorphism of $L^{p'}(\R^N)$ for all $t\in[0,1]$,
\item[(iv)] $J(\eta(1,v))\leq c-\eps$ for all $v\in L^{p'}(\R^N)$ such that $J(v)\leq c+\eps$.
\end{itemize}
Choosing now $\gamma\in\Gamma$ such that $\max\limits_{t\in[0,1]}J(\gamma(t))\leq c+\eps$ and setting $\tilde{\gamma}(t):=\eta(1,\gamma(t))$ 
for all $t\in[0,1]$, we obtain from (ii) $\tilde{\gamma}(0)=0$ and $\tilde{\gamma}(1)=\gamma(1)$, which in turn implies 
$\tilde{\gamma}\in\Gamma$ and therefore $\max\limits_{t\in[0,1]}J(\tilde{\gamma}(t))\geq c$. On the other hand, it follows from (iv) 
that
$$
J(\tilde{\gamma}(t))=J(\eta(1,\gamma(t)))\leq c-\eps\quad\text{for all }t\in[0,1]
$$
which is a contradiction. Therefore, there must exist some Palais-Smale sequence $(v_n)_n\subset L^{p'}(\R^N)$ at level $c$ for $J$. Moreover,
by Lemma \ref{lem:MP_geom2} (iii), $(v_n)_n$ is a bounded sequence.
\end{proof}

\begin{theorem}\label{thm:exist2}
Let $\frac{2(N+1)}{N-1}< p<2^\ast$ and consider a nonnegative function $Q\in L^\infty(\R^N)$, $Q\not\equiv 0$ which is $\Z^N$-periodic on $\R^N$. Then
\eqref{eq:33}, \eqref{eqn:weak_asympt-r} has a nontrivial strong solution $u\in W^{2,q}(\R^N)\cap\cC^{1,\alpha}(\R^N)$,
$p\leq q<\infty$, $0<\alpha<1$.
\end{theorem}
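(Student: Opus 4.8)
The plan is to obtain a nontrivial critical point of the dual functional $J$ at the mountain-pass level $c$ by combining the bounded Palais-Smale sequence from Lemma~\ref{lem:bd_PS_2} with the nonvanishing property (Theorem~\ref{lem:conc_comp}) and the periodicity of $Q$. First I would take the bounded Palais-Smale sequence $(v_n)_n \subset L^{p'}(\R^N)$ at level $c>0$ provided by Lemma~\ref{lem:bd_PS_2}. Since $J(v_n)\to c$, $J'(v_n)\to 0$, and $(v_n)_n$ is bounded, the identity $J(v_n)-\frac12 J'(v_n)v_n = (\frac{1}{p'}-\frac12)\|v_n\|_{p'}^{p'}$ forces $\liminf_n \|v_n\|_{p'}^{p'} \ge \frac{c}{\frac{1}{p'}-\frac12}>0$, so the sequence does not vanish in norm; more importantly, $J'(v_n)v_n\to 0$ together with $J(v_n)\to c>0$ gives $\int_{\R^N} v_n \mR(Q^{1/p}v_n)\,Q^{1/p}\,dx = \int_{\R^N} v_n\mK_p(v_n)\,dx \to \frac{2c}{1-2/p'} $ up to the elementary algebra, in any case $\limsup_n \left|\int_{\R^N} v_n\mK_p(v_n)\,dx\right| > 0$. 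Setting $w_n := Q^{1/p}v_n$, which is bounded in $L^{p'}(\R^N)$ because $Q\in L^\infty$, we get $\limsup_n\left|\int_{\R^N} w_n \mR(w_n)\,dx\right|>0$, so Theorem~\ref{lem:conc_comp} applies to $(w_n)_n$ and yields $R>0$, $\zeta>0$ and points $x_n \in \R^N$ with $\int_{B_R(x_n)}|w_n|^{p'}\,dx \ge \zeta$ along a subsequence.

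Next I would use the $\Z^N$-periodicity of $Q$ to translate. Replacing $x_n$ by the nearest lattice point $y_n\in\Z^N$ (which changes the ball $B_R(x_n)$ by a bounded amount, so the mass condition persists with a possibly smaller $\zeta$ and larger $R$), I set $\tilde v_n(x) := v_n(x+y_n)$. Because $Q(\cdot + y_n)=Q$ and $\mR$ is a convolution operator, $J$ is invariant under this translation, so $(\tilde v_n)_n$ is again a bounded Palais-Smale sequence at level $c$, and it satisfies $\int_{B_R(0)} Q^{1/p}|\tilde v_n|^{p'}\,dx \ge \zeta' > 0$ for some fixed $R,\zeta'>0$. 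Passing to a further subsequence, $\tilde v_n \weakto v$ in $L^{p'}(\R^N)$ for some $v\in L^{p'}(\R^N)$. The key point is then to show $v\neq 0$ and that $v$ is a critical point of $J$.

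For the critical point property: I would show $J'(\tilde v_n)\to 0$ in $(L^{p'})^* = L^p$ passes to the limit weakly. The linear term is easy — $\mK_p$ is symmetric and bounded, and since $1_B\mK_p$ is compact for bounded $B$ (Lemma~\ref{lem:compact}(i)), one gets $\mK_p(\tilde v_n)\weakto \mK_p(v)$ locally strongly, hence weakly in $L^p$; more carefully, for any test function $\phi\in\cC_c^\infty(\R^N)$ one has $\int \phi\,\mK_p(\tilde v_n)\,dx = \int \tilde v_n\,\mK_p(\phi)\,dx \to \int v\,\mK_p(\phi)\,dx = \int \phi\,\mK_p(v)\,dx$ by symmetry and weak convergence. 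The nonlinear term $|\tilde v_n|^{p'-2}\tilde v_n$ requires an argument: since $(\tilde v_n)$ is bounded in $L^{p'}$, $(|\tilde v_n|^{p'-2}\tilde v_n)$ is bounded in $L^p$, and after passing to a subsequence converging a.e. (using local compactness via the regularity of $\mK_p(\tilde v_n)$ in $W^{2,p'}_{\mathrm{loc}}$ applied to the equation $|\tilde v_n|^{p'-2}\tilde v_n = \mK_p(\tilde v_n)+o(1)$ in $L^p$) one identifies the weak $L^p$-limit as $|v|^{p'-2}v$. Testing $J'(\tilde v_n)\to 0$ against fixed $\phi\in\cC_c^\infty$ and letting $n\to\infty$ then gives $J'(v)\phi = 0$ for all such $\phi$, hence $J'(v)=0$ by density.

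For nontriviality: from $\int_{B_R(0)}Q^{1/p}|\tilde v_n|^{p'}\,dx \ge \zeta'$ I need the a.e. (hence local $L^{p'}$) convergence of $\tilde v_n$ to $v$ on $B_R(0)$ to conclude $\int_{B_R(0)}Q^{1/p}|v|^{p'}\,dx \ge \zeta' > 0$, so $v\not\equiv 0$. The local strong convergence follows again from the integral equation: $|\tilde v_n|^{p'-2}\tilde v_n = \mK_p(\tilde v_n) + r_n$ with $r_n\to0$ in $L^p$, and $\mK_p(\tilde v_n) = Q^{1/p}\mR(Q^{1/p}\tilde v_n)$ is bounded in $W^{2,p'}_{\mathrm{loc}}$ by Proposition~\ref{prop:appendix}, hence precompact in $L^p_{\mathrm{loc}}$; this gives $|\tilde v_n|^{p'-2}\tilde v_n$ precompact in $L^p_{\mathrm{loc}}$, and since $t\mapsto |t|^{p'-2}t$ is a homeomorphism of $\R$ with continuous inverse of polynomial growth, $\tilde v_n$ is precompact in $L^{p'}_{\mathrm{loc}}$. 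Thus $\tilde v_n \to v$ in $L^{p'}_{\mathrm{loc}}(\R^N)$ along a subsequence, which simultaneously secures both $v\neq 0$ and the a.e.-convergence used above. Having a nontrivial critical point $v$ of $J$, I set $u := \mR(Q^{1/p}v)$; as noted after \eqref{eqn:u}, $u\not\equiv 0$ and $u$ solves \eqref{eqn:integ12}. Lemma~\ref{lem:regularity} then upgrades $u$ to a strong solution of \eqref{eq:33} lying in $W^{2,q}(\R^N)\cap\cC^{1,\alpha}(\R^N)$ for all $q\in[p,\infty)$, $\alpha\in(0,1)$, satisfying the far-field relation \eqref{eqn:weak_asympt-r}, which completes the proof.

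The main obstacle is the nonvanishing/translation step: the functional $J$ has no compactness (the operator $\mK_p$ is not compact for periodic $Q$), and the only way to keep a nontrivial limit is to recentre the Palais-Smale sequence using the lattice invariance, which in turn relies crucially on verifying that the translated integral-equation residual still controls local strong convergence. Ensuring that the weak limit of the nonlinear term $|\tilde v_n|^{p'-2}\tilde v_n$ is exactly $|v|^{p'-2}v$ — i.e. ruling out that the map "absorbs mass at infinity" despite the recentring — is the delicate point, and it is precisely where the $W^{2,p'}_{\mathrm{loc}}$-regularity of $\mR$ (Proposition~\ref{prop:appendix}) combined with the nonvanishing bound is used.
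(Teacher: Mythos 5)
Your proof is correct and follows essentially the same route as the paper: bounded Palais--Smale sequence at the mountain-pass level, the nonvanishing Theorem~\ref{lem:conc_comp} plus $\Z^N$-translation to recentre, local $L^p$-strong convergence of $|\tilde v_n|^{p'-2}\tilde v_n$ via local compactness of the Birman--Schwinger operator, then passing to the weak limit and applying Lemma~\ref{lem:regularity}. The only small slip is that, with $Q$ merely bounded, it is $\mR(Q^{1/p}\tilde v_n)$ rather than $\mK_p(\tilde v_n)=Q^{1/p}\mR(Q^{1/p}\tilde v_n)$ that is bounded in $W^{2,p'}_{\mathrm{loc}}$ by Proposition~\ref{prop:appendix}; the $L^p_{\mathrm{loc}}$-precompactness of $\mK_p(\tilde v_n)$ then follows by first invoking the compact Sobolev embedding for $\mR(Q^{1/p}\tilde v_n)$ and multiplying by $Q^{1/p}\in L^\infty$ afterwards, which is precisely the content of Lemma~\ref{lem:compact}(i) that the paper cites directly (your constant $\frac{2c}{1-2/p'}$ has the wrong sign, but as you note only the nonvanishing of $\limsup_n|\int v_n\mK_p v_n\,dx|$ is needed).
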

\begin{proof}
Let $(v_n)_n$ and $v$ be as in Lemma \ref{lem:bd_PS_2}. Since $J(v_n)\to c>0$ and $J'(v_n)v_n\to 0$ as $n\to\infty$, we find
$$
\lim_{n\to\infty}\int_{\R^N}Q^{\frac1p}v_n\mR(Q^{\frac1p}v_n)\, dx=\frac{2p'}{(2-p')}\lim_{n\to\infty}\Bigl[J(v_n)-\frac1{p'}J'(v_n)v_n\Bigr]=\frac{2p'}{(2-p')}c>0.
$$
Since $Q\in L^\infty(\R^N)$, the sequence $(Q^{\frac1p}v_n)_n$ is
bounded and Theorem~\ref{lem:conc_comp} gives the existence of 
$R, \zeta>0$ and of a sequence $(x_n)_n\subset\R^N$ such that, up to a subsequence,
\begin{equation}\label{eqn:liminf}
\int_{B_R(x_n)}|v_n|^{p'}\, dx \geq \zeta\quad\text{for all }n.
\end{equation}
Note that we may assume (taking $R$ larger if necessary) that $x_n\in\Z^N$ holds for all $n$. 
Setting $w_n(x)=v_n(x+x_n)$, $x\in\R^N$, we find that $(w_n)_n\subset L^{p'}(\R^N)$ is a bounded sequence. 
Hence, up to a subsequence, 
$w_n\weakto w$ in $L^{p'}(\R^N)$. Moreover, $J(w_n)=J(v_n)$ and
$\|J'(w_n)\|=\|J'(v_n)\|$ for all $n$ by the periodicity of $Q$ and the 
translation equivariance of $\mR$. Next we show that 
\begin{equation}
  \label{eq:23}
1_{B_{R'}}|w_n|^{p'-2}w_n\to 1_{B_{R'}}|w|^{p'-2}w\quad\text{strongly
  in $L^{p}(B_{R'})$ for every $R'>0$.}
\end{equation}
To see this, fix $\phi \in\cC^\infty_c(B_{R'}) \subset
\cC^\infty_c(\R^N)$. 
Then for $n,m\in\N$ we have 
\begin{align*}
&\left|\int_{\R^N}\Bigl(|w_n|^{p'-2}w_n -|w_m|^{p'-2}w_m\Bigr) \varphi\, dx \right| \\
&\qquad =\left| J'(w_n)\varphi-J'(w_m)\varphi + \int_{B_{R'}}\varphi\mK_p(w_n-w_m)\, dx\right|\\
&\qquad \leq \|J'(w_n)-J'(w_m)\| \|\varphi\|_{p'} + \|1_{B_{R'}}\mK_p(w_n-w_m)\|_{p} \|\varphi\|_{p'}.
\end{align*}
Since $\cC^\infty_c(B_{R'}) \subset L^{p'}(B_{R'})$ is dense,
$\|J'(w_n)\|\to 0$ as $n \to \infty$ and since, according to Lemma \ref{lem:compact}, $1_{B_{R'}}\mK_p$ is a compact operator, 
we deduce that $|w_n|^{p'-2}w_n$ is a Cauchy sequence in $L^p(B_{R'})$, so
that $|w_n|^{p'-2}w_n \to \tilde w$ strongly in $L^p(B_{R'})$ for some
$\tilde w \in
L^p(B_{R'})$. Up to a subsequence, $|w_n|^{p'-2}w_n \to \tilde w$ and,
equivalently, $w_n \to |\tilde w|^{p-2}\tilde w$ pointwise a.e. on $B_{R'}$. The uniqueness of the weak limit
then gives $w=|\tilde w|^{p-2}\tilde w$, i.e. $\tilde w=|w|^{p'-2}w$
on $B_{R'}$. Hence (\ref{eq:23}) is true. As a consequence, 
$$
0<\zeta\leq \int_{B_R(x_n)}|v_n|^{p'}\, dx=\int_{B_R}|w_n|^{p'}\, dx \to \int_{B_R}|w|^{p'}\, dx\quad\text{as }n\to\infty,
$$
which implies $w\neq 0$. Next we show that $w$ is a critical
point of $J$. For every $\varphi\in\cC^\infty_c(\R^N)$ 
we have, by (\ref{eq:23}),
$$
\int_{\R^N}|w_n|^{p'-2}w_n\varphi\, dx \to \int_{\R^N}|w|^{p'-2}w\varphi\, dx\quad\text{as }n\to\infty
$$
and also, since $\mK_p$ is a bounded linear operator, 
$$
\int_{\R^N}\varphi\mK_p(w_n)\, dx\to \int_{\R^N}\varphi\mK_p(w)\, dx\quad\text{as }n\to\infty.
$$
Consequently,
\begin{align*}
J'(w)\varphi &= \int_{\R^N}|w|^{p'-2}w\varphi\, dx -
\int_{\R^N}\varphi\mK_p(w)\, dx\\
&= \lim_{n \to
  \infty}\Bigl(\int_{\R^N}|w_n|^{p'-2}w_n\varphi\, dx -
\int_{\R^N}\varphi\mK_p(w_n)\, dx\Bigr)= \lim_{n \to \infty}J'(w_n)\phi=0.
\end{align*}
Therefore, $w\in L^{p'}(\R^N)$ is a nontrivial critical point of $J$ and Lemma \ref{lem:regularity} concludes the proof.
\end{proof}

\section*{Acknowledgements}
The authors would like to than the referee for his/her careful reading of the manuscript and his/her suggestions which greatly simplified the proof of Theorem~\ref{sec:nonv-thm-2}.

\appendix 
\section{ }\label{sec:appendix}
\begin{proposition}\label{prop:appendix}
Let $\frac{2(N+1)}{N-1}\leq p\leq\frac{2N}{N-2}$ and $f\in
L^{p'}(\R^N)$. Then $u:=\cR  f \in W^{2,p'}_{\text{loc}}(\R^N)\cap L^p(\R^N)$ is a strong solution of $-\Delta u - u=f$ 
in $\R^N$. Moreover, for every $r>0$, there exists a constant $\tilde{C}>0$ depending only on $r$, $p$ and $N$, such that for all $x_0\in\R^N$,
\begin{equation}
\label{cald_zygm_2}
    \|u\|_{W^{2,p'}(B_r(x_0))}\leq \tilde{C}\left( \|u\|_{L^p(\R^N)}+\|f\|_{L^{p'}(\R^N)}\right).
\end{equation}
Furthermore,
\begin{itemize}
  \item[(i)] if $f\in L^{p'}(\R^N)\cap L^q_{\text{loc}}(\R^N)$ and
    $u\in L^q_{\text{loc}}(\R^N)$ for some $q \in (1,\infty)$, then 
   $u\in W^{2,q}_{\text{loc}}(\R^N)$, and for every $r>0$ there exists
   a constant $D>0$ depending only on $r$, $p$, $q$ and $N$, such that 
\begin{equation}\label{cald_zygm_3}
\|u\|_{W^{2,q}(B_r(x_0))}\leq D\left(
  \|u\|_{L^q(B_{2r}(x_0))}+\|f\|_{L^q(B_{2r}(x_0))}\right) 
\end{equation}
for all $x_0\in\R^N$.
  \item[(ii)] If $f\in L^{p'}(\R^N)\cap L^q(\R^N)$ and $u\in L^q(\R^N)$ for some $q \in (1,\infty)$, then $u\in W^{2,q}(\R^N)$.
\end{itemize}
\end{proposition}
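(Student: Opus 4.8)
The plan is to reduce everything to standard interior Calderón--Zygmund estimates for the Laplacian, once it is established that $u := \cR f$ solves $-\Delta u - u = f$ distributionally. First I would treat $f \in \cS$, where $u = \Phi * f \in \cC^\infty(\R^N)$ and the equation holds classically because $-\Delta \Phi - \Phi = \delta$. For general $f \in L^{p'}(\R^N)$ I would choose $f_n \in \cS$ with $f_n \to f$ in $L^{p'}(\R^N)$; by Theorem~\ref{thm:KRS} the solutions $u_n := \cR f_n$ converge to $u$ in $L^p(\R^N)$, and since $-\Delta u_n - u_n = f_n$, passing to the limit against test functions yields $-\Delta u - u = f$ in $\cD'(\R^N)$. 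Also $u \in L^p(\R^N)$ by Theorem~\ref{thm:KRS}; since $p > 2 > p'$, one has $L^p(\R^N) \embed L^{p'}_{\text{loc}}(\R^N)$, hence $g := u + f \in L^{p'}_{\text{loc}}(\R^N)$ and $-\Delta u = g$ distributionally.

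Next I would invoke the interior $L^{p'}$ Calderón--Zygmund estimate for $-\Delta$: for each fixed $r > 0$ and every $x_0 \in \R^N$,
$$
\|u\|_{W^{2,p'}(B_r(x_0))} \le C\bigl(\|u\|_{L^{p'}(B_{2r}(x_0))} + \|g\|_{L^{p'}(B_{2r}(x_0))}\bigr),
$$
with $C = C(r,p,N)$ independent of $x_0$ by translation invariance. Bounding $\|g\|_{L^{p'}(B_{2r}(x_0))} \le \|u\|_{L^{p'}(B_{2r}(x_0))} + \|f\|_{L^{p'}(\R^N)}$ and, via Hölder's inequality, $\|u\|_{L^{p'}(B_{2r}(x_0))} \le |B_{2r}|^{\frac1{p'}-\frac1p}\|u\|_{L^p(\R^N)}$, this gives \eqref{cald_zygm_2}. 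In particular $u \in W^{2,p'}_{\text{loc}}(\R^N)$, so the identity $-\Delta u - u = f$ holds pointwise a.e. and $u$ is a strong solution.

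For part (i), the added hypotheses $f, u \in L^q_{\text{loc}}(\R^N)$ give $g = u + f \in L^q_{\text{loc}}(\R^N)$, and the interior $L^q$ estimate then yields $u \in W^{2,q}_{\text{loc}}(\R^N)$ together with
$$
\|u\|_{W^{2,q}(B_r(x_0))} \le C\bigl(\|u\|_{L^q(B_{2r}(x_0))} + \|g\|_{L^q(B_{2r}(x_0))}\bigr) \le D\bigl(\|u\|_{L^q(B_{2r}(x_0))} + \|f\|_{L^q(B_{2r}(x_0))}\bigr),
$$
i.e.\ \eqref{cald_zygm_3}, with $D = D(r,p,q,N)$ again independent of $x_0$. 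For part (ii), I would cover $\R^N$ by the unit balls $B_1(z)$, $z \in \Z^N$, whose doubles $B_2(z)$ have bounded overlap (each point lies in at most $\kappa_N$ of them), apply \eqref{cald_zygm_3} with $r = 1$ on each, and sum the $q$-th powers:
$$
\sum_{z \in \Z^N}\|u\|_{W^{2,q}(B_1(z))}^q \le D^q\sum_{z \in \Z^N}\bigl(\|u\|_{L^q(B_2(z))}^q + \|f\|_{L^q(B_2(z))}^q\bigr) \le D^q\kappa_N\bigl(\|u\|_q^q + \|f\|_q^q\bigr) < \infty,
$$
whence $u \in W^{2,q}(\R^N)$.

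I do not expect a genuine obstacle here. The only points needing care are the density argument passing from $\cS$ to $L^{p'}(\R^N)$ in establishing the distributional equation, which is routine given Theorem~\ref{thm:KRS}, and the verification that the constants in the interior estimates are independent of the base point $x_0$; the latter follows from the translation invariance of $-\Delta$ together with the fact that only balls of a single fixed radius are used. The covering/summation step in (ii) is the most laborious part but is entirely standard.
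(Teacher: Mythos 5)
Your overall approach matches the paper's: first establish that $u = \cR f$ solves $-\Delta u - u = f$ distributionally (for $f \in \cS$ via the fundamental solution property of $\Phi$, then by density using Theorem~\ref{thm:KRS}), then upgrade to $W^{2,p'}_{\text{loc}}$ via elliptic estimates, and treat (i), (ii) similarly. Two points merit comment.

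First, when you ``invoke the interior $L^{p'}$ Calder\'on--Zygmund estimate'' directly on $u$, you are eliding a regularity upgrade. The a priori estimate as usually stated (e.g.\ Gilbarg--Trudinger, Theorem~9.11) \emph{assumes} $u \in W^{2,p'}_{\text{loc}}$ and then gives the bound; it does not by itself upgrade $u$ from $L^{p'}_{\text{loc}}$ to $W^{2,p'}_{\text{loc}}$. The paper handles this by mollifying: setting $u_\eps = \rho_\eps * u$, which is smooth and solves $-\Delta u_\eps - u_\eps = f_\eps$, applying the a priori estimate to $u_\eps$, showing the sequence is Cauchy in $W^{2,p'}(B_r(x_0))$, and passing to the limit. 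You should either do this mollification step explicitly or, equivalently, decompose $u$ locally into a Newtonian potential of $\zeta(u+f)$ plus a harmonic remainder; the bare citation of ``the interior estimate'' does not quite close the gap, though the idea is right.

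Second, your treatment of (ii) differs from the paper's. You cover $\R^N$ by unit balls $B_1(z)$, $z \in \Z^N$, apply the local estimate~\eqref{cald_zygm_3} on each, and sum $q$-th powers using bounded overlap. The paper instead applies the \emph{global} Calder\'on--Zygmund estimate on $\R^N$ (Gilbarg--Trudinger, Corollary~9.10) to the mollifications $u_{\eps_n}$, shows the sequence is Cauchy in $W^{2,q}(\R^N)$, and identifies the limit with $u$ via the local convergence already established. Both routes are correct; your covering argument is more elementary (it needs only the interior estimate already proved) and arguably cleaner in that it avoids re-running the mollification machinery, while the paper's route is more direct once the global estimate is available. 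Either way, the proof goes through.
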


\begin{proof}
We first show that 
\begin{equation}
  \label{eq:25}
\text{$-\Delta u -u = f$ in distributional sense.}  
\end{equation}
For this we first
assume that $f\in\cS$. In this case, $\cR f\in\cS'$ is given by
$$
\ps{\cR f}{\varphi}
=\lim\limits_{\eps\to0^+}\int_{\R^N}\varphi(x)\, \cF^{-1}\left(\frac{\widehat{f}(\cdot)}{|\cdot|^2-1-i
  \eps}\right)\, dx 
=\lim\limits_{\eps\to0^+}\int_{\R^N}\frac{\check{\varphi}(\xi)\widehat{f}(\xi)}{|\xi|^2-1-i
  \eps}\, d\xi
$$
for all $\varphi\in\cS$, where, as usual, $\check \phi$ is an abbreviation for $\cF^{-1}(\phi)$.
Hence, setting $u=\cR f$, we obtain for every $\varphi\in\cS$:
\begin{align*}
\ps{-\Delta u-u}{\varphi}&=
\ps{\cR f}{-\Delta\varphi-\varphi}
=\lim_{\eps\to 0^+}\int_{\R^N}\frac{\widehat{f}(\xi)\check
  \varphi(\xi)(|\xi|^2-1)}{|\xi|^2-1-i
  \eps}\, d\xi\\
&=\lim_{\eps\to 0^+}\int_{\R^N}\frac{\widehat{f}(\xi)\check
  \varphi(\xi)(|\xi|^2-1-i \eps)}{|\xi|^2-1-i
  \eps}\, d\xi
=\int_{\R^N}f(x)\varphi(x)\, dx=\ps{f}{\varphi}.
\end{align*}
Here we used the fact that 
$$
\lim_{\eps\to 0^+}\int_{\R^N}\frac{i \eps}{|\xi|^2-1-i \eps}g(\xi)\,
d\xi=0 \qquad \text{for every $g \in \cS$,}
$$
which follows from Lebesgue's Theorem since
$|\frac{i\eps}{|\xi|^2-1-i\eps}|\leq 1$ for every $\xi\in\R^N$,
$\eps>0$ and $\lim\limits_{\eps\to 0^+}\frac{i \eps}{|\xi|^2-1-i \eps}=0$ for $\xi
\in \R^N$ with $|\xi| \not = 1$. Hence we have $-\Delta u -u=f$ in the
distributional sense. Now let $f\in L^{p'}(\R^N)$ and consider a
sequence $(f_n)_n\subset\cS$ with $\|f_n- f\|_{p'}\to 0$ as
$n\to\infty$. Then 
 $u_n:=\cR f_n$ solves $-\Delta u_n-u_n=f_n$ in distributional sense,
 and $u_n \to u$ in $L^p(\R^N)$ by Theorem~\ref{thm:KRS}. 
Consequently, $-\Delta u_n- u_n\to f$ and $u_n\to u$ in $\cS'$ as
$n\to\infty$, so that (\ref{eq:25}) is true.

We now take $x_0\in\R^N$, $r>0$ and consider the mollification $(u_\eps)_{\eps>0}$ of $u:=\cR f$, i.e., $u_\eps:=\rho_\eps\ast u$ 
where $\rho_\eps(x)=\eps^{-N}\rho(\frac{x}{\eps})$, $x\in\R^N$ for some function $\rho\in\cC^\infty_c(\R^N)$ satisfying $\rho(x)\geq 0$, 
for all $x\in\R^N$, $\text{supp}(\rho)\subset B_1$ and $\int_{\R^N}\rho\, dx=1$. Since $u\in L^p(\R^N)$,
we obtain $u\in L^{p'}(B_r(x_0))$ and consequently, $u_\eps\to u$ in $L^{p'}(B_r(x_0))$ as $\eps\to 0^+$. Similarly, considering 
the mollification $(f_\eps)_{\eps>0}$ of $f$, we see that $f_\eps\to f$ in $L^{p'}(\R^N)$ and therefore also in $L^{p'}(B_r(x_0))$, 
as $\eps\to 0^+$. From the properties of the mollification of $L^p$-functions and of tempered distributions, with respect
to differential operators with constant coefficients (see \cite{rudin_fa}), we obtain
$$
-\Delta u_\eps- u_\eps=-\Delta(u\ast\rho_\eps)-(u\ast\rho_\eps)\\
=(-\Delta u - u)\ast\rho_\eps=f\ast\rho_\eps=f_\eps \quad \text{in $\R^N$.}
$$
Therefore, the elliptic regularity theory (see \cite[Theorem 9.11]{gilbarg-trudinger}) shows the existence, for all $r>0$, of some constant 
$C>0$, depending only on $r$, $p$ and $N$, such that
\begin{equation}\label{cald_zygm_1}
\|u_\eps\|_{W^{2,p'}(B_{r}(x_0))}\leq C\left( \|u_\eps\|_{L^{p'}(B_{2r}(x_0))}+\|f_\eps\|_{L^{p'}(B_{2r}(x_0))}\right) 
\quad\text{for all }\eps>0.
\end{equation}
Choosing some sequence $(\eps_n)_n\subset(0,\infty)$ such that $\eps_n\to 0$ as $n\to\infty$ and replacing $u_\eps$ by $u_{\eps_n}-u_{\eps_m}$ 
in \eqref{cald_zygm_1} gives that $(u_{\eps_n})_n$ is a Cauchy sequence in $W^{2,p'}(B_{r}(x_0))$ and therefore, there exists 
$w\in W^{2,p'}(B_r(x_0))$ such that $u_{\eps_n}\to w$ in $W^{2,p'}(B_r(x_0))$ as 
$n\to\infty$. Since this also implies $u_{\eps_n}\to w$ in $L^{p'}(B_r(x_0))$, we find that $w=u$ a.e. in $B_r(x_0)$, 
and it follows that $u\in W^{2,p'}(B_r(x_0))$ and $u$ solves the equation $-\Delta u -u=f$ almost everywhere in 
$B_r(x_0)$. Furthermore, \eqref{cald_zygm_1} gives
\begin{align*}
\|u\|_{W^{2,p'}(B_r(x_0))}&\leq C\left( \|u\|_{L^{p'}(B_{2r}(x_0))}+\|f\|_{L^{p'}(B_{2r}(x_0))}\right)\\
&\leq \tilde{C}\left( \|u\|_{L^p(\R^N)}+\|f\|_{L^{p'}(\R^N)}\right),
\end{align*}
where $\tilde{C}=C\max\{1,[\omega_N(2r)^N]^{\frac{p-2}{p}}\}$ and $\omega_N$ denotes the volume of the unit ball in $\R^N$. 
Since $r>0$ and $x_0\in\R^N$ were arbitrarily chosen, it follows that $u\in W^{2,p'}_{\text{loc}}(\R^N)$ is a strong solution of 
$-\Delta u-u=f$ and, for every $r>0$, there exists a constant
$\tilde{C}>0$ depending only on $r$, $p$ and $N$ such that
(\ref{cald_zygm_2}) holds for all $x_0\in\R^N$.\\
(i) Considering as before the mollifications $(u_\eps)_{\eps>0}$ of $u$ and $(f_\eps)_{\eps>0}$ of $f$, we obtain from the 
previous argument that $-\Delta u_\eps -u_\eps =f_\eps$ on
$\R^N$. Moreover, for all $x_0\in\R^N$ and $r>0$, $u_\eps\to u$ and 
$f_\eps\to f$ in $L^q(B_r(x_0))$ as $\eps\to 0^+$. Using again
elliptic regularity theory and reasoning as above, we find  $u\in W^{2,q}_{\text{loc}}(\R^N)$ and, for every $r>0$, 
the existence of some constant $D$, depending only on $r$, $p, q$ and
$N$ such that \eqref{cald_zygm_3} holds for all $x_0\in\R^N$. \\
(ii) 
As a consequence of (i), there holds $u\in W^{2,q}_{\text{loc}}(\R^N)$ and $u$ solves $-\Delta u -u=f$ a.e. on $\R^N$.
Considering again the mollifications  $(u_\eps)_{\eps>0}$ of $u$ and $(f_\eps)_{\eps>0}$ of $f$ we see, using the Calder\'on-Zygmund
estimate (see \cite[Corollary 9.10]{gilbarg-trudinger}), that for any sequence $(\eps_n)_n\subset(0,\infty)$ such that $\eps_n\to 0$
as $n\to\infty$, the sequence $(u_{\eps_n})_n$ is a Cauchy sequence in $W^{2,q}(\R^N)$. Since the argument in (i) implies 
$u_{\eps_n}\to u$ in $W^{2,q}_{\text{loc}}(\R^N)$, we conclude that $u\in W^{2,q}(\R^N)$.
\end{proof}

\bibliographystyle{plain}

\end{document}